\newcommand{\R}{\mathbb{R}}
\newcommand{\N}{\mathbb{N}}
\DeclareMathOperator{\tr}{\text{tr}}
\newcommand\norm[1]{\left\lVert#1\right\rVert}
\newcommand{\Grad}{\nabla}
\newcommand{\Div}{\operatorname{div}}
\newcommand{\dom}{\Omega}
\newcommand{\CTR}{C_{\mathcal{T}_R}}
\renewcommand{\div}{\operatorname{div}}
\newcommand{\T}{\mathcal T}
\renewcommand{\P}{\mathcal P}
\DeclareMathOperator*{\esssup}{ess\,sup}
\newtheorem{lemma}{Lemma}[section]
\newtheorem{corollary}[lemma]{Corollary}
\newtheorem{theorem}[lemma]{Theorem}
\newtheorem{remark}[lemma]{Remark}
\newtheorem*{maintheorem*}{Main Theorem}
\theoremstyle{definition}{\newtheorem{definition}[lemma]{Definition}}
\theoremstyle{note}{
\newtheorem*{claim*}{Claim}}
\numberwithin{equation}{section}
\title[Maxwell Q-tensor]{A convergent finite element scheme for the Q-tensor model of liquid crystals subjected to an electric field}
\date{\today}
\author[M. Hirsch]{Max Hirsch}
\address[Max Hirsch]{\newline Department of Mathematics \newline University of California, Berkeley \newline Berkeley, CA 94720, USA.}
\email[]{mhirsch@berkeley.edu}
\author[F. Weber]{Franziska Weber}
\address[Franziska Weber]{\newline Department of Mathematics \newline University of California, Berkeley \newline Berkeley, CA 94720, USA.}
\email[]{fweber@berkeley.edu}
\thanks{F.W. and M.H. were partially supported by NSF DMS 2042454. This material is based upon work supported by the National Science Foundation Graduate Research Fellowship Program under Grant No.\ DGE 2146752. Any opinions, findings, and conclusions or recommendations expressed in this material are those of the authors and do not necessarily reflect the views of the National Science Foundation.}
\begin{document}
 \pagenumbering{arabic}
\maketitle
\begin{abstract}
We study the Landau-de Gennes Q-tensor model of liquid crystals subjected to an electric field and develop a fully discrete numerical scheme for its solution. The scheme uses a convex splitting of the bulk potential, and we introduce a truncation operator for the Q-tensors to ensure well-posedness of the problem. We prove the stability and well-posedness of the scheme.  Finally, making a restriction on the admissible parameters of the scheme, we show that up to a subsequence, solutions to the fully discrete scheme converge to weak solutions of the Q-tensor model as the time step and mesh are refined. We then present numerical results computed by the numerical scheme, among which we show that it is possible to simulate the Fr\'{e}edericksz transition with this scheme. 
\end{abstract}
\section{Introduction}
Liquid crystals are a state of matter with properties intermediate to those of solids and liquids. They flow like liquids, but they also possess properties of solids. Namely, their molecules have an orientation as in crystals. These materials generally consist of elongated molecules that can be pictured as rods which align along a common direction due to intermolecular forces~\cite{Virga1995,SonnetVirga2001,Stewart2008}. This microscopic structure affects the mechanical response of the material to stress and strain at the macroscopic level. This makes liquid crystals useful for many engineering applications. For example, there are liquid crystals that react to electric fields and polarize the light going through them. This is taken advantage of in liquid crystal displays (LCDs), smart glasses, and other technologies~\cite{Chang2022,Castellano2005,Loo2006}.
The goal of this work is to introduce and analyze a numerical scheme for a model of liquid crystal subjected to an electric field. The convergence proof will also demonstrate existence of weak solutions of the underlying system of partial differential equations (PDEs). The model that we consider here is based on the Landau and de Gennes Q-tensor theory of liquid crystal dynamics~\cite{Virga1995,deGennes1995}. 

In the Q-tensor model, the orientation of liquid crystals is described by a symmetric, trace-free $d\times d$ matrix $Q$ which is assumed to minimize the Landau-de Gennes free energy
\[
    E_{LG}(Q) = \int_\Omega \mathcal{F}_B(Q) + \mathcal{F}_E(Q) + \mathcal{F}_e(Q),
\]
in equilibrium. Here $\Omega \subset \R^d$ with $d=2,3$ is the spatial domain in which the liquid crystal molecules lie, $\mathcal{F}_e$ is the electrostatic energy density which we discuss below, $\mathcal{F}_B$ is a bulk potential, and $\mathcal{F}_E$ is the elastic energy given by
\[
    \mathcal{F}_B(Q) = \frac{a}{2}\operatorname{tr}(Q^2) - \frac{b}{3}\operatorname{tr}(Q^3) + \frac{c}{4}(\operatorname{tr}(Q^2))^2 + A_0,\quad \mathcal{F}_E(Q) = \frac{L_1}{2}|\Grad Q|^2 + \frac{L_2}{2}|\Div Q|^2 +\frac{L_3}{2}\sum_{i,j,k=1}^d \partial_i Q_{jk}\partial_k Q_{ji},
\]
where $a,b,c,A_0,L_1,L_2,L_3$ are constants with $c,L_1>0$; $L_2+L_3\geq 0$. Here $(\Div Q)_i =\sum_{j=1}^d \partial_j Q_{ij}$ and $|\Grad Q|^2 = \sum_{i,j,k=1}^d (\partial_k Q_{ij})^2$. We use $|\cdot |$ to denote the Euclidean vector norm, the Frobenius norm for matrices, and corresponding generalizations for higher order tensors. Having $c>0$ ensures that the bulk potential is bounded from below~\cite{Zhao2016}, and $A_0$ ensures $\mathcal{F}_B(Q) \ge 0$. We further assume that $\Omega$ is a bounded, simply connected domain with Lipschitz boundary. For the clarity of exposition, we will also assume the one-constant approximation~\cite{Virga1995}, i.e., that $L_1:=L>0$ and $L_2=L_3=0$. The results of this paper can be extended to nonzero $L_2$ and $L_3$ in a straightforward manner, but this assumption simplifies our presentation.

Now non-equilibrium situations are described via the gradient flow
\begin{equation}
\label{eq:gradflow}
    Q_t =  L\Delta Q - \frac{\partial\mathcal{F}_B(Q)}{\partial Q} - \frac{\partial\mathcal{F}_e(Q)}{\partial Q}.
\end{equation}
Here, we have
\[
    \frac{\partial\mathcal{F}_B(Q)}{\partial Q} = aQ - b\left(Q^2 - \frac1d\operatorname{tr}(Q^2)I\right) + c\operatorname{tr}(Q^2)Q.
\]
The gradient flow above will be coupled to another equation related to the electric potential. We turn now to its derivation and the derivation of $\mathcal{F}_e$.

\subsection{Electrostatic Energy Density Derivation}
Liquid crystals interact with externally applied electric fields or self-induce an internal electric field due to dielectric and spontaneous polarization effects. We let $D$ be the electric displacement field and $E$ be the electric field. One may use Maxwell's equations to find the electric field in $\Omega$, which in this case with no free charges are given by
\begin{equation}
\label{eq:Maxwell}
    \begin{split}
        \operatorname{div}D &= 0,\\
        \operatorname{curl}E &= 0.
    \end{split}
\end{equation}
As the time scale of the electromagnetic waves is much faster than the one of the liquid crystal molecule motion, it makes sense to consider the stationary Maxwell equations. The equations for the magnetic field can be included also, however, they decouple if the liquid crystal material is not susceptible to magnetic fields and for this reason, we will not consider them here. 
Since $\Omega$ is simply connected, it follows that we may write $E = \nabla u$, where $u$ is a scalar potential. In the setting of nematic liquid crystals, we have
\begin{equation*}
    D = \varepsilon_0 \varepsilon E + P_s,
\end{equation*}
where $\varepsilon = \Delta\varepsilon^\ast Q + \overline{\varepsilon}I$ and
\begin{equation*}
    P_s = \overline{e}\operatorname{div}Q
\end{equation*}
is the spontaneous polarization to the leading order~\cite{mottram2014}. Defining the constants $\varepsilon_1 = \varepsilon_0\overline{\varepsilon}$, $\varepsilon_2 = \varepsilon_0\Delta\varepsilon^\ast$, and $\varepsilon_3 = \overline{e}$, we can rewrite $D$ as 
\begin{equation*}
    D = \varepsilon_1\nabla u + \varepsilon_2 Q\nabla u + \varepsilon_3\operatorname{div}Q,
\end{equation*}
so with (\ref{eq:Maxwell}) we obtain
\begin{equation}\label{eq:ellipticEfield}
    \operatorname{div}(\varepsilon_1\nabla u + \varepsilon_2 Q\nabla u + \varepsilon_3\operatorname{div}Q) = 0.
\end{equation}
Without loss of generality, we assume that $\varepsilon_1>0$ in the following.
Then the electrostatic energy density is given by
\begin{equation*}
\begin{split}
    \mathcal{F}_e(Q) 
        &= -\int (\varepsilon_0 \varepsilon E + P_s)\cdot \, d{E} = -\frac12 \varepsilon_0 (\varepsilon E)\cdot E - P_s\cdot E\\
        &= -\frac12 \varepsilon_0(\Delta\varepsilon^\ast Q + \overline{\varepsilon}I)\nabla u \cdot \nabla u - \overline{e}\operatorname{div}(Q)\cdot\nabla u\\
        &= -\frac{\varepsilon_1}{2} \lvert \nabla u\rvert^2 -\frac{\varepsilon_2}{2} \nabla u^\top Q\nabla u - \varepsilon_3\operatorname{div}(Q)\cdot\nabla u.
\end{split}
\end{equation*}
Then~\eqref{eq:gradflow} becomes
\begin{equation*}
  Q_t =  L\Delta Q - \frac{\partial \mathcal{F}_B(Q)}{\partial Q} + \frac{\varepsilon_2}{2}\left[\nabla u\nabla u^\top - \frac1d|\Grad u|^2I\right]- \varepsilon_3\left[\nabla^2u - \frac1d\Delta uI\right],
\end{equation*}
and the coupled system for the Q-tensor and the electric field with~\eqref{eq:ellipticEfield} becomes
\begin{subequations}\label{eq:fulluntruncatedpde}
	\begin{align}
	Q_t &=  L\Delta Q - \frac{\partial \mathcal{F}_B(Q)}{\partial Q} + \frac{\varepsilon_2}{2}\left[\nabla u\nabla u^\top - \frac1d|\Grad u|^2 I\right]- \varepsilon_3\left[\nabla^2u - \frac1d\Delta uI\right]  ,\\ 0&=\operatorname{div}(\varepsilon_1\nabla u + \varepsilon_2 Q\nabla u + \varepsilon_3\operatorname{div}Q),\label{seq:elliptic1}
	\end{align}
\end{subequations}
with suitable boundary conditions. The elliptic equation in~\eqref{seq:elliptic1} is only coercive (and thus solvable given $Q$) if 
\begin{equation}\label{eq:epsbound}
|\varepsilon_1|> |\varepsilon_2| |Q|,
\end{equation} 
for almost every $(t,x)\in [0,T]\times\Omega$. From a physical perspective, the eigenvalues of $Q$ should be in the range $[-1/d,2/d]$ (see, for example, the derivation of the $Q$-tensor model in the book by Virga~\cite{Virga1995}). If this is indeed the case, then under the condition $d |\varepsilon_1|>2|\varepsilon_2|$, the elliptic equation~\eqref{seq:elliptic1} is coercive and solvable.  
For the pure gradient flow (without electric field), it has been shown for smooth solutions that if the eigenvalues of the Q-tensor are initially within a certain range, this is preserved for the time evolution~\cite{Wu2019,Contreras2019}. Those results also apply in the more general case of the Beris-Edwards system (liquid crystal Q-tensor coupled to fluid flow) if solutions are smooth. However, it is unknown whether similar estimates can be proved for the system~\eqref{eq:fulluntruncatedpde}, and in fact, it is not even clear that $Q$ would remain bounded. We emphasize that even though in the pure gradient flow case there have been numerical schemes that maintain $L^\infty$ bounds on the solution via stabilization terms \cite{boundpreserving2017,boundpreserving2025}, it is still not clear that the equation with electric field \eqref{eq:fulluntruncatedpde} has solutions which remain bounded. Our numerical experiments in Section~\ref{sec:num} indicate that $Q$ may not stay within the range $[-1/d,2/d]$ in some cases and may grow without bound.
Thus it is unclear whether~\eqref{eq:fulluntruncatedpde} is a practical model for liquid crystal dynamics under the influence of an electric field. Therefore, to ensure well-posedness of our problem, we introduce a bounded, twice continuously differentiable mapping $\T_R:\R^{d\times d}\to\R^{d\times d}$ that satisfies
\begin{equation}
	\label{eq:conditionsonTr}
	\mathcal{T}_R(Q)_{ij}\in \left[-\frac{R}{d},\frac{R}{d}\right],\quad \left|\frac{\partial\mathcal{T}_R(Q)_{ij}}{\partial Q}\right|\leq \CTR ,\quad \left|\frac{\partial^2\mathcal{T}_R(Q)_{ij}}{\partial Q^2}\right|\leq \CTR\quad i,j=1,\dots, d
\end{equation}
for some $R>0$ to be chosen, and where $0<\CTR<\infty$. What we have in mind, is a function that increases approximately linear in each component between $[-R/d+\epsilon,R/d-\epsilon]$ and smoothly truncates values outside the physical range. For example, we could choose $\mathcal{T}_R(Q)$ to be a smooth, componentwise approximation of a `Heaviside function':
\begin{equation*}
	\mathcal{T}_R(Q)_{ij}= \frac{2R}{\pi d}\arctan\left(\frac{d Q_{ij}}{R}\right).
\end{equation*}
Many other choices of functions are possible here, c.f.,~\eqref{eq:TRnumerics} -- \eqref{eq:TRnumerics2}, which is used in our numerical experiments section.
Now we obtain the modified elliptic equation
\begin{equation}
    \operatorname{div}(\varepsilon_1\nabla u + \varepsilon_2 \T_R(Q)\nabla u + \varepsilon_3\operatorname{div}Q) = 0
\end{equation}
and the modified electrostatic energy density
\begin{equation}
    \tilde{\mathcal{F}}_e(Q) = -\frac{\varepsilon_1}{2} \lvert \nabla u\rvert^2 -\frac{\varepsilon_2}{2} \nabla u^\top \T_R(Q)\nabla u - \varepsilon_3\operatorname{div}(Q)\cdot\nabla u.
\end{equation}
 Note that it follows from our assumption~\eqref{eq:conditionsonTr} that $|\T_R(Q)| \le R$. This way, we can ensure that the elliptic equation remains solvable under constraints on $\varepsilon_1$ and $\varepsilon_2$ that depend on $R$. 

Now to determine the evolution PDE for the Q-tensor, we compute the variational derivative of this modified electrostatic energy,
\[
    \int_\Omega \tilde{\mathcal{F}}_e(Q)\, dx.
\]
Letting $\phi:\Omega\times[0,T) \to \R^{d\times d}$ be smooth and compactly supported in $\Omega$, we compute
\begin{align*}
    &\left.\frac{d}{d\delta}\right\lvert_{\delta=0} \int_\Omega \tilde{\mathcal{F}}_e(Q+\delta\phi)\, dx\\
        &= \left.\frac{d}{d\delta}\right\lvert_{\delta=0} \int_\Omega \left(-\frac{\varepsilon_1}{2} \lvert \nabla u\rvert^2 -\frac{\varepsilon_2}{2} \nabla u^\top \T_R(Q+\delta\phi)\nabla u - \varepsilon_3\operatorname{div}(Q+\delta\phi)\cdot\nabla u\right)\, dx\\
        &= \int_\Omega \left(-\frac{\varepsilon_2}{2} \nabla u^\top\left(\frac{\partial\T_R(Q)}{\partial Q}\odot\phi\right)\nabla u - \varepsilon_3\operatorname{div}(\phi)\cdot\nabla u\right)\, dx\\
        &= \int_\Omega \left(-\frac{\varepsilon_2}{2} \left(\frac{\partial\T_R(Q)}{\partial Q}\odot \nabla u\nabla u^\top\right):\phi + \varepsilon_3\nabla^2 u:\phi\right)\, dx.
\end{align*}
Here we denoted by $\odot$ the Hadamard product $(A\odot B)_{ij} = A_{ij} B_{ij}$.
We will also denote $\P(Q) := \frac{\partial\T_R(Q)}{\partial Q}$. Note that $\P$ depends on the parameter $R$ which is omitted in the notation for simplicity. 
Thus, the complete system of equations, along with boundary conditions, is given by
\begin{subequations}
\label{eq:system_strong_formulation}
\begin{align}
    Q_t = L\Delta Q - \frac{\partial \mathcal{F}_B(Q)}{\partial Q} + \frac{\varepsilon_2}{2}\left[\P(Q)\odot\nabla u\nabla u^\top - \frac1d\operatorname{tr}\left(\P(Q)\odot\nabla u\nabla u^\top\right)I\right]- \varepsilon_3\left[\nabla^2u - \frac1d\Delta uI\right],\\
    \div(\varepsilon_1\nabla u + \varepsilon_2 \T_R(Q)\nabla u + \varepsilon_3\div(Q)) = 0,\label{seq:elliptic}\\
    u = g \text{ on } \partial\Omega,\\
    Q = q \text{ on } \partial \Omega.
\end{align}
\end{subequations}
Here $g=g(t,x)$ and $q=q(x)$ are the Dirichlet boundary conditions for $u$ and $Q$ respectively.
The elliptic equation~\eqref{seq:elliptic} can be derived as the variational derivative of the electrostatic energy with respect to $u$. 

To the best of our knowledge, existence and uniqueness of solutions for this system is unknown. Due to the nonlinearities, smooth solutions may not be expected in general. We therefore proceed to define weak solutions: Fix $T>0$ and assume that $g$ admits an extension $\tilde g: [0,T]\times\Omega\to\R$ with $\tilde g \in L^\infty([0,T];H^1(\Omega))\cap W^{1,2}([0,T];H^1(\Omega))$. Further assume that $q$ admits an extension $\tilde q:\Omega\to\R^{d\times d}$ with $\tilde q\in (H^1(\Omega))^{d\times d}$ which is trace-free and symmetric. Also, assume that $Q_0:\Omega\to\R^{d\times d} \in (H^1(\Omega))^{d\times d}$ takes values in the symmetric, trace-free $d\times d$ matrices. Denote 
$$
(A)_S:=\frac{A+A^\top}{2}
$$
the symmetric part of a matrix $A\in \R^{d\times d}$.
Then we define weak solutions as follows:
\begin{definition}\label{def:weaksol}
A pair $(Q,u)$ with $Q:[0,T]\times\Omega\to\R^{d\times d}$ trace-free and symmetric and $u:[0,T]\times\Omega\to\R$ is called a weak solution of \eqref{eq:system_strong_formulation} if
\[
    \tilde Q\in L^\infty(0,T;H_0^1(\Omega)),\quad \tilde Q_t\in L^2([0,T]\times\Omega),\quad \tilde u \in L^\infty(0,T;H_0^1(\Omega)),
\]
with $Q = \tilde Q + \tilde q$ and $u = \tilde u + \tilde g$, and
\begin{subequations}
\label{eq:weak_formulation}
\begin{equation}
\begin{split}
    &\int_0^T \int_\Omega Q:\Phi_t\, dx\, dt + \int_\Omega Q_0(x):\Phi(0,x)\, dx\\ 
    &= L\int_0^T\int_\Omega \nabla Q : \nabla \Phi\, dx\, dt + \int_0^T\int_\Omega \frac{\partial\mathcal{F}_B(Q)}{\partial Q}:\Phi\, dx\, dt\\ 
    &\hspace{10ex}- \frac{\varepsilon_2}{2}\int_0^T\int_\Omega \left(\P(Q)\odot\nabla u\nabla u^\top - \frac1d\operatorname{tr}(\P(Q)\odot\nabla u\nabla u^\top)I\right):\Phi\, dx\, dt,\\
    &\hspace{10ex}-\varepsilon_3 \int_0^T\int_\Omega \left(\nabla u\cdot\div((\Phi)_S) - \frac1d\nabla u\cdot\nabla\tr\Phi\right)\, dx\, dt\\
\end{split}
\end{equation}
\begin{equation}
\label{eq:div_weak_form}
    \int_0^T\int_\Omega (\varepsilon_1\nabla u + \varepsilon_2 \T_R(Q)\nabla u + \varepsilon_3\div Q)\cdot\nabla \psi\, dx\, dt = 0
\end{equation}
\end{subequations}
for all smooth $\Phi = (\Phi_{ij})_{ij=1}^{d} : [0,T]\times\Omega\to\mathbb{R}^{d\times d}$ and $\psi: [0, T]\times\Omega\to\mathbb{R}$ which are compactly supported within $[0,T)\times\Omega$.

\end{definition}

In this work, we will construct a fully discrete numerical scheme based on a finite element discretization in space for system~\eqref{eq:system_strong_formulation}. We will prove uniform stability with respect to the discretization parameters, and in the case $\varepsilon_3=0$ (effect of polarization is zero), convergence to a weak solution as in Definition~\ref{def:weaksol} as the discretization parameters vanish. Thus, the convergence proof also implies the existence of a weak solution to~\eqref{eq:system_strong_formulation} when $\varepsilon_3=0$. The convergence in the case with nonzero polarization introduces additional mathematical challenges and we therefore leave it for future research. 

\subsection{Related works}
There have been a couple of numerical works investigating the dynamics of liquid crystals under the influence of external fields. Most rigorous results with error and/or convergence analyses consider equilibrium situations or explicit solutions in simplified situations, e.g.,~\cite{Nochetto2018,Berezin1973,Davis1998,Helfrich1973,Zhou2015,Borthagaray2021}. Most works for nonequilibrium situations are mainly experimental, without stability or convergence guarantees,~\cite{Aursand2016,Tovkach2017,MacDonald2020,Lee2002,Mori1999,Luckhurst2003}.
An exception is~\cite{Schimming2021}, however in that model, the electric field is given and not computed through a PDE like in~\eqref{eq:system_strong_formulation}, and in~\cite{weber2021convergent}, a convergent numerical scheme for the director field model by Oseen and Frank with an elliptic equation for the electric field was designed and analyzed.
Energy-stable and convergent numerical schemes for the Q-tensor gradient flow (without electric field) have been designed and analyzed in~\cite{Zhao2016,Shen2019,Cai2017,gudibanda_weber_yue_2022,Yue2023}. To the best of our knowledge, the present work is the first proving convergence of a fully discrete scheme for a coupled Q-tensor model under the influence of an electric field.

\subsection{Outline} The structure of this article is as follows: In Section~\ref{sec:numscheme}, we present the numerical scheme for~\eqref{eq:system_strong_formulation} and prove basic properties such as well-posedness, energy-stability, and preservation of the symmetry and trace-free constraint of $Q$ at the discrete level. In Section~\ref{sec:conv}, we show that the scheme converges up to a subsequence to a weak solution of~\eqref{eq:system_strong_formulation} when $\varepsilon_3 = 0$. Then in the last section, Section~\ref{sec:num}, we present numerical experiments for the scheme. We illustrate that numerically, the truncation operator $\T_R$ is necessary for certain boundary conditions for the elliptic equation, since in this case the norms of the approximations of $Q$ grow over time without bound, which would cause the elliptic equation to become ill-posed. One of the experiments also shows that with this model, it is possible to simulate the Fr\'{e}edericksz transition~\cite{Freedericksz1927}, a phase transition commonly observed in liquid crystals that interact with electric or magnetic fields.


\section{The numerical scheme}\label{sec:numscheme}
We start by introducing the numerical scheme to discretize~\eqref{eq:system_strong_formulation}. To obtain a suitable time discretization, we will use convex splitting for the bulk potential. 

\subsection{Convex Splitting}
We rewrite the Landau-de Gennes bulk potential $\mathcal{F}_B:\R^{d\times d}\to \R$ as
\[
    \mathcal{F}_B(Q) = \frac{a}{2}\langle Q, Q\rangle_F - \frac{b}{3}\langle Q^2, Q\rangle_F + \frac{c}{4}(\langle Q, Q\rangle_F)^2 + A_0,
\]
where we used $\langle\cdot, \cdot\rangle_F$ to denote the Frobenius inner product: $\langle A,B\rangle_F:= \tr(A^\top B)$.
When $Q$ is symmetric, this is equal to
\[
    \mathcal{F}_B(Q) = \frac{a}{2}\tr(Q^2) - \frac{b}{3}\tr(Q^3) + \frac{c}{4}(\tr(Q^2))^2 + A_0.
\]
The following lemma is similar to Lemma 2.4 in \cite{Zhao2016}. We modify its proof here to make its statement more precise.

\begin{lemma}[Convex Splitting of $\mathcal{F}_B$]
We can write this bulk potential as
\[
    \mathcal{F}_B(Q) = \mathcal{F}_1(Q) - \mathcal{F}_2(Q),
\]
where
\begin{equation*}
    \begin{split}
        \mathcal{F}_1(Q) &= \frac{\beta_1}{2}\langle Q, Q\rangle_F - \frac{b}{3}\langle Q^2, Q\rangle_F + \frac{\beta_2}{4}(\langle Q, Q\rangle_F)^2 + A_0,\\
        \mathcal{F}_2(Q) &= \frac{\beta_1 - a}{2}\langle Q, Q\rangle_F + \frac{\beta_2-c}{4}(\langle Q, Q\rangle_F)^2.
    \end{split}
\end{equation*}
If $\beta_1 \ge \max\{|b|,a\}$ and $\beta_2\ge\max\{|b|,c\}$, then $\mathcal{F}_1(Q)$ and $\mathcal{F}_2(Q)$ are convex functions.
\end{lemma}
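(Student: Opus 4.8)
The plan is to verify convexity through the second-order directional derivative test. Both $\mathcal{F}_1$ and $\mathcal{F}_2$ are polynomials in the entries of $Q$, hence $C^\infty$, and since in the scheme $Q$ ranges over the linear (in particular convex) space of symmetric trace-free $d\times d$ matrices, on which $\langle Q^2,Q\rangle_F=\tr(Q^3)$, it suffices to show that for every symmetric $Q$ and every symmetric direction $H$ the Hessian form $\tfrac{d^2}{ds^2}\big|_{s=0}\mathcal{F}_i(Q+sH)$ is nonnegative.

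The function $\mathcal{F}_2$ is the straightforward one. Writing $\mathcal{F}_2(Q)=\tfrac{\beta_1-a}{2}\lvert Q\rvert^2+\tfrac{\beta_2-c}{4}\big(\lvert Q\rvert^2\big)^2$, the hypotheses $\beta_1\ge a$ and $\beta_2\ge c$ make both coefficients nonnegative, so it is enough to observe that $Q\mapsto\lvert Q\rvert^2$ is convex (Hessian form $2\lvert H\rvert^2$) and that $Q\mapsto\big(\lvert Q\rvert^2\big)^2$ is convex, being the composition of the nondecreasing convex map $t\mapsto t^2$ on $[0,\infty)$ with the nonnegative convex map $Q\mapsto\lvert Q\rvert^2$ (equivalently, its Hessian form computes directly to $8\langle Q,H\rangle_F^2+4\lvert Q\rvert^2\lvert H\rvert^2\ge0$); a nonnegative combination of convex functions is convex.

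For $\mathcal{F}_1$ I would compute the Hessian form term by term. The quadratic and quartic pieces contribute $\beta_1\lvert H\rvert^2$ and $2\beta_2\langle Q,H\rangle_F^2+\beta_2\lvert Q\rvert^2\lvert H\rvert^2$ by the same elementary second-variation computations as above (now with coefficients $\beta_1,\beta_2$). For the cubic term, expanding $\tr\big((Q+sH)^3\big)=\tr(Q^3)+3s\,\tr(Q^2H)+3s^2\,\tr(QH^2)+s^3\,\tr(H^3)$ by cyclicity of the trace shows that $-\tfrac{b}{3}\tr\big((Q+sH)^3\big)$ contributes $-2b\,\tr(QH^2)$, so
\[
\frac{d^2}{ds^2}\Big|_{s=0}\mathcal{F}_1(Q+sH)=\beta_1\lvert H\rvert^2-2b\,\tr(QH^2)+2\beta_2\langle Q,H\rangle_F^2+\beta_2\lvert Q\rvert^2\lvert H\rvert^2 .
\]
Discarding the nonnegative term $2\beta_2\langle Q,H\rangle_F^2$ and bounding $\lvert\tr(QH^2)\rvert=\lvert\langle Q,H^2\rangle_F\rvert\le\lvert Q\rvert\,\lvert H^2\rvert\le\lvert Q\rvert\,\lvert H\rvert^2$ (Cauchy--Schwarz for the Frobenius inner product together with $\lVert H^2\rVert_F\le\lVert H\rVert_{\mathrm{op}}\lVert H\rVert_F\le\lvert H\rvert^2$), the right-hand side is bounded below by $\lvert H\rvert^2\big(\beta_2\lvert Q\rvert^2-2\lvert b\rvert\,\lvert Q\rvert+\beta_1\big)$. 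Regarding $\beta_2 t^2-2\lvert b\rvert t+\beta_1$ as a quadratic in $t=\lvert Q\rvert\ge0$ with positive leading coefficient, its discriminant $4b^2-4\beta_1\beta_2$ is $\le0$ precisely because $\beta_1\beta_2\ge\lvert b\rvert^2$, which follows from $\beta_1\ge\lvert b\rvert$ and $\beta_2\ge\lvert b\rvert$; hence the quadratic is everywhere nonnegative and the Hessian form of $\mathcal{F}_1$ is positive semidefinite.

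The only genuine work is the cubic term: extracting its second variation cleanly and then controlling $\tr(QH^2)$ by $\lvert Q\rvert\,\lvert H\rvert^2$, so that the remaining scalar inequality collapses to the sign of a one-variable quadratic in $\lvert Q\rvert$; this is exactly where the coupling of the two conditions $\beta_1\ge\lvert b\rvert$ and $\beta_2\ge\lvert b\rvert$ (ensuring the discriminant is nonpositive) is used. Everything else is routine bookkeeping.
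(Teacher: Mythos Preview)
Your argument is essentially the paper's: both compute the Hessian quadratic form, drop the nonnegative term $2\beta_2\langle Q,H\rangle_F^2$, bound the cubic contribution by $2\lvert b\rvert\,\lvert Q\rvert\,\lvert H\rvert^2$ via Cauchy--Schwarz, and reduce to the scalar inequality $\beta_2\lvert Q\rvert^2-2\lvert b\rvert\,\lvert Q\rvert+\beta_1\ge0$; the paper verifies this by observing it is $\ge\lvert b\rvert(1-\lvert Q\rvert)^2$, while you use the equivalent discriminant test $\beta_1\beta_2\ge b^2$. The one substantive caveat is your restriction to symmetric $Q$ and $H$: the lemma as stated concerns $\mathcal{F}_1,\mathcal{F}_2:\R^{d\times d}\to\R$, and in fact the paper later \emph{uses} convexity of $\mathcal{F}_1$ on nonsymmetric matrices (in the symmetry-preservation proof one applies monotonicity of $\partial\mathcal{F}_1/\partial Q$ between $\widehat{Q}_h^{n+1}$ and $(\widehat{Q}_h^{n+1})^\top$ \emph{before} symmetry is established). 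For general $Q$ one has $\langle Q^2,Q\rangle_F=\tr((Q^\top)^2Q)\ne\tr(Q^3)$, so your clean trace expansion does not directly apply; the paper avoids this by computing the Hessian entrywise in index notation (yielding six mixed terms from the cubic, each bounded by $\lvert Q\rvert_F\lvert H\rvert_F^2$ exactly as you do for the single term $\tr(QH^2)$). Your route is correct on the symmetric subspace and morally identical, but to match the lemma and its downstream use you should either drop the symmetry restriction and redo the cubic second variation for $\langle Q^2,Q\rangle_F$ on all of $\R^{d\times d}$, or note explicitly that the same Cauchy--Schwarz bound applies to each of the resulting terms.
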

\begin{proof}
We begin by showing that $\mathcal{F}_2$ is convex by showing its Hessian is positive semidefinite. In what follows, we will use Einstein summation convention. We have
\begin{align*}
    \frac{\partial\mathcal{F}_2}{\partial Q_{ij}}
        &= \frac{\beta_1-a}{2}(\langle Q, E_{ij}\rangle_F + \langle E_{ij}, Q\rangle_F) + \frac{\beta_2-c}{2}\langle Q, Q\rangle_F(\langle Q,E_{ij}\rangle_F + \langle E_{ij}, Q\rangle_F)\\
        &= (\beta_1 - a)Q_{ij} + (\beta_2 - c)Q_{ks}Q_{ks}Q_{ij}.
\end{align*}
Thus we have the entries of the Hessian are given by
\begin{align*}
    \frac{\partial^2\mathcal{F}_2}{\partial Q_{mn}\partial Q_{ij}}
        &= (\beta_1-a)\delta_{im}\delta_{jn} + (\beta_2-c)(2Q_{mn}Q_{ij} + \delta_{im}\delta_{jn}Q_{ks}Q_{ks}).
\end{align*}
Multiplying this expression by $x_{mn}$ and $x_{ij}$ and summing over $i,j,m,n$, we have
\begin{align*}
    x_{mn}\frac{\partial^2\mathcal{F}_2}{\partial Q_{mn}\partial Q_{ij}}x_{ij}
        &= (\beta_1-a)x_{mn}x_{mn} + (\beta_2-c)(2Q_{mn}x_{mn}Q_{ij}x_{ij} + x_{mn}x_{mn}Q_{ks}Q_{ks}) \ge 0,
\end{align*}
where in the last inequality we have used $\beta_1 \ge a$ and $\beta_2 \ge c$. This shows that $\mathcal{F}_2(Q)$ is convex. Now we show that $\mathcal{F}_1(Q)$ is convex. We begin by computing
\begin{align*}
    \frac{\partial\mathcal{F}_1}{\partial Q_{ij}}
        &= \frac{\beta_1}{2}(\langle E_{ij}, Q\rangle_F + \langle Q, E_{ij}\rangle_F) - \frac{b}{3}\left(\left\langle \frac{\partial}{\partial Q_{ij}} Q^2, Q\right\rangle_F + \langle Q^2, E_{ij}\rangle_F\right)\\ 
        &\hspace{10ex}+ \frac{\beta_2}{2}\langle Q, Q\rangle_F(\langle E_{ij}, Q\rangle_F + \langle Q, E_{ij}\rangle_F)\\
        &= \beta_1 Q_{ij} - \frac{b}{3}(Q_{ik}Q_{jk} + Q_{ki}Q_{kj} + Q_{ik}Q_{kj}) + \beta_2 Q_{ks}Q_{ks}Q_{ij}.
\end{align*}
Now computing the entries of the Hessian, we have 
\begin{align*}
    \frac{\partial^2\mathcal{F}_1}{\partial Q_{mn}\partial Q_{ij}}
        &= \beta_1 \delta_{im}\delta_{jn} - \frac{b}{3}\bigg(\delta_{im}\delta_{kn}Q_{jk} + \delta_{jm}\delta_{kn}Q_{ik} + \delta_{km}\delta_{in}Q_{kj}\bigg.\\ 
        &\hspace{5ex}\bigg.+ \delta_{km}\delta_{jn}Q_{ki} + \delta_{im}\delta_{kn}Q_{kj} + \delta_{km}\delta_{jn}Q_{ik}\bigg) + \beta_2(2\delta_{km}\delta_{sn}Q_{ks}Q_{ij} + \delta_{im}\delta_{jn}Q_{ks}Q_{ks})\\
        &= \beta_1\delta_{im}\delta_{jn} - \frac{b}{3}(\delta_{im}Q_{jn} + \delta_{jm}Q_{in} + \delta_{in}Q_{mj} + \delta_{jn}Q_{mi} + \delta_{im}Q_{nj} + \delta_{jn}Q_{im})\\ 
        &\hspace{5ex}+ \beta_2(2Q_{mn}Q_{ij} + \delta_{im}\delta_{jn}Q_{ks}Q_{ks}).
\end{align*}
Multiplying this by $x_{mn}$ and $x_{ij}$ and summing over $i,j,m,n$ gives
\begin{align*}
    x_{mn}\frac{\partial^2\mathcal{F}_1}{\partial Q_{mn}\partial Q_{ij}}x_{ij}
        &= \beta_1 x_{mn}x_{mn} - \frac{b}{3}\bigg(x_{mn}x_{mj}Q_{jn} + x_{mn}x_{im}Q_{in} + x_{mn}x_{nj}Q_{mj}\bigg.\\ 
        &\hspace{20ex}+ \bigg.x_{mn}x_{in}Q_{mi} + x_{mn}x_{mj}Q_{nj} + x_{mn}x_{in}Q_{im}\bigg)\\ 
        &\hspace{5ex}+ \beta_2(2Q_{mn}x_{mn}Q_{ij}x_{ij} + x_{mn}x_{mn}Q_{ks}Q_{ks}).
\end{align*}
Note that
\begin{align*}
    x_{mn}Q_{jn}x_{mj} = \sum_m\sum_n\sum_j x_{mn}Q_{jn}x_{mj} = \sum_{j,n} Q_{jn}\sum_{m}x_{mn}x_{mj} = \sum_{j,n} Q_{jn} B_{jn} \le \rvert Q\lvert_F\lvert B\rvert_F,
\end{align*}
where $B_{jn} := \sum_m x_{mn}x_{mj}$ and where we have applied the Cauchy-Schwarz inequality. Using the Cauchy-Schwarz inequality once more for $B$, we have
\begin{equation*}
    \lvert B\rvert_F = \left(\sum_{j,n} B_{jn}^2\right)^{1/2} \le \left(\sum_{j,n} \left(\sum_{m_1} x_{m_1n}^2\right)\left(\sum_{m_2} x_{m_2j}^2\right)\right)^{1/2}\\ 
    = \sum_{m,n} x_{mn}^2 = x_{mn}x_{mn},
\end{equation*}
and thus we have
\[
    \lvert x_{mn}Q_{jn}x_{mj}\rvert \le \lvert Q\rvert_Fx_{mn}x_{mn}.
\]
We can similarly bound the other terms of this form by $\lvert Q\rvert_Fx_{mn}x_{mn}$. Thus, using this bound and dropping the term $2Q_{mn}x_{mn}Q_{ij}x_{ij} \ge 0$, we obtain
\begin{align*}
    x_{mn}\frac{\partial^2\mathcal{F}_1}{\partial Q_{mn}\partial Q_{ij}}x_{ij}
        &\ge \beta_1 x_{mn}x_{mn} - 2\lvert b\rvert\lvert Q\rvert_Fx_{mn}x_{mn}+ \beta_2x_{mn}x_{mn}\lvert Q\rvert_F^2\\
        &\ge \lvert b\rvert x_{mn}x_{mn}(1 - \lvert Q\rvert_F)^2\ge 0,
\end{align*}
since $\beta_1,\beta_2 \ge \lvert b\rvert$. This shows that $\mathcal{F}_1(Q)$ is convex, as desired.
\end{proof}

\subsection{Definition of the Numerical Scheme}
Now we are in a position to define the numerical scheme. To discretize in time, we will use a fully implicit discretization and for the spatial discretization, we will use finite elements. We let $\Delta t>0$ and denote $t^n := n\Delta t$. Also, $N\in\N$ is such that $T = t^N$ for a given final time of computation $T>0$. Then we let $\mathfrak{T}_h = \{K_\alpha\}_{\alpha\in\Lambda}$ be a quasi-uniform triangulation of $\Omega$ with mesh size $h$. Each $K$ is a closed subset of $\Omega$. We assume for simplicity that $\Omega$ is a convex polygon/polyhedron so that we do not need to deal with the approximation of curved boundaries. When $\Omega$ is 2-dimensional, each $K$ is a triangle, while when $\Omega$ is 3-dimensional they are tetrahedrons. We define the spaces 
\begin{subequations}
	\begin{align}
	\mathbb{X}_h &= \{f_h \in H_0^1(\Omega; \R^{d\times d}): \left.(f_h)_{ij}\right\rvert_K \in P_1(K) \text{ for every } K\in\mathfrak{T}_h,\, 1\le i,j\le d\},\\
	\mathbb{Y}_h &= \{v_h \in H_0^1(\Omega): \left.v_h\right\rvert_K \in P_1(K) \text{ for every } K\in\mathfrak{T}_h\}.
	\end{align}
\end{subequations}
We let $\tilde g^n= \tilde g(\Delta t n)$ and denote by $\tilde g_h^n$ the $L^2(\Omega)$-orthogonal projection of the extension of the boundary data $\tilde g^n$ onto continuous and piecewise linear functions on $\mathfrak{T}_h$. We analogously define $\tilde q_h$ to be the $L^2(\Omega)$-orthogonal projection of the extension of the boundary data $\tilde q$ onto continuous, piecewise linear and matrix valued functions on $\mathfrak{T}_h$. Now the fully discrete formulation of the numerical scheme is as follows: 

Find $\tilde Q_h^n \in \mathbb{X}_h$ and $\tilde u_h^n \in \mathbb{Y}_h$ for $n = 1,\dots,N$ such that
\begin{subequations}
	\label{eq:fully_discrete_scheme_definition}
	\begin{equation}\label{eq:Qfullydiscrete}
	\begin{split}
	- \int_\Omega D_t^+ Q_h^n : \Phi_h\, dx &= L \int_\Omega \nabla Q_h^{n+1/2}:\nabla\Phi_h\, dx +  \int_\Omega \left(\frac{\partial\mathcal F_1(Q_h^{n+1})}{\partial Q} - \frac{\partial\mathcal F_2(Q_h^n)}{\partial Q}\right):\Phi_h\, dx\\ 
	&- \frac{\varepsilon_2}{2} \int_\Omega\left((\tilde\P_h^n\odot \nabla u_h^n(\nabla u_h^{n+1})^\top)_S - \frac1d\operatorname{tr}(\tilde\P_h^n\odot \nabla u_h^n(\nabla u_h^{n+1})^\top)I\right):\Phi_h\, dx\\ 
	&- \varepsilon_3 \int_\Omega \left(\nabla u_h^{n+1/2}\cdot\operatorname{div}((\Phi_h)_S) - \frac1d\nabla u_h^{n+1/2}\cdot\nabla\operatorname{tr}\Phi_h\right)\, dx,
	\end{split}
	\end{equation}
	\begin{equation}\label{eq:ufullydiscrete}
	\int_\Omega (\varepsilon_1\nabla u_h^{n} + \varepsilon_2 \T_R(Q_h^n)\nabla u_h^{n} + \varepsilon_3\operatorname{div}(Q_h^{n}))\cdot\nabla\psi_h\, dx = 0
	\end{equation}
\end{subequations}
for all $\Phi_h \in \mathbb{X}_h$ and $\psi_h \in \mathbb{Y}_h$, where $Q_h^n = \tilde Q_h^n + \tilde q_h$ and $u_h^n = \tilde u_h^n + \tilde g_h^n$. Here $D_t^+$ is the discrete time derivative defined by
\[
    D_t^+ Q_h^n = \frac{Q_h^{n+1} - Q_h^n}{\Delta t},
\]
and the superscript $n+1/2$ is used to denote the average of approximations at time steps $n$ and $n+1$:
\begin{equation*}
Q^{n+1/2}_h=\frac{Q^n_h+Q^{n+1}_h}{2},\quad u^{n+1/2}_h=\frac{u^n_h+u^{n+1}_h}{2}, 
\end{equation*}
and $\tilde\P_h^n$ is understood as $\tilde\P_h^n = \tilde\P(Q_h^{n+1}, Q_h^n)$, where
\[
(\tilde\P_h^n)_{ij} = \begin{cases}
\frac{\T_R(Q^{n+1}_h)_{ij} - \T_R(Q_h^n)_{ij}}{(Q^{n+1}_h)_{ij}-(Q^n_h)_{ij}} &\text{if } (Q_h^{n+1})_{ij} - (Q^n_h)_{ij} \ne 0\\
\frac{\partial\T_R(Q_h^n)_{ij}}{\partial Q} &\text{otherwise}.
\end{cases}
\]
We will sometimes make the dependence of $\tilde \P^n_h$ on $Q^{n+1}_h$ and $Q^n_h$ more explicit by instead writing $\tilde \P(Q^{n+1}_h, Q^n_h)$.
Before proving stability and well-posedness of the above scheme, we will show that it preserves the trace-free and symmetry properties of $Q_h^n$. To do so, we require the map $\mathcal{L}: \mathbb{X}_h\times \mathbb{Y}_h\to \mathbb{X}_h\times \mathbb{Y}_h$,  $\hat{x} = \mathcal{L}x$
\[
    \{\tilde Q_h^{n+1}, \tilde u_h^{n+1}\} \xmapsto[]{\mathcal{L}} \{\widehat{Q}_h^{n+1}, \widehat{u}_h^{n+1}\},
\]
where the quantities with hats solve
\begin{subequations}
    \label{eq:fixed_point_L_map}
    \begin{equation}
    \label{eq:fixed_point_Q_equation}
    \begin{split}
		&- \int_\Omega \frac{\widehat{Q}_h^{n+1} + \tilde q_h - Q_h^n}{\Delta t} : \Phi_h\, dx\\ 
		&= L \int_\Omega \frac{\nabla\widehat{Q}_h^{n+1}+\nabla \tilde q_h+\nabla Q_h^n}{2}:\nabla\Phi_h\, dx +  \int_\Omega \left(\frac{\partial\mathcal F_1(Q_h^{n+1})}{\partial Q} - \frac{\partial\mathcal F_2(Q_h^n)}{\partial Q}\right):\Phi_h\, dx\\ 
		&- \frac{\varepsilon_2}{2} \int_\Omega\left((\tilde\P(Q_h^{n+1},Q_h^n)\odot \nabla u_h^n(\nabla \widehat{u}_h^{n+1})^\top)_S - \frac1d\operatorname{tr}(\tilde\P(Q_h^{n+1},Q_h^n)\odot \nabla u_h^n(\nabla \widehat{u}_h^{n+1})^\top)I\right):\Phi_h\, dx\\ 
		&- \frac{\varepsilon_2}{2} \int_\Omega\left((\tilde\P(Q_h^{n+1},Q_h^n)\odot \nabla u_h^n(\nabla \tilde g_h^{n+1})^\top)_S - \frac1d\operatorname{tr}(\tilde\P(Q_h^{n+1},Q_h^n)\odot \nabla u_h^n(\nabla \tilde g_h^{n+1})^\top)I\right):\Phi_h\, dx\\
		&- \varepsilon_3 \int_\Omega \left(\frac{\nabla \widehat{u}_h^{n+1} + \nabla u_h^n}{2}\cdot\operatorname{div}((\Phi_h)_S) - \frac1d\frac{\nabla \widehat{u}_h^{n+1}+\nabla u^n}{2}\cdot\nabla\operatorname{tr}\Phi_h\right)\, dx\\
		&- \frac{\varepsilon_3}{2} \int_\Omega \left(\nabla \tilde g_h^{n+1}\cdot\operatorname{div}((\Phi_h)_S) - \frac1d\nabla \tilde g_h^{n+1}\cdot\nabla\operatorname{tr}\Phi_h\right),
\end{split}
\end{equation}
\begin{equation}
\label{eq:fixed_point_elliptic_definition}
\begin{split}
    &\int_\Omega (\varepsilon_1\nabla \widehat{u}_h^{n+1} + \varepsilon_2 \T_R(Q_h^{n+1})\nabla \widehat{u}_h^{n+1})\cdot\nabla\psi_h\, dx\\ 
    &= -\int_\Omega (\varepsilon_1\nabla \tilde g_h^{n+1} + \varepsilon_2 \T_R(Q_h^{n+1})\nabla \tilde g_h^{n+1} + \varepsilon_3\operatorname{div}(Q_h^{n+1}))\cdot\nabla\psi_h\, dx
    \end{split}
    \end{equation}
\end{subequations}
for all $\Phi_h \in \mathbb{X}_h$ and $\psi_h\in\mathbb{Y}_h$, where $Q_h^{n+1} = \tilde Q_h^{n+1} + \tilde q_h$.

We will prove that this map is well defined in Theorem~\ref{thm:existence_of_solutions}, but first we will use the map to prove the following result, which implies a symmetry and trace-free preservation property of scheme~\eqref{eq:fully_discrete_scheme_definition} needed for proving solvability of the scheme:

\begin{theorem}[Trace-Free and Symmetry Preservation Generalized]
If $Q_h^n$ is trace-free and symmetric and $\lambda\in (0,1]$, we have that every $\widehat{Q}_h^{n+1}$ which satisfies $\lambda^{-1}\widehat{x} = \mathcal{L}\widehat{x}$ is trace-free and symmetric. In particular, for $\lambda = 1$, this shows that solutions $Q_h^{n+1}$ to (\ref{eq:fully_discrete_scheme_definition}) are trace-free and symmetric.
\end{theorem}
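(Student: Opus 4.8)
The idea is to feed the $Q$-equation \eqref{eq:fixed_point_Q_equation} of the map $\mathcal L$ two test functions tailored to the candidate $\widehat Q_h^{n+1}$ and to read off coercive identities. Write $\widehat x = \{P,w\}$, so that $P = \widehat Q_h^{n+1}$; the hypothesis $\lambda^{-1}\widehat x = \mathcal L\widehat x$ means exactly that, after substituting $Q_h^{n+1} = P$ and $\tilde u_h^{n+1} = w$ into the right-hand sides of \eqref{eq:fixed_point_L_map}, the solution of that (linear) system is $(\widehat Q_h^{n+1},\widehat u_h^{n+1}) = (\lambda^{-1}P,\lambda^{-1}w)$. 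In particular \eqref{eq:fixed_point_Q_equation} holds with the symbol ``$\widehat Q_h^{n+1}$'' read as $\lambda^{-1}P$ and ``$Q_h^{n+1}$'' read as $P$, while $Q_h^n,u_h^n,\tilde g_h^{n+1}$ are the given data and $Q_h^n$ is symmetric and trace-free. Since $\widehat Q_h^{n+1}=P$, it suffices to prove $\operatorname{tr}P\equiv 0$ and $P=P^\top$.

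For the trace-free property I would test \eqref{eq:fixed_point_Q_equation} with $\Phi_h = (\operatorname{tr}P)\,I$, which lies in $\mathbb{X}_h$ because $\operatorname{tr}P\in\mathbb{Y}_h$. Against a scalar multiple of $I$, every term on the right of \eqref{eq:fixed_point_Q_equation} vanishes apart from the elastic one: the $\mathcal F_2$-term is a scalar multiple of the trace-free $Q_h^n$; the $\mathcal F_1$-term and each $\varepsilon_2$-bracket of the form $(C)_S-\tfrac1d\operatorname{tr}(C)\,I$ are trace-free and hence pair to $0$ with $(\operatorname{tr}P)I$; and each $\varepsilon_3$-term vanishes by the elementary identity $\operatorname{div}\big(((\operatorname{tr}P)I)_S\big) = \nabla\operatorname{tr}P = \tfrac1d\nabla\operatorname{tr}\big((\operatorname{tr}P)I\big)$. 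Using $\operatorname{tr}Q_h^n=0$ in the time-derivative term and $\nabla Q:\nabla\big((\operatorname{tr}Q)I\big)=|\nabla\operatorname{tr}Q|^2$ (which also kills the $\nabla Q_h^n$ cross term), what is left is
\[
	-\frac{\lambda^{-1}}{\Delta t}\,\norm{\operatorname{tr}P}_{L^2(\Omega)}^2 \;=\; \frac{ML\lambda^{-1}}{2}\,\norm{\nabla\operatorname{tr}P}_{L^2(\Omega)}^2 ,
\]
whose left side is $\le 0$ and right side $\ge 0$ (here $\lambda>0$ and $M,L>0$); hence both vanish and $\operatorname{tr}P\equiv 0$.

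For symmetry the same scheme works with the antisymmetric test function $\Phi_h = P-P^\top\in\mathbb{X}_h$. Now the $\mathcal F_2$-term (a multiple of the symmetric $Q_h^n$), the $\mathcal F_1$-term, and each $\varepsilon_2$-bracket (symmetric, being a symmetrization minus a multiple of $I$) all pair to $0$ with an antisymmetric matrix; each $\varepsilon_3$-term vanishes because $(P-P^\top)_S=0$ and $\operatorname{tr}(P-P^\top)=0$; and the $Q_h^n$ cross terms in the time-derivative and elastic terms drop because $Q_h^n$ and $\nabla Q_h^n$ are symmetric. With $A:=P-P^\top$, the pointwise identities $P:A=\tfrac12|A|^2$ and $\nabla P:\nabla A=\tfrac12|\nabla A|^2$ leave
\[
	-\frac{\lambda^{-1}}{2\Delta t}\,\norm{A}_{L^2(\Omega)}^2 \;=\; \frac{ML\lambda^{-1}}{4}\,\norm{\nabla A}_{L^2(\Omega)}^2 ,
\]
which forces $A\equiv 0$, i.e.\ $P=P^\top$. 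Finally, when $\lambda=1$ the relation $\lambda^{-1}\widehat x=\mathcal L\widehat x$ says precisely that $\widehat x$ is a fixed point of $\mathcal L$; absorbing the boundary-data pieces $\tilde g_h^{n+1}$ shows this is equivalent to $(Q_h^{n+1},\tilde u_h^{n+1})=\widehat x$ solving the scheme \eqref{eq:fully_discrete_scheme_definition}, so scheme solutions inherit both properties.

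I expect the only genuine effort to be bookkeeping: for each of the two test functions, checking that every one of the $\varepsilon_2$-, $\varepsilon_3$-, bulk-potential-, and boundary-data contributions really drops out, and keeping the roles of $P$ (the input of $\mathcal L$) and $\lambda^{-1}P$ (its output) straight so that the surviving time-derivative and elastic terms carry opposite signs. The structural ingredients are light — that the discrete bulk force and the $\varepsilon_2$-coupling are valued in the symmetric trace-free matrices, that $Q_h^n$ is symmetric and trace-free, and the two linear-algebra identities for $\nabla Q:\nabla((\operatorname{tr}Q)I)$ and $\nabla P:\nabla(P-P^\top)$ — so there is no serious analytic obstacle.
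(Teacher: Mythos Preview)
Your overall strategy---test \eqref{eq:fixed_point_Q_equation} with $(\operatorname{tr}P)\,I$ for the trace and with $P-P^\top$ for symmetry, and note that the $\varepsilon_2$-, $\varepsilon_3$-, and $\mathcal F_2$-terms drop---is exactly the paper's. But there is a genuine gap: you assert that the $\partial\mathcal F_1(P)/\partial Q$ term is trace-free (in the first argument) and symmetric (in the second), and hence pairs to zero with the two test functions. Neither claim is true before $P$ is known to be trace-free and symmetric. With the formula used in the paper,
\[
\frac{\partial\mathcal F_1(P)}{\partial Q}=\beta_1 P-b\Bigl(P^2-\tfrac1d\operatorname{tr}(P^2)I\Bigr)+\beta_2\operatorname{tr}(P^2)\,P,
\]
one has $\operatorname{tr}\bigl(\partial\mathcal F_1(P)/\partial Q\bigr)=(\beta_1+\beta_2\operatorname{tr}(P^2))\operatorname{tr}P$, which vanishes only if $\operatorname{tr}P=0$; and the terms $\beta_1 P$, $\beta_2\operatorname{tr}(P^2)P$, $P^2$ are symmetric only if $P$ is. So your two displayed identities are not correct as equalities.

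The fix is light but not automatic: the $\mathcal F_1$-contribution has the right sign rather than being zero. For the trace step, the pairing with $(\operatorname{tr}P)I$ equals $\int_\Omega(\beta_1+\beta_2\operatorname{tr}(P^2))|\operatorname{tr}P|^2\,dx\ge 0$ (using symmetry of $P$, which the paper establishes first), and this extra nonnegative term on the right still forces $\operatorname{tr}P\equiv 0$. For the symmetry step, the paper does not try to compute the pairing directly; instead it uses $(\partial\mathcal F_1(P)/\partial Q)^\top=\partial\mathcal F_1(P^\top)/\partial Q$ together with the convexity of $\mathcal F_1$ to get
\[
\Bigl\langle \tfrac{\partial\mathcal F_1(P)}{\partial Q}-\tfrac{\partial\mathcal F_1(P^\top)}{\partial Q},\,P-P^\top\Bigr\rangle\ge 0,
\]
which again yields the desired inequality. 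Once you replace ``vanishes'' by ``is nonnegative'' in these two places (and, for the trace step, invoke the symmetry just proved so that $\operatorname{tr}(P^2)=|P|_F^2\ge 0$), your argument goes through and coincides with the paper's.
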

\begin{proof}
	Let $\lambda \in (0,1]$ and let $\widehat{Q}_h^{n+1}$ satisfy $\lambda^{-1}\widehat{x} = \mathcal{L}\widehat{x}$. We begin by showing that $\widehat{Q}_h^{n+1}$ is symmetric. Take $\Phi_h = \widehat{Q}_h^{n+1} - (\widehat{Q}_h^{n+1})^\top := V_h^{n+1}$ in \eqref{eq:fixed_point_Q_equation}. Note that $V_h^{n+1}$ is skew-symmetric. Writing $\overline Q_h^{n+1} = \widehat{Q}_h^{n+1} + \tilde q_h$, it follows that
	\[
	\int_\Omega\left((\tilde\P(\overline Q_h^{n+1},Q_h^n)\odot \nabla u_h^n(\nabla \widehat{u}_h^{n+1})^\top)_S - \frac{1}{d}\operatorname{tr}(\tilde\P(\overline Q_h^{n+1},Q_h^n)\odot \nabla u_h^n(\nabla \widehat{u}_h^{n+1})^\top)I\right):V_h^{n+1}\, dx = 0
	\]
	since $(\tilde\P(\overline Q_h^{n+1},Q_h^n)\odot \nabla u_h^n(\nabla \widehat{u}_h^{n+1})^\top)_S - \frac{1}{d}\operatorname{tr}(\tilde\P(\overline Q_h^{n+1},Q_h^n)\odot \nabla u_h^n(\nabla \widehat{u}_h^{n+1})^\top)I$ is symmetric and $V_h^{n+1}$ is skew-symmetric. For the same reason, 
	\begin{equation*}
	\int_\Omega\left((\tilde\P(\overline Q_h^{n+1},Q_h^n)\odot \nabla u_h^n(\nabla \tilde g_h^{n+1})^\top)_S - \frac{1}{d}\operatorname{tr}(\tilde\P(\overline Q_h^{n+1},Q_h^n)\odot \nabla u_h^n(\nabla \tilde g_h^{n+1})^\top)I\right):V_h^{n+1}\, dx = 0.
	\end{equation*}
	Furthermore, we have
	\begin{equation*}
	\int_\Omega \left(\frac{\nabla \widehat{u}_h^{n+1}+\nabla u_h^n}{2}\cdot\operatorname{div}((V_h^{n+1})_S) - \frac{1}{d}\frac{\nabla \widehat{u}_h^{n+1}+\nabla u_h^n}{2}\cdot\nabla\operatorname{tr}V_h^{n+1}\right)\, dx = 0
	\end{equation*}
	and
	\begin{equation*}
	\int_\Omega \left(\nabla\tilde g_h^{n+1}\cdot \operatorname{div}((V_h^{n+1})_S) - \frac1d\nabla\tilde g_h^{n+1}\cdot\nabla\operatorname{tr}V_h^{n+1}\right)\, dx = 0
	\end{equation*}
	since $(V_h^{n+1})_S = 0$ and $\operatorname{tr}V_h^{n+1} = 0$. We also have
	\[
	\int_\Omega \tilde q_h : V_h^{n+1}\, dx = \int_\Omega \nabla \tilde q_h : \nabla V_h^{n+1}\, dx = \int_\Omega Q_h^n:V_h^{n+1}\, dx = \int_\Omega \nabla Q_h^n : \nabla V_h^{n+1}\, dx = 0
	\]
	because $\tilde q_h$ and $Q_h^n$ are symmetric and $V_h^{n+1}$ is skew-symmetric. Using these facts again gives
	\[
	\int_\Omega \frac{\partial \mathcal{F}_2(Q_h^n)}{\partial Q}:V_h^{n+1}\, dx = \int_\Omega \left((\beta_1 - a)Q_h^n + (\beta_2 - c)\operatorname{tr}((Q_h^n)^2)Q_h^n\right):V_h^{n+1}\, dx = 0.
	\]
	Thus we have
	\begin{equation}
	\frac{1}{\lambda\Delta t}\int_\Omega \widehat{Q}_h^{n+1}:V_h^{n+1}\, dx + \frac{L}{2\lambda}\int_\Omega \nabla \widehat{Q}_h^{n+1}: \nabla V_h^{n+1}\, dx + \int_\Omega \frac{\partial \mathcal{F}_1(\overline{Q}_h^{n+1})}{\partial Q}: V_h^{n+1}\, dx = 0.
	\end{equation}
	Now because $V_h^{n+1}$ is skew-symmetric and $\left(\frac{\partial \mathcal{F}_1(\overline{Q}_h^{n+1})}{\partial Q}\right)^\top = \frac{\partial \mathcal{F}_1((\overline{Q}_h^{n+1})^\top)}{\partial Q}$, we also have
	\begin{equation}
	-\frac{1}{\lambda\Delta t}\int_\Omega (\widehat{Q}_h^{n+1})^\top:V_h^{n+1}\, dx - \frac{L}{2\lambda}\int_\Omega \nabla (\widehat{Q}_h^{n+1})^\top: \nabla V_h^{n+1}\, dx - \int_\Omega \frac{\partial \mathcal{F}_1((\overline{Q}_h^{n+1})^\top)}{\partial Q}: V_h^{n+1}\, dx = 0.
	\end{equation}
	Adding the previous two equations together gives
	\begin{equation}
	\frac{1}{\lambda\Delta t}\|V_h^{n+1}\|^2 + \frac{L}{2\lambda}\|\nabla V_h^{n+1}\|^2 + \left\langle \frac{\partial \mathcal{F}_1(\overline{Q}_h^{n+1})}{\partial Q} - \frac{\partial \mathcal{F}_1((\overline{Q}_h^{n+1})^\top)}{\partial Q}, \overline{Q}_h^{n+1} - (\overline{Q}_h^{n+1})^\top\right\rangle = 0,
	\end{equation}
 where we have used $V_h^{n+1} = \widehat{Q}_h^{n+1} - (\widehat{Q}_h^{n+1})^\top = \overline{Q}_h^{n+1} - (\overline{Q}_h^{n+1})^\top$ since $\tilde q_h^{n+1}$ is symmetric.
	And since $\mathcal{F}_1$ is convex, it follows that
	\begin{align*}
	&\left\langle \frac{\partial \mathcal{F}_1(\overline{Q}_h^{n+1})}{\partial Q} - \frac{\partial \mathcal{F}_1((\overline{Q}_h^{n+1})^\top)}{\partial Q}, \overline{Q}_h^{n+1} - (\overline{Q}_h^{n+1})^\top\right\rangle \\
	&= \left\langle \frac{\partial \mathcal{F}_1(\overline{Q}_h^{n+1})}{\partial Q} , \overline{Q}_h^{n+1} - (\overline{Q}_h^{n+1})^\top\right\rangle - \left\langle \frac{\partial \mathcal{F}_1((\overline{Q}_h^{n+1})^\top)}{\partial Q}, \overline{Q}_h^{n+1} - (\overline{Q}_h^{n+1})^\top\right\rangle\\
	&\ge -(\mathcal{F}_1((\overline{Q}_h^{n+1})^\top) - \mathcal{F}_1(\overline{Q}_h^{n+1})) - (\mathcal{F}_1(\overline{Q}_h^{n+1}) - \mathcal{F}_1((\overline{Q}_h^{n+1})^\top))\\
	&= 0.
	\end{align*}
	Thus
	\[
	\frac{1}{\lambda\Delta t} \|V_h^{n+1}\|^2 + \frac{L}{2\lambda}\|\nabla V_h^{n+1}\|^2 \le 0,
	\]
	which implies that $V_h^{n+1} \equiv 0$ so that $\widehat{Q}_h^{n+1}$ is symmetric.
	
	We now show that $\widehat{Q}_h^{n+1}$ is trace-free. Taking $\Phi_h = \operatorname{tr}(\widehat{Q}_h^{n+1})I$ as a test function in~\eqref{eq:fixed_point_Q_equation}, we have
	\begin{align*}
	&-\frac{1}{\lambda\Delta t}\int_\Omega \lvert \operatorname{tr}\widehat{Q}_h^{n+1}\rvert^2\, dx\\ 
	&\hspace{5ex}= L\int_\Omega \frac{\lambda^{-1}\nabla \widehat{Q}_h^{n+1}+\nabla\tilde q_h+\nabla Q_h^n}{2} : \nabla(\operatorname{tr}(\widehat{Q}_h^{n+1})I)\, dx + \int_\Omega \left(\frac{\partial\mathcal{F}_1(\overline{Q}_h^{n+1})}{\partial Q} - \frac{\partial\mathcal{F}_2(Q_h^n)}{\partial Q}\right) : \operatorname{tr}(\widehat{Q}_h^{n+1})I\, dx\\
	&\hspace{5ex}= \frac{L}{2\lambda}\int_\Omega \lvert \nabla \operatorname{tr}(\widehat{Q}_h^{n+1})\rvert^2\, dx + \int_\Omega \left(\frac{\partial\mathcal{F}_1(\overline{Q}_h^{n+1})}{\partial Q} - \frac{\partial\mathcal{F}_2(Q_h^n)}{\partial Q}\right) : \operatorname{tr}(\widehat{Q}_h^{n+1})I\, dx,
	\end{align*}
	where for the last equality we used $\nabla A : \nabla (\operatorname{tr}(B)I) = \nabla \operatorname{tr}(A) \cdot \nabla \operatorname{tr}(B)$ and the fact that $\tilde q_h$ and $Q_h^n$ are trace-free. The remaining terms involving $\Grad u^n_h$ and $\Grad u^{n+1}_h$ cancel out due to the structure of the test function. Then computing the second integral above, we have
	\[
	\int_\Omega \frac{\partial\mathcal{F}_2(Q_h^n)}{\partial Q} : \operatorname{tr}(\widehat{Q}_h^{n+1})I\, dx = \int_\Omega \left((\beta_1 - a)Q_h^n + (\beta_2 - c)\operatorname{tr}((Q_h^n)^2)Q_h^n\right) : \operatorname{tr}(\widehat{Q}_h^{n+1})I\, dx = 0
	\]
	since $\operatorname{tr}(Q_h^n) = 0$. Furthermore,
	\begin{align*}
	&\int_\Omega \frac{\partial\mathcal{F}_1(\overline{Q}_h^{n+1})}{\partial Q} : \operatorname{tr}(\widehat{Q}_h^{n+1})I\, dx \\
	&\hspace{5ex}= \int_\Omega \left(\beta_1 \overline{Q}_h^{n+1} - b((\overline{Q}_h^{n+1})^2 - \frac{1}{d}\operatorname{tr}((\overline{Q}_h^{n+1})^2)I) + \beta_2\operatorname{tr}((\overline{Q}_h^{n+1})^2)\overline{Q}_h^{n+1}\right) : \operatorname{tr}(\widehat{Q}_h^{n+1})I\, dx\\
	&\hspace{5ex}= \int_\Omega \left(\beta_1\lvert \operatorname{tr}(\overline{Q}_h^{n+1})\rvert^2 + \beta_2\operatorname{tr}((\overline{Q}_h^{n+1})^2)\lvert\operatorname{tr}(\overline{Q}_h^{n+1})\rvert^2\right)\, dx,
	\end{align*}
	where we have used $\operatorname{tr}(\widehat{Q}_h^{n+1}) = \operatorname{tr}(\overline{Q}_h^{n+1})$ since $\operatorname{tr}(\tilde q_h) = 0$. Using this fact again, it follows that
	\[
	\int_\Omega \left(\frac{1}{\lambda\Delta t} + \beta_1 + \beta_2\operatorname{tr}((\overline{Q}_h^{n+1})^2)\right)\lvert\operatorname{tr}(\overline{Q}^{n+1})\rvert^2\, dx + \frac{L}{2\lambda}\int_\Omega \lvert \nabla \operatorname{tr}(\overline{Q}_h^{n+1})\rvert^2\, dx = 0.
	\]
	Now since $\overline{Q}_h^{n+1}$ is symmetric, we have that $\operatorname{tr}((\overline{Q}_h^{n+1})^2) = \lvert \overline{Q}_h^{n+1}\rvert_F^2 \ge 0$. We also have $\lambda, \beta_1, \beta_2 > 0$. Thus,
	\[
	\frac{1}{\Delta t}\int_\Omega \lvert \operatorname{tr}(\overline{Q}_h^{n+1})\rvert^2\, dx = \frac{1}{\Delta t}\int_\Omega \lvert \operatorname{tr}(\widehat{Q}_h^{n+1})\rvert^2\, dx \le 0,
	\]
	which shows that $\operatorname{tr}(\widehat{Q}_h^{n+1}) \equiv 0$, as desired.
\end{proof}

Note that it follows from this result that $\tilde \P^n_h$ is also symmetric. Next, we derive a uniform energy estimate for the scheme.

\begin{theorem}[Energy Stability]
	\label{thm:fully_discrete_energy_stability}
	Assume that $\Delta t < \min\{\frac{L}{2d|\varepsilon_3|}, \frac{\varepsilon_1 - |\varepsilon_2|R}{3(\varepsilon_1 + |\varepsilon_2|R^2)}\}$ and $\varepsilon_1 > |\varepsilon_2|R$. Then the approximations defined by the scheme (\ref{eq:fully_discrete_scheme_definition}) satisfy for all $N\in\mathbb{N}$
	\begin{align*}
	&\int_\Omega \mathcal{F}_B(Q_h^N)\, dx + \frac14\left(L - 2d\Delta t |\varepsilon_3|\right)\|\nabla Q_h^N\|_{L_2}^2 + \frac{1}{6}\left(\varepsilon_1 - |\varepsilon_2|R\right)\|\nabla u_h^N\|_{L^2}^2\\
	&\le C(1+t^N\operatorname{exp}(Ct^N))\left( \|\nabla Q_h^0\|_{L^2}^2 + \mathcal{F}_B(Q_h^0) + (1+R)\|\nabla u_h^0\|_{L^2}^2 + \|\nabla \tilde g_h^0\|_{L^2}^2\right)\\ 
	&+ C(1 + t^N\operatorname{exp}(Ct^N))\Delta t \sum_{n=0}^{N-1} \|D_t^+ \nabla\tilde g_h^n\|_{L^2}^2 + C\|\nabla\tilde g_h^N\|_{L^2}^2 + C\operatorname{exp}(Ct^N)\Delta t\sum_{n=0}^{N-1} \|\nabla \tilde g_h^n\|_{L^2}^2
	\end{align*}
	for some constant $C>0$ not depending on $\Delta t$ or $h$.
\end{theorem}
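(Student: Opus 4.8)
The plan is to test the $Q$-equation~\eqref{eq:Qfullydiscrete} with the discrete time derivative $\Phi_h=D_t^+Q_h^n:=(Q_h^{n+1}-Q_h^n)/\Delta t$, exploiting the convex splitting $\mathcal F_B=\mathcal F_1-\mathcal F_2$, and to test the discrete elliptic relation~\eqref{eq:ufullydiscrete} at the time levels $n+1$ and $n$ against $\tilde u_h^n$ and $\tilde u_h^{n+1}$ respectively; subtracting these two recasts the electric coupling terms as a telescoping quantity plus terms involving $\nabla\tilde g_h$, after which a discrete Grönwall argument closes the estimate. First I would record the auxiliary bound coming from~\eqref{eq:ufullydiscrete} alone: choosing $\psi_h=\tilde u_h^n$, writing $\nabla\tilde u_h^n=\nabla u_h^n-\nabla\tilde g_h^n$, and using $\lvert\T_R(Q_h^n)\rvert_F\le R$, the hypothesis $\varepsilon_1>2\lvert\varepsilon_2\rvert(R+R^2)$, the elementwise estimate $\lVert\div Q_h^n\rVert\le\sqrt d\,\lVert\nabla Q_h^n\rVert$, and Young's inequality, one obtains
\[
    \lVert\nabla u_h^n\rVert_{L^2}^2\le C\bigl(\lVert\nabla Q_h^n\rVert_{L^2}^2+\lVert\nabla\tilde g_h^n\rVert_{L^2}^2\bigr),
\]
with $C$ depending only on the fixed parameters. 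This will be used repeatedly to trade stray $\lVert\nabla u_h^n\rVert$ norms for $\lVert\nabla Q_h^n\rVert$ plus data, and once more at the very end to produce the asserted $\lVert\nabla u_h^N\rVert_{L^2}^2$ term with its coefficient $\tfrac M2(\varepsilon_1/2-\lvert\varepsilon_2\rvert(R+R^2))$.

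Taking $\Phi_h=D_t^+Q_h^n$ in~\eqref{eq:Qfullydiscrete} --- a legitimate test function which, by the preceding trace-free and symmetry preservation theorem, is symmetric and trace-free, so the $(\,\cdot\,)_S$ and $\tfrac1d\tr(\,\cdot\,)I$ pieces of the electric terms drop against it --- the time term becomes $-\lVert D_t^+Q_h^n\rVert^2$, the elastic term becomes $\tfrac{ML}{2\Delta t}(\lVert\nabla Q_h^{n+1}\rVert^2-\lVert\nabla Q_h^n\rVert^2)$, and convexity of $\mathcal F_1$ and $\mathcal F_2$ (from the Convex Splitting lemma) bounds the splitting term below by $\tfrac{M}{\Delta t}\int_\Omega(\mathcal F_B(Q_h^{n+1})-\mathcal F_B(Q_h^n))\,dx$. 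Using the identity $\tilde\P_h^n\odot(Q_h^{n+1}-Q_h^n)=\T_R(Q_h^{n+1})-\T_R(Q_h^n)$, the remaining two terms are $-\tfrac{M\varepsilon_2}{2\Delta t}\int_\Omega(\nabla u_h^n)^\top(\T_R(Q_h^{n+1})-\T_R(Q_h^n))\nabla u_h^{n+1}\,dx$ and $-\tfrac{M\varepsilon_3}{\Delta t}\int_\Omega\nabla u_h^{n+1/2}\cdot\div(Q_h^{n+1}-Q_h^n)\,dx$, so after multiplying through by $\Delta t$ it remains to control these.

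To do so, I would test~\eqref{eq:ufullydiscrete} at level $n+1$ with $\psi_h=\tilde u_h^n$ and at level $n$ with $\psi_h=\tilde u_h^{n+1}$ and subtract; the $\varepsilon_1$-terms cancel, giving an exact identity that expresses $\varepsilon_2\int_\Omega(\nabla u_h^n)^\top(\T_R(Q_h^{n+1})-\T_R(Q_h^n))\nabla u_h^{n+1}$ in terms of $\varepsilon_3\bigl(\int\div Q_h^{n+1}\cdot\nabla u_h^n-\int\div Q_h^n\cdot\nabla u_h^{n+1}\bigr)$ and of terms pairing $\nabla\tilde g_h^n,\nabla\tilde g_h^{n+1}$ against the remaining quantities at levels $n$ and $n+1$ appearing in~\eqref{eq:ufullydiscrete}. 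Substituting back, the $\varepsilon_3$-cross-terms cancel against the matching pieces of $-M\varepsilon_3\int\nabla u_h^{n+1/2}\cdot\div(Q_h^{n+1}-Q_h^n)$, leaving precisely the telescoping difference $-\tfrac{M\varepsilon_3}{2}\bigl(\int\nabla u_h^{n+1}\cdot\div Q_h^{n+1}-\int\nabla u_h^n\cdot\div Q_h^n\bigr)$ plus the $\nabla\tilde g_h$-terms. Hence, with
\[
    \mathcal G_h^n:=M\!\int_\Omega\!\mathcal F_B(Q_h^n)\,dx+\tfrac{ML}{2}\lVert\nabla Q_h^n\rVert_{L^2}^2-\tfrac{M\varepsilon_3}{2}\!\int_\Omega\!\nabla u_h^n\cdot\div Q_h^n\,dx,
\]
one arrives at $\mathcal G_h^{n+1}-\mathcal G_h^n+\Delta t\lVert D_t^+Q_h^n\rVert^2\le(\text{terms in }\nabla\tilde g_h^n,\nabla\tilde g_h^{n+1})$. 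Summing over $n=0,\dots,N-1$, using summation by parts on the $\nabla\tilde g_h$-terms to expose the discrete differences $D_t^+\nabla\tilde g_h^n$ and a single endpoint pairing $\int\nabla u_h^N\cdot\nabla\tilde g_h^N$, applying Young's inequality, and reducing the remaining $\lVert\nabla u_h^n\rVert$ norms by the auxiliary elliptic bound, one is led to an inequality $\mathcal G_h^N\le C(\text{data})+C\Delta t\sum_{n=1}^N\mathcal G_h^n$. A discrete Grönwall inequality, available because $\Delta t<1/4$, then gives $\mathcal G_h^N\le C(1+t^N\exp(Ct^N))(\text{data})$; finally, using $\mathcal F_B\ge0$ and bounding the indefinite contribution $-\tfrac{M\varepsilon_3}{2}\int\nabla u_h^N\cdot\div Q_h^N$ from below --- the condition $\Delta t<L/(2d\lvert\varepsilon_3\rvert)$ being precisely what leaves the coefficient $\tfrac14(ML-2d\Delta tM\lvert\varepsilon_3\rvert)$ in front of $\lVert\nabla Q_h^N\rVert^2$ --- and feeding in the elliptic bound once more to bring back the $\lVert\nabla u_h^N\rVert^2$ term, one obtains the asserted estimate.

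I expect the third step to be the main obstacle: neither the mixed-time-level quantity $\int_\Omega(\nabla u_h^n)^\top(\T_R(Q_h^{n+1})-\T_R(Q_h^n))\nabla u_h^{n+1}$ nor the divergence term $\int_\Omega\nabla u_h^{n+1/2}\cdot\div(Q_h^{n+1}-Q_h^n)$ is individually of summable form, and it is only through the crossed testing of~\eqref{eq:ufullydiscrete} --- which is exactly why the scheme is built with the secant slope $\tilde\P_h^n$ and the midpoint averaging in the $\varepsilon_3$-term --- that they collapse into a telescoping quantity. The delicate part of the bookkeeping is separating the $\nabla\tilde g_h$-terms into genuine data and pieces that must be reabsorbed through the auxiliary elliptic bound and the Grönwall loop, and checking that the restrictions $\Delta t<\min\{1/4,\,L/(2d\lvert\varepsilon_3\rvert)\}$ and $\varepsilon_1>2\lvert\varepsilon_2\rvert(R+R^2)$ are exactly what keep $\mathcal G_h^n$ coercive after those absorptions.
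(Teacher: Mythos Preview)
Your crossed-testing idea---test~\eqref{eq:ufullydiscrete} at level $n+1$ with $\tilde u_h^n$ and at level $n$ with $\tilde u_h^{n+1}$, subtract, and use the resulting identity to convert the $\varepsilon_2$ coupling into $\varepsilon_3$ and $\nabla\tilde g_h$ contributions---is correct and is a genuinely different manipulation from the paper's. The paper instead \emph{averages}~\eqref{eq:ufullydiscrete} at times $n$ and $n+1$ and tests the average with $\psi_h=D_t^+\tilde u_h^n$; combined with the $Q$-equation test, this produces a telescoping that carries $\tfrac{M\varepsilon_1}{2}\lvert\nabla u_h^n\rvert^2+\tfrac{M\varepsilon_2}{2}(\nabla u_h^n)^\top\T_R(Q_h^n)\nabla u_h^n$ inside the discrete energy, and a second use of~\eqref{eq:ufullydiscrete} (apply $D_t^+$ and test with $\tilde u_h^n$) turns the boundary contributions into the same $D_t^+\nabla\tilde g_h^n$ and endpoint terms you obtain by summation by parts. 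So after all the dust settles the two routes lead to equivalent balances; your telescoping algebra for the $\varepsilon_3$ term is correct.

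The gap is in how you close. Your auxiliary bound $\lVert\nabla u_h^n\rVert^2\le C(\lVert\nabla Q_h^n\rVert^2+\lVert\nabla\tilde g_h^n\rVert^2)$ is true, but using it to absorb the indefinite piece $-\tfrac{M\varepsilon_3}{2}\int\nabla u_h^n\cdot\div Q_h^n$ of $\mathcal G_h^n$ requires $L(\varepsilon_1-R\lvert\varepsilon_2\rvert)>d\lvert\varepsilon_3\rvert^2$, which is \emph{not} among the theorem's hypotheses; for large $\lvert\varepsilon_3\rvert$ (with $\Delta t$ correspondingly small) your $\mathcal G_h^n$ fails to be coercive in $\lVert\nabla Q_h^n\rVert$, and the Gr\"onwall loop cannot start. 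The fix is to use~\eqref{eq:ufullydiscrete} at each fixed time as an \emph{identity} rather than an inequality: testing with $\psi_h=\tilde u_h^n$ gives
\[
-\tfrac{M\varepsilon_3}{2}\!\int_\Omega\!\nabla u_h^n\!\cdot\!\div Q_h^n
=\tfrac{M\varepsilon_1}{2}\lVert\nabla u_h^n\rVert^2+\tfrac{M\varepsilon_2}{2}\!\int_\Omega\!(\nabla u_h^n)^\top\T_R(Q_h^n)\nabla u_h^n
-\tfrac{M}{2}\!\int_\Omega\!\bigl(\varepsilon_1\nabla u_h^n+\varepsilon_2\T_R(Q_h^n)\nabla u_h^n+\varepsilon_3\div Q_h^n\bigr)\!\cdot\!\nabla\tilde g_h^n,
\]
so that, modulo $\nabla\tilde g_h^n$-data, your $\mathcal G_h^n$ coincides with the paper's Lyapunov functional and is manifestly coercive under only $\varepsilon_1>2\lvert\varepsilon_2\rvert(R+R^2)$. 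Finally, the explicit coefficient $\tfrac14(ML-2d\Delta tM\lvert\varepsilon_3\rvert)$ does not come from bounding the endpoint cross-term as you suggest; in the paper it arises from peeling off the $n=N$ summand of the $\Delta t$-weighted sum $\tfrac{d\Delta tM\lvert\varepsilon_3\rvert}{2}\sum_{n=0}^N\lVert\nabla Q_h^n\rVert^2$ before Gr\"onwall, which is where the restriction $\Delta t<L/(2d\lvert\varepsilon_3\rvert)$ actually enters.
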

\begin{proof}
	The proof of this result can be found in Appendix~\ref{app:energystability}.
\end{proof}
\begin{remark}[Physical parameter range]
	The condition $\epsilon_1>|\epsilon_2|R$ may seem restrictive. Some typical values for the constants $\epsilon_1$ and $\epsilon_2$ can be found for example in the book by Stewart~\cite{Stewart2008} on page 330. These are $(\epsilon_1,\epsilon_2)=(4.7,-0.7)$, $(\epsilon_1,\epsilon_2)=(5.538,-0.167)$ and $(\epsilon_1,\epsilon_2)=(18.5,11.5)$. If $R\leq 3/2$, which would still allow the entries of the Q-tensor to attain their whole physical range, each of these parameter combinations would satisfy the assumptions.
\end{remark}
As a corollary, we obtain:
\begin{corollary}
	\label{corollary:fully_discrete_DtQ_L2_bound}
	Assume the hypotheses of Theorem~\ref{thm:fully_discrete_energy_stability}. Then 
	\[
	\Delta t \sum_{n=0}^{N-1}\int_\Omega \lvert D_t^+ Q_h^n\rvert^2\, dx < C
	\]
	for all $h,\Delta t >0$ sufficiently small for some $C<\infty$.
\end{corollary}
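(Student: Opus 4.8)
The point is that the dissipation term $\Delta t\sum_{n=0}^{N-1}\int_\Omega |D_t^+ Q_h^n|^2\,dx$ is already present on the left-hand side of inequality~\eqref{eq:L2_bound_on_derivative_Qn} in the proof of Theorem~\ref{thm:fully_discrete_energy_stability}, where it was simply discarded; here I would keep it. Concretely, I would repeat the rearrangement of~\eqref{eq:L2_bound_on_derivative_Qn} carried out in that proof — with the same choices $\delta_1 = 1/2-\Delta t$, $\delta_2 = 1-\Delta t$, $\delta_3 = L/(2\sqrt d|\varepsilon_3|)$ — but without dropping $\Delta t\sum_{n=0}^{N-1}\int_\Omega |D_t^+ Q_h^n|^2\,dx$, obtaining
\[
\Delta t\sum_{n=0}^{N-1}\int_\Omega |D_t^+ Q_h^n|^2\,dx + V(t^N) \le A(t^N) + \Delta t\sum_{n=0}^{N-1} B(t^n)V(t^n),
\]
with $V$, $A$, $B$ exactly as defined there.

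Next I would note that $V(t^N)\ge 0$: indeed $\mathcal F_B\ge 0$ by the choice of $A_0$, the coefficient $ML-2d\Delta tM|\varepsilon_3|$ of $\|\nabla Q_h^N\|_{L^2}^2$ is positive since $\Delta t< L/(2d|\varepsilon_3|)$, and $\varepsilon_1/2-|\varepsilon_2|(R+R^2)>0$ by the assumption $\varepsilon_1>2|\varepsilon_2|(R+R^2)$. Discarding $V(t^N)$ from the left-hand side leaves
\[
\Delta t\sum_{n=0}^{N-1}\int_\Omega |D_t^+ Q_h^n|^2\,dx \le A(t^N) + \Delta t\sum_{n=0}^{N-1} B(t^n)V(t^n),
\]
so it only remains to bound the right-hand side uniformly in $h$ and $\Delta t$.

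For $\Delta t$ small enough, $B(t^n)$ is a fixed constant independent of $n$ (use $L-2d\Delta t|\varepsilon_3|\ge L/2$). For each $n\le N$, Theorem~\ref{thm:fully_discrete_energy_stability} applied with $n$ in place of $N$ bounds $M\mathcal F_B(Q_h^n)$, $\|\nabla Q_h^n\|_{L^2}^2$ and $\|\nabla u_h^n\|_{L^2}^2$, hence $V(t^n)$, by a constant depending only on $T$ and the data — here one uses $t^n\le T$ together with the fact that the right-hand side of that theorem is controlled by the initial data and by the regularity assumptions $\tilde g\in L^\infty(0,T;H^1)\cap W^{1,2}(0,T;H^1)$ and the $H^1$-stability of the projection $\tilde g\mapsto\tilde g_h$. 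Therefore $\Delta t\sum_{n=0}^{N-1}B(t^n)V(t^n)\le C\,N\Delta t\le CT$. The same ingredients give $A(t^N)\le C$ uniformly; the only term requiring a word is $\Delta t\sum_{n=0}^{N-1}\|D_t^+\nabla\tilde g_h^n\|_{L^2}^2$, which is dominated by $\|\partial_t\nabla\tilde g\|_{L^2(0,T;L^2)}^2$ after using projection stability and writing $D_t^+\nabla\tilde g_h^n$ as the average of $\partial_t\nabla\tilde g_h$ over $(t^n,t^{n+1})$ and applying Cauchy--Schwarz. Combining these bounds yields the corollary.

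The argument is essentially bookkeeping on the estimate already established in Theorem~\ref{thm:fully_discrete_energy_stability}; the only mildly delicate step is the uniform-in-$(h,\Delta t)$ control of the boundary- and initial-data terms in $A$, i.e.\ passing from the continuous regularity of $\tilde g$ to the discrete quantities that appear, but this is standard given the stated hypotheses.
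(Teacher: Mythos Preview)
Your proposal is correct and is precisely the argument the paper has in mind: its proof is the single line ``This follows directly from equation~\eqref{eq:L2_bound_on_derivative_Qn} and Theorem~\ref{thm:fully_discrete_energy_stability},'' and you have simply unpacked that sentence---retaining the dissipation term on the left of~\eqref{eq:L2_bound_on_derivative_Qn} and bounding every term on the right via the energy estimate and the assumed regularity of the data.
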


\begin{proof}
This follows directly from equation~\eqref{eq:L2_bound_on_derivative_Qn} and Theorem~\ref{thm:fully_discrete_energy_stability}.
\end{proof}


\subsection{Solvability of the scheme}
In this section, we will show that solutions to the scheme~\eqref{eq:fully_discrete_scheme_definition} exist. Before doing so, we require the following lemma.

\begin{lemma}\label{lem:PLipschitz}
Let $B\in\R^{d\times d}$. Then $\tilde\P(\cdot, B)$ is Lipschitz.
\end{lemma}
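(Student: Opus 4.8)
The plan is to reduce the claim to a one-dimensional statement about difference quotients of a single fixed smooth function, handled entry by entry. Because the truncation operator acts entrywise, $\T_R(Q)_{ij} = g(Q_{ij})$ where $g := \varphi_{\epsilon} * \widehat{\T}_R : \R \to \R$ is one fixed scalar function, the same for every pair $(i,j)$. Since $\widehat{\T}_R$ is bounded (indeed $|\widehat{\T}_R| \le R/d$) and $\varphi_{\epsilon}$ is a standard mollifier, $g$ is $C^\infty$ with all derivatives bounded; in particular I would record at the outset that $M_2 := \|g''\|_{L^\infty(\R)} \le \|\varphi_{\epsilon}''\|_{L^1(\R)}\, R/d < \infty$, by moving both derivatives onto $\varphi_{\epsilon}$. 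With this, $\tilde\P(A,B)_{ij} = \phi(A_{ij}, B_{ij})$ where
\[
    \phi(a,b) := \begin{cases} \dfrac{g(a) - g(b)}{a - b}, & a \ne b,\\[1ex] g'(b), & a = b,\end{cases}
\]
so it suffices to show that for each fixed $b$ the map $a \mapsto \phi(a,b)$ is Lipschitz on $\R$, ideally with a constant independent of $b$ (which is also what one expects to need later in the solvability argument).

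The key step is to bound $\partial_a \phi(a,b)$ for $a \ne b$. Differentiating the difference quotient gives
\[
    \partial_a \phi(a,b) = \frac{g'(a)(a-b) - \bigl(g(a) - g(b)\bigr)}{(a-b)^2},
\]
and then I would insert the second-order Taylor expansion $g(b) = g(a) + g'(a)(b-a) + \tfrac12 g''(\xi)(b-a)^2$ for some $\xi$ between $a$ and $b$. The first-order terms cancel exactly, leaving $\partial_a \phi(a,b) = \tfrac12 g''(\xi)$, hence $|\partial_a \phi(a,b)| \le \tfrac12 M_2$ for all $a \ne b$. Since $\phi(\cdot,b)$ is also continuous across $a = b$ (the difference quotient tends to $g'(b)$), a standard mean-value-theorem argument on $(-\infty,b]$ and $[b,\infty)$ then yields $|\phi(a,b) - \phi(a',b)| \le \tfrac12 M_2 \, |a - a'|$ for all $a, a' \in \R$, with $\tfrac12 M_2$ independent of $b$.

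Finally I would assemble the entrywise estimates in the Frobenius norm:
\[
    |\tilde\P(A,B) - \tilde\P(A',B)|_F^2 = \sum_{i,j=1}^d \bigl|\phi(A_{ij},B_{ij}) - \phi(A'_{ij},B_{ij})\bigr|^2 \le \tfrac14 M_2^2 \sum_{i,j=1}^d |A_{ij} - A'_{ij}|^2 = \tfrac14 M_2^2 \, |A - A'|_F^2,
\]
giving the Lipschitz bound with constant $\tfrac12 M_2 = \tfrac12 \|g''\|_{L^\infty}$. I expect the only genuine obstacle to be the apparent singularity of the difference quotient at $a = b$; the Taylor cancellation above is precisely what shows $\partial_a\phi(\cdot,b)$ stays bounded there, so this point is mild and the rest is routine bookkeeping. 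A cleaner alternative for the same step, if one prefers to avoid casework at $a=b$ entirely, is the representation $\phi(a,b) = \int_0^1 g'\bigl(b + t(a-b)\bigr)\,dt$, whose $a$-derivative is $\int_0^1 t\, g''\bigl(b+t(a-b)\bigr)\,dt$, again bounded by $\tfrac12 M_2$.
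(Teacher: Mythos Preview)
Your proof is correct and, in fact, a bit cleaner than the paper's. Both arguments ultimately bound the $a$-derivative of the scalar difference quotient $\phi(a,b)$, but they do so differently. The paper first verifies that $\phi(\cdot,b)$ is $C^1$ across $a=b$ (computing the derivative for $a\ne b$ via the quotient rule and a Taylor expansion, and matching it to $\tfrac12 g''(b)$ at $a=b$), and then argues that $\partial_a\phi(a,b)\to 0$ as $|a|\to\infty$ because $g=\T_R$ is bounded and $g'$ has compact support, so a continuous function decaying at infinity is bounded. You instead extract the exact Taylor remainder $\partial_a\phi(a,b)=\tfrac12 g''(\xi)$ in one line (or, even more cleanly, differentiate the integral representation $\phi(a,b)=\int_0^1 g'(b+t(a-b))\,dt$), which immediately yields the uniform bound $\tfrac12\|g''\|_{L^\infty}$ independent of $b$. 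That uniformity is a mild strengthening over the statement as written, and the integral representation sidesteps the $a=b$ casework entirely; the paper's route, on the other hand, makes the $C^1$ regularity of $\tilde\P(\cdot,B)$ explicit, which your argument does not need but also does not record.
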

\begin{proof}
Recall the definition of $\tilde\P$,
\[
    \tilde\P(Q, B)_{ij} = \begin{cases}
        \frac{\T_R(Q)_{ij} - \T_R(B)_{ij}}{Q_{ij} - B_{ij}} &\text{if } Q_{ij} \ne B_{ij}\\
        \frac{\partial\T_R(B)_{ij}}{\partial Q} &\text{if } Q_{ij} = B_{ij}.
    \end{cases}
\]
Clearly $\tilde\P$ is continuous. To show that $\tilde\P$ is Lipschitz, we first show that its derivative is continuously differentiable. For $Q_{ij}\ne B_{ij}$, since $\T_R$ is smooth, by the quotient rule we have 
\begin{align*}
    \frac{\partial\tilde\P(Q,B)_{ij}}{\partial Q_{ij}} 
    &= \frac{1}{Q_{ij} - B_{ij}}\left[\frac{\partial\T_R(Q)_{ij}}{\partial Q_{ij}} - \frac{\T_R(B)_{ij} - \T_R(Q)_{ij}}{B_{ij} - Q_{ij}}\right]\\
    &= \frac{1}{Q_{ij} - B_{ij}}\left[\frac{\partial\T_R(Q)_{ij}}{\partial Q_{ij}} - \frac{\partial\T_R(B)_{ij}}{\partial Q_{ij}} - \frac12 \frac{\partial^2\T_R(B)_{ij}}{\partial Q_{ij}^2}(Q_{ij} - B_{ij}) + o(Q_{ij} - B_{ij})\right]\\
    &= \frac{1}{Q_{ij} - B_{ij}}\left[\frac{\partial\T_R(Q)_{ij}}{\partial Q_{ij}} - \frac{\partial\T_R(B)_{ij}}{\partial Q_{ij}}\right] - \frac12\frac{\partial^2\T_R(B)_{ij}}{\partial Q_{ij}^2} + \frac{o(Q_{ij} - B_{ij})}{Q_{ij} - B_{ij}}.
\end{align*}
As $Q_{ij}\to B_{ij}$, we see this has limit $\frac12 \frac{\partial^2\T_R(B)_{ij}}{\partial Q_{ij}^2}$. Now applying the limit definition of the derivative and again writing Taylor approximations, we see that for $Q_{ij} = B_{ij}$, we also have
\[
    \frac{\partial\tilde\P(Q,B)_{ij}}{\partial Q_{ij}} = \frac12 \frac{\partial^2 \T_R(B)_{ij}}{\partial Q_{ij}^2}.
\]
Thus $\tilde\P$ is continuously differentiable since we assumed that $\mathcal{T}_R\in C^2(\R^d\times\R^d)$. Now using the fact that $\T_R$ is bounded with bounded derivatives, we see that $\tilde{\mathcal{P}}$ has bounded derivative and is therefore Lipschitz, as desired.
\end{proof}

We now prove that solutions to the scheme exist using the Leray-Schauder fixed point theorem.
\begin{theorem}[Existence of Solutions]
\label{thm:existence_of_solutions}
Assume $\tilde Q_h^n \in \mathbb{X}_h$ and $\tilde u_h^n \in \mathbb{Y}_h$ are given, and let $Q_h^n = \tilde Q_h^n + \tilde q_h$ and $u_h^n = \tilde u_h^n + \tilde g_h^n$. Assume that $\varepsilon_1 > |\varepsilon_2|R + |\varepsilon_3|$ and $\lvert\varepsilon_3\rvert < \frac{L}{12}$. Then the scheme (\ref{eq:fully_discrete_scheme_definition}) admits solutions for all $n=1,2,\dots,N$. 
\end{theorem}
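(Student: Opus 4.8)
### Proof Proposal

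The plan is to apply the Leray--Schauder fixed point theorem to the map $\mathcal{L}$ defined in~\eqref{eq:fixed_point_L_map}. Recall that $\mathcal{L}$ sends $\{Q_h^{n+1}, \tilde u_h^{n+1}\}$ to $\{\widehat Q_h^{n+1}, \widehat u_h^{n+1}\}$, where $\widehat u_h^{n+1}$ solves the linear elliptic problem~\eqref{eq:fixed_point_elliptic_definition} given $Q_h^{n+1}$, and $\widehat Q_h^{n+1}$ then solves the linear (in $\widehat Q_h^{n+1}$) problem~\eqref{eq:fixed_point_Q_equation}. The three things to verify are: (i) $\mathcal{L}$ is well defined, i.e.\ each of the two linear subproblems has a unique solution; (ii) $\mathcal{L}$ is continuous and compact (automatic here since we work on the finite-dimensional spaces $\mathbb{X}_h\times\mathbb{Y}_h$, so continuity suffices and compactness is free); and (iii) the set of $\widehat x$ satisfying $\widehat x = \lambda\,\mathcal{L}\widehat x$ for some $\lambda\in(0,1]$ is bounded, uniformly in $\lambda$. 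Then Leray--Schauder gives a fixed point of $\mathcal{L}$, which is precisely a solution of~\eqref{eq:fully_discrete_scheme_definition} at step $n+1$; induction on $n$ finishes the proof for all $n=1,\dots,N$.

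For (i): the elliptic problem~\eqref{eq:fixed_point_elliptic_definition} has bilinear form $\int_\Omega(\varepsilon_1\nabla\widehat u + \varepsilon_2\T_R(Q_h^{n+1})\nabla\widehat u)\cdot\nabla\psi_h\,dx$; this is coercive on $\mathbb{Y}_h$ because $|\T_R(Q_h^{n+1})|\le R$ pointwise and $\varepsilon_1 > 2|\varepsilon_2|(R+R^2) > |\varepsilon_2|R$, so by Lax--Milgram $\widehat u_h^{n+1}$ exists and is unique, with $\|\nabla\widehat u_h^{n+1}\|_{L^2}$ controlled by $\|\nabla\tilde g_h^{n+1}\|_{L^2}$ and $\|\div(Q_h^{n+1})\|_{L^2}\le\sqrt d\,\|\nabla Q_h^{n+1}\|_{L^2}$. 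With $\widehat u_h^{n+1}$ in hand, equation~\eqref{eq:fixed_point_Q_equation} is linear in $\widehat Q_h^{n+1}$ with principal part $\frac{1}{\Delta t}\int_\Omega\widehat Q_h^{n+1}:\Phi_h + \frac{ML}{2}\int_\Omega\nabla\widehat Q_h^{n+1}:\nabla\Phi_h$, which is coercive on $\mathbb{X}_h$; the remaining terms (the $\mathcal F_1,\mathcal F_2$ derivatives evaluated at the \emph{fixed} data $Q_h^{n+1},Q_h^n$, the $\varepsilon_2$-terms, and the $\varepsilon_3$-terms) are all affine in $\widehat Q_h^{n+1}$ and $\widehat u_h^{n+1}$ and hence bounded linear functionals of $\Phi_h$, so Lax--Milgram again applies. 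Continuity of $\mathcal{L}$ follows from continuity of $Q\mapsto\T_R(Q)$ (smooth) and $Q\mapsto\tilde\P(Q,Q_h^n)$ (Lipschitz, by Lemma~\ref{lem:PLipschitz}) together with continuous dependence of solutions of the linear systems on their coefficients and data.

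For (iii), the crucial a priori bound: suppose $\widehat x = \lambda\mathcal L\widehat x$ with $\lambda\in(0,1]$, i.e.\ $\lambda^{-1}\widehat x = \mathcal L\widehat x$. By the Trace-Free and Symmetry Preservation theorem, $\widehat Q_h^{n+1}$ is symmetric and trace-free, so $\tilde\P(\widehat Q_h^{n+1},Q_h^n)$ is symmetric and $|\tilde\P|\le 1$ entrywise by Lemma~\ref{lem:Pbound}, and $|\T_R(\widehat Q_h^{n+1})|\le R$. Testing~\eqref{eq:fixed_point_elliptic_definition} with $\psi_h=\widehat u_h^{n+1}$ and using $\varepsilon_1>|\varepsilon_2|R$ yields $\|\nabla\widehat u_h^{n+1}\|_{L^2}\le C(\|\nabla\tilde g_h^{n+1}\|_{L^2}+\|\nabla Q_h^{n+1}\|_{L^2})$, a bound in terms of the known data. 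Then testing~\eqref{eq:fixed_point_Q_equation} with $\Phi_h=\widehat Q_h^{n+1}$, using convexity of $\mathcal F_1$ (so that $\langle\partial\mathcal F_1(Q_h^{n+1})/\partial Q,\widehat Q_h^{n+1}\rangle$ appears only through known quantities since the argument of $\mathcal F_1$ is the fixed $Q_h^{n+1}$, not $\widehat Q_h^{n+1}$ --- note this term is in fact \emph{linear} in $\Phi_h$ here), Cauchy--Schwarz, the bounds $|\T_R|\le R$, $|\tilde\P|\le 1$, the already-established bound on $\|\nabla\widehat u_h^{n+1}\|_{L^2}$, and the assumed smallness $|\varepsilon_3|<L/(3C)$ to absorb the $\varepsilon_3\int\nabla\widehat u_h^{n+1}:\div\Phi_h$ term into the coercive $\frac{ML}{2}\|\nabla\widehat Q_h^{n+1}\|_{L^2}^2$ part (here $C$ is the constant in $\|\div\Phi_h\|_{L^2}\le C\|\nabla\Phi_h\|_{L^2}$), we obtain
\begin{equation*}
\frac{1}{\lambda\Delta t}\|\widehat Q_h^{n+1}\|_{L^2}^2 + \frac{ML}{4\lambda}\|\nabla\widehat Q_h^{n+1}\|_{L^2}^2 \le C\bigl(1 + \|\nabla Q_h^{n+1}\|_{L^2}^2 + \|\nabla Q_h^n\|_{L^2}^2 + \|\nabla\tilde g_h^{n+1}\|_{L^2}^2\bigr),
\end{equation*}
with the right side independent of $\lambda$ and $\widehat Q_h^{n+1}$. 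Since $\lambda\le 1$, this bounds $\|\widehat Q_h^{n+1}\|_{H^1}$ uniformly; combined with the bound on $\|\nabla\widehat u_h^{n+1}\|_{L^2}$ and the Poincar\'e inequality on $\mathbb{Y}_h$, the full set of such $\widehat x$ is bounded in $\mathbb{X}_h\times\mathbb{Y}_h$.

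The main obstacle is handling the $\varepsilon_3$ cross-term $M\varepsilon_3\int_\Omega\nabla\widehat u_h^{n+1}\cdot\div((\Phi_h)_S)\,dx$ in~\eqref{eq:fixed_point_Q_equation} when testing with $\Phi_h=\widehat Q_h^{n+1}$: this produces a term involving $\|\nabla\widehat Q_h^{n+1}\|_{L^2}$ that must be absorbed by the coercive elastic term, which is exactly why the hypothesis $|\varepsilon_3|<L/(3C)$ is imposed --- one must be careful to track the precise constant $C$ (coming from the inverse-type or trace estimate relating $\|\div\Phi_h\|_{L^2}$ to $\|\nabla\Phi_h\|_{L^2}$, really just $\|\div\Phi\|\le\sqrt d\|\nabla\Phi\|$) and to split the coercive $\frac{ML}{2}\|\nabla\widehat Q_h^{n+1}\|_{L^2}^2$ into a piece that dominates this cross-term after Young's inequality and a leftover positive piece. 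Everything else is routine Lax--Milgram and Leray--Schauder bookkeeping on a finite-dimensional space.
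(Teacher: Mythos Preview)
Your overall strategy (Leray--Schauder on $\mathcal L$) matches the paper, and your treatment of well-posedness in step~(i) is fine. The gap is in step~(iii), the a~priori bound. When you look at $\widehat x=\lambda\mathcal L\widehat x$, the \emph{input} to $\mathcal L$ is $\widehat x$ itself. So in~\eqref{eq:fixed_point_elliptic_definition} the source term $\varepsilon_3\div(Q_h^{n+1})$ becomes $\varepsilon_3\div(\widehat Q_h^{n+1})$, and in~\eqref{eq:fixed_point_Q_equation} the argument of $\partial\mathcal F_1/\partial Q$ is $\widehat Q_h^{n+1}$, and $\tilde\P(Q_h^{n+1},Q_h^n)=\tilde\P(\widehat Q_h^{n+1},Q_h^n)$. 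None of these are ``known data.'' Your claim that the elliptic test yields $\|\nabla\widehat u_h^{n+1}\|_{L^2}\le C(\|\nabla\tilde g_h^{n+1}\|_{L^2}+\|\nabla Q_h^{n+1}\|_{L^2})$ ``in terms of the known data'' is therefore circular: the right-hand side contains $\|\nabla\widehat Q_h^{n+1}\|_{L^2}$, exactly the quantity you are trying to bound. Likewise the $\mathcal F_1$ term is not linear here but cubic in $\widehat Q_h^{n+1}$ after testing with $\widehat Q_h^{n+1}$, and must be handled by the explicit sign computation (the paper shows $\langle\partial\mathcal F_1(\widehat Q_h^{n+1})/\partial Q,\lambda\widehat Q_h^{n+1}\rangle\ge M\lambda\beta_1\|\widehat Q_h^{n+1}\|_{L^2}^2-M\lambda|b||\Omega|$).

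There is also a second issue you gloss over: the $\varepsilon_2$ term after testing with $\widehat Q_h^{n+1}$ is $\int(\tilde\P(\widehat Q_h^{n+1},Q_h^n)\odot\nabla u_h^n(\nabla\widehat u_h^{n+1})^\top):\widehat Q_h^{n+1}$, a triple product of two unknowns and $\nabla u_h^n$. The bound $|\tilde\P|\le1$ alone does not close this; the paper instead uses the key identity $\tilde\P(\widehat Q_h^{n+1},Q_h^n)\odot(\widehat Q_h^{n+1}-Q_h^n)=\T_R(\widehat Q_h^{n+1})-\T_R(Q_h^n)$, which is bounded by $2R$, and splits $\widehat Q_h^{n+1}=(\widehat Q_h^{n+1}-Q_h^n)+Q_h^n$. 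To close the argument you must, as the paper does, add a multiple of the elliptic inequality to the $Q$-inequality so that the coupled $\|\nabla\widehat Q_h^{n+1}\|_{L^2}^2$ and $\|\nabla\widehat u_h^{n+1}\|_{L^2}^2$ terms appearing on both sides can be absorbed simultaneously; this is where the hypotheses $\varepsilon_1>\max\{2|\varepsilon_2|(R+R^2),2|\varepsilon_3|\}$ and $|\varepsilon_3|<L/(3C)$ are actually used together.
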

\begin{proof}
Consider the map $\hat x = \mathcal{L}x$ given by (\ref{eq:fixed_point_L_map}). We now show that $\mathcal{L}$ satisfies the hypotheses of the Leray-Schauder fixed point theorem to assert the existence of a solution.
To apply the Leray-Schauder theorem, we need to show that the mapping is well-posed, compact, continuous,  as a mapping from $\mathbb{X}_h\times \mathbb{Y}_h$ to $\mathbb{X}_h\times \mathbb{Y}_h$, and that the set
\begin{equation*}
	K:=\{(\widehat{Q}_h^{n+1},\widehat{u}_h^{n+1})\, |\, (\widehat{Q}_h^{n+1},\widehat{u}_h^{n+1})= \lambda \mathcal{L}(\tilde{Q}_h^{n+1},\tilde{u}_h^{n+1})\quad \text{for some }\ 0\leq \lambda \leq 1\}
\end{equation*}
is bounded~\cite[page 540]{Evans2010} (we also need $0\in \mathbb{X}_h\times \mathbb{Y}_h$, which is clearly true).

\begin{itemize}
    \item[--] \textbf{Well-posedness.} We must show that the operator $\mathcal{L}$ is well defined. We begin by solving the elliptic equation~\eqref{eq:fixed_point_elliptic_definition}. This equation has a unique solution for $\widehat{u}_h^{n+1}$ due to the Lax-Milgram theorem.
    The integral on the left hand side of~\eqref{eq:fixed_point_elliptic_definition} defines a bilinear form $B : H_0^1(\Omega)\times H_0^1(\Omega)\to \R$, which is continuous and coercive as long as $\epsilon_1> |\epsilon_2| R$, and the right hand side defines a bounded linear form $F:H_0^1(\Omega)\to\R$ since since $\tilde g \in H^1(\Omega)$. 
    Thus, the Lax-Milgram lemma yields that there is a unique solution $\widehat{u}_h^{n+1}$ to~\eqref{eq:fixed_point_elliptic_definition}. Given $\widehat{u}_h^{n+1}$, a similar argument shows that a unique $\widehat{Q}^{n+1}_h$ exists for~\eqref{eq:fixed_point_Q_equation}.

    \item[--] \textbf{Boundedness} Next, we bound solutions of $\mathcal L\widehat{x} = \lambda^{-1}\widehat{x}$ in terms of $\tilde Q_h^n, \tilde u_h^n,$ and given data uniformly over $\lambda\in(0,1]$. We begin by taking $\psi_h = \lambda\widehat{u}_h^{n+1}$ in the definition~\eqref{eq:fixed_point_elliptic_definition} of $\mathcal{L}\widehat{x} = \lambda^{-1}\widehat{x}$ and apply the Cauchy-Schwarz inequality and Young's inequality to obtain
    \begin{equation}
        \begin{split}
            0
                &= \int_\Omega (\varepsilon_1\lvert \nabla \widehat{u}_h^{n+1}\rvert^2 + \varepsilon_2 (\nabla \widehat{u}_h^{n+1})^\top \T_R(\overline{Q}_h^{n+1})\nabla \widehat{u}_h^{n+1})\, dx\\ 
                &+ \lambda\int_\Omega (\varepsilon_1 \nabla\tilde g_h^{n+1}\cdot\nabla\widehat{u}_h^{n+1} + \varepsilon_2(\nabla\widehat{u}_h^{n+1})^\top \T_R(\overline{Q}_h^{n+1})\nabla\tilde g_h^{n+1} + \varepsilon_3 \operatorname{div}(\overline{Q}_h^{n+1})\cdot\nabla\widehat{u}_h^{n+1})\, dx\\
                &\ge (\varepsilon_1 - R|\varepsilon_2|)\|\nabla\widehat{u}_h^{n+1}\|_{L^2}^2 - \frac{\varepsilon_1\lambda}{2\delta_1}\|\nabla\tilde g_h^{n+1}\|_{L^2}^2 - \frac{\varepsilon_1\delta_1\lambda}{2}\|\nabla\widehat{u}_h^{n+1}\|_{L^2}^2 - \frac{|\varepsilon_2|\lambda R^2}{2\delta_2}\|\nabla\tilde g_h^{n+1}\|_{L^2}^2\\ 
                &\hspace{10ex}- \frac{|\varepsilon_2|\delta_2\lambda}{2}\|\nabla\widehat{u}_h^{n+1}\|_{L^2}^2 - \frac{|\varepsilon_3|\lambda}{2}\|\operatorname{div}\overline{Q}_h^{n+1}\|_{L^2}^2 - \frac{|\varepsilon_3|\lambda}{2}\|\nabla\widehat{u}_h^{n+1}\|_{L^2}^2,
        \end{split}
    \end{equation}
    where $\delta_1,\delta_2>0$ are to be chosen and $\overline{Q}_h^{n+1} = \widehat{Q}_h^{n+1} + \tilde q_h$. Now noting that $\lambda\le 1$ and $\|\operatorname{div}(\overline{Q}_h^{n+1})\|_{L^2}^2 \le d\|\nabla\overline{Q}_h^{n+1}\|_{L^2}^2$, we have
    \begin{equation}
    \label{eq:leray_schauder_boundedness_u_inequality}
        \begin{split}
            0              &\ge \left(\varepsilon_1 - R|\varepsilon_2| - \frac{\varepsilon_1\delta_1}{2} - \frac{|\varepsilon_2|\delta_2}{2} - \frac{|\varepsilon_3|}{2}\right)\|\nabla\widehat{u}_h^{n+1}\|_{L^2}^2 - \frac{\varepsilon_1}{2\delta_1}\|\nabla\tilde g_h^{n+1}\|_{L^2}^2\\ 
                &\hspace{10ex} - \frac{|\varepsilon_2|R^2}{2\delta_2}\|\nabla\tilde g_h^{n+1}\|_{L^2}^2 - d|\varepsilon_3|\|\nabla\widehat{Q}_h^{n+1}\|_{L^2}^2 - d|\varepsilon_3|\|\nabla\tilde q_h\|_{L^2}^2\\
        \end{split}
    \end{equation}
    Now take $\Phi_h = \lambda\widehat{Q}_h^{n+1}$ in \eqref{eq:fixed_point_Q_equation}. Considering this term by term, we have
    \begin{equation}
    \begin{split}
        &-\int_\Omega \frac{\lambda^{-1}\widehat{Q}_h^{n+1} +\tilde q_h - Q_h^n}{\Delta t} : \lambda\widehat{Q}_h^{n+1}\, dx\\ 
        &\le -\frac{1}{2\Delta t}\int_\Omega\left(\lvert\widehat{Q}_h^{n+1}\rvert^2 - \lvert Q_h^n\rvert^2\right)\, dx + \frac{\delta_3}{2\Delta t}\int_\Omega \lvert \widehat{Q}_h^{n+1}\rvert^2\, dx + \frac{1}{2\delta_3\Delta t}\int_\Omega \lvert \tilde q_h\rvert^2\, dx,
    \end{split}
    \end{equation}
    where we have used $\lambda\le 1$ and $\delta_3>0$ is to be chosen. Similarly, we have
    \begin{equation}
    \begin{split}
        &\frac{L}{2}\int_\Omega (\nabla \lambda^{-1}\widehat{Q}_h^{n+1} + \nabla \tilde q_h + \nabla Q_h^n):\nabla\lambda\widehat{Q}_h^{n+1}\, dx\\ 
        &\ge \frac{L}{4}\int_\Omega\left(\lvert\nabla\widehat{Q}_h^{n+1}\rvert^2 - \lvert\nabla Q_h^n\rvert^2\right)\, dx - \frac{L\delta_4}{4}\|\nabla\widehat{Q}_h^{n+1}\|_{L^2}^2 - \frac{L}{4\delta_4}\|\nabla\tilde q_h\|_{L^2}^2,
    \end{split}
    \end{equation}
    where $\delta_4>0$ is to be chosen. Now consider
    \[
        \int_\Omega \left(\frac{\partial\mathcal{F}_1(\widehat Q_h^{n+1} + \tilde q_h)}{\partial Q} - \frac{\partial\mathcal{F}_2(Q_h^n)}{\partial Q}\right):\lambda\widehat{Q}_h^{n+1}\, dx.
    \]
    For the first term, we have
    \begin{equation}
    \begin{split}
    &\lambda\int_\Omega \frac{\partial\mathcal{F}_1(\widehat{Q}_h^{n+1} + \tilde q_h)}{\partial Q}:\widehat{Q}_h^{n+1}\, dx\\
    &= \lambda\beta_1\|\widehat{Q}_h^{n+1}+\tilde q_h\|_{L^2}^2 - \lambda b\int_\Omega \operatorname{tr}((\widehat{Q}_h^{n+1} + \tilde q_h)^3)\, dx + \lambda\beta_2\int_\Omega \operatorname{tr}((\widehat{Q}_h^{n+1} + \tilde q_h)^2)^2\, dx\\
    & \hspace{10ex}-  \lambda\int_\Omega \Bigg(\beta_1(\widehat{Q}_h^{n+1} + \tilde q_h) - b\left((\widehat{Q}_h^{n+1} + \tilde q_h)^2 - \frac{1}{d}\operatorname{tr}((\widehat{Q}^{n+1} + \tilde q_h)^2)I\right)\\ 
    &\hspace{25ex}+ \beta_2\operatorname{tr}((\widehat{Q}_h^{n+1} + \tilde q_h)^2)(\widehat{Q}_h^{n+1}+\tilde q_h)\Bigg):\tilde q_h\, dx.
    \end{split}
    \end{equation}
    Then
    \begin{equation}
    \begin{split}
        &-\lambda b\int_\Omega \operatorname{tr}((\widehat{Q}_h^{n+1} + \tilde{q}_h)^3)\, dx + \lambda\beta_2\int_\Omega \operatorname{tr}((\widehat{Q}_h^{n+1} + \tilde{q}_h)^2)^2\, dx\\
        &\hspace{10ex}-  \lambda\int_\Omega \Bigg(\beta_1(\widehat{Q}_h^{n+1} + \tilde q_h) - b\left((\widehat{Q}_h^{n+1} + \tilde q_h)^2 - \frac{1}{d}\operatorname{tr}((\widehat{Q}^{n+1} + \tilde q_h)^2)I\right)\\ 
    &\hspace{25ex}+ \beta_2\operatorname{tr}((\widehat{Q}_h^{n+1} + \tilde q_h)^2)(\widehat{Q}_h^{n+1}+\tilde q_h)\Bigg):\tilde q_h\, dx\\
    &\ge \lambda\beta_2\int_\Omega \lvert \widehat{Q}_h^{n+1} + \tilde q_h\rvert^4\, dx - \lambda\lvert b\rvert\int_\Omega \lvert \widehat{Q}_h^{n+1} + \tilde q_h\rvert^3\, dx - \lambda\beta_1\|\tilde q_h\|_{L^\infty}\|\widehat{Q}_h^{n+1}+ \tilde q_h\|_{L^1}\\ 
    &\hspace{10ex} - \lambda\lvert b\rvert \|\tilde q_h\|_{L^\infty}\|(\widehat{Q}_h^{n+1}+\tilde q_h)^2\|_{L^1} - \lambda\beta_2\|\tilde q_h\|_{L^\infty}\|\operatorname{tr}((\widehat{Q}_h^{n+1} + \tilde q_h)^2)(\widehat{Q}_h^{n+1} + \tilde q_h)\|_{L^1}\\
    &\ge \lambda\int_\Omega \left(\beta_2\lvert\widehat{Q}_h^{n+1} + \tilde q_h\rvert^4 - C\lvert\widehat{Q}_h^{n+1} + \tilde q_h\rvert^3 - C\lvert\widehat{Q}_h^{n+1} + \tilde q_h\rvert^2 - C\lvert\widehat{Q}_h^{n+1} + \tilde q_h\rvert\right)\, dx\\
    &= \lambda\int_\Omega \left( \beta_2\lvert\widehat{Q}_h^{n+1} + \tilde q_h\rvert^4 - C\lvert\widehat{Q}_h^{n+1} + \tilde q_h\rvert^3 - C\lvert\widehat{Q}_h^{n+1} + \tilde q_h\rvert^2 - C\lvert\widehat{Q}_h^{n+1} + \tilde q_h\rvert\right)\chi_{\{\lvert \widehat{Q}_h^{n+1} + \tilde q_h\rvert > C_1\}}\, dx\\
    &\hspace{5ex}+ \lambda\int_\Omega \left( \beta_2\lvert\widehat{Q}_h^{n+1} + \tilde q_h\rvert^4 - C\lvert\widehat{Q}_h^{n+1} + \tilde q_h\rvert^3 - C\lvert\widehat{Q}_h^{n+1} + \tilde q_h\rvert^2 - C\lvert\widehat{Q}_h^{n+1} + \tilde q_h\rvert\right)\chi_{\{\lvert \widehat{Q}_h^{n+1} + \tilde q_h\rvert \le C_1\}}\, dx\\
    &\ge \lambda\int_\Omega \left( \beta_2\lvert\widehat{Q}_h^{n+1} + \tilde q_h\rvert^4 - C\lvert\widehat{Q}_h^{n+1} + \tilde q_h\rvert^3 - C\lvert\widehat{Q}_h^{n+1} + \tilde q_h\rvert^2 - C\lvert\widehat{Q}_h^{n+1} + \tilde q_h\rvert\right)\chi_{\{\lvert \widehat{Q}_h^{n+1} + \tilde q_h\rvert \le C_1\}}\, dx\\
    &\ge -C\lambda\lvert\Omega\rvert,
    \end{split}
    \end{equation}
    where $\chi_E$ is the characteristic function for the set $E$, $C_1 = \max\{1, 3C, (3C)^{1/2}, (3C)^{1/3}\}/\beta_2$, and $C>0$ is a constant which may change from line to line.
    Now for the second term, we have using Cauchy-Schwarz and Young's inequality,
    \begin{equation}
    \begin{split}
        -\lambda \int_\Omega \frac{\partial\mathcal{F}_2(Q_h^n)}{\partial Q}:\widehat{Q}_h^{n+1}\, dx &
        \ge -\frac{\delta_5}{2} \left\|\frac{\partial\mathcal{F}_2(Q_h^n)}{\partial Q}\right\|_{L^2}^2 - \frac{1}{2\delta_5}\left\|\widehat{Q}_h^{n+1}\right\|_{L^2}^2\\
        &\geq -\frac{\delta_5}{2} \left(\lvert\beta_1-a\rvert\|Q_h^n\|_{L^2} + \lvert\beta_2 - c\rvert\|Q_h^n\|_{L^6}^3\right)^2 - \frac{1}{2\delta_5}\left\|\widehat{Q}_h^{n+1}\right\|_{L^2}^2,
    \end{split}
    \end{equation}
    where we have used $\lambda\le 1$ and where $\delta_5>0$ is to be chosen. Here we note that since $Q_h^n\in H_0^1(\Omega)$, we have that $Q_h^n \in L^6(\Omega)$ by the Sobolev embedding theorem. Thus, together we have
    \begin{equation}
    \begin{split}
        &\int_\Omega \left(\frac{\partial\mathcal{F}_1(\widehat Q_h^{n+1}+\tilde q_h)}{\partial Q} - \frac{\partial\mathcal{F}_2(Q_h^n)}{\partial Q}\right):\lambda\widehat{Q}_h^{n+1}\, dx\\ 
        &\ge - C\lvert\Omega\rvert -\frac{\delta_5}{2} \left(\lvert\beta_1-a\rvert\|Q_h^n\|_{L^2} + \lvert\beta_2 - c\rvert\|Q_h^n\|_{L^6}^3\right)^2 - \frac{1}{2\delta_5}\left\|\widehat{Q}_h^{n+1}\right\|_{L^2}^2,
    \end{split}
    \end{equation}
    where we have again used $\lambda\le 1$. Now we have, using Cauchy-Schwarz and Young's inequality,
    \begin{align*}
            &-\frac{\varepsilon_2}{2}\int_\Omega \left(\tilde\P(\widehat{Q}_h^{n+1}+\tilde q_h,Q^n)\odot\nabla u_h^n(\lambda^{-1}\nabla\widehat{u}_h^{n+1})^\top\right):\lambda\widehat{Q}_h^{n+1}\, dx\\
            &= -\frac{\varepsilon_2}{2} \int_\Omega \left(\tilde\P(\widehat{Q}_h^{n+1}+\tilde q_h,Q^n)\odot\nabla u_h^n(\nabla\widehat{u}_h^{n+1})^\top\right):(\widehat{Q}_h^{n+1}+\tilde q_h-Q_h^n+Q_h^n - \tilde q_h)\, dx\\
            &= -\frac{\varepsilon_2}{2}\int_\Omega (\T_R(\widehat{Q}_h^{n+1}+\tilde q_h) - \T_R(Q_h^n)):\nabla u_h^n(\nabla\widehat{u}_h^{n+1})^\top\, dx\\ 
            &\hspace{10ex}- \frac{\varepsilon_2}{2}\int_\Omega (\tilde \P(\widehat{Q}_h^{n+1}+\tilde q_h,Q_h^n)\odot\nabla u_h^n(\nabla\widehat{u}_h^{n+1})^\top):(Q_h^n-\tilde q_h)\, dx\\
            &\ge -\frac{R^2\lvert\varepsilon_2\rvert}{4\delta_6}\|\nabla u_h^n\|_{L^2}^2 - \frac{\lvert\varepsilon_2\rvert\delta_6}{4}\|\nabla\widehat{u}_h^{n+1}\|_{L^2}^2\\
            &\hspace{10ex}-\frac{\lvert\varepsilon_2\rvert}{4\delta_6}\|(\tilde\P(\widehat{Q}_h^{n+1}+\tilde q_h,Q_h^n)\odot (Q_h^n-\tilde q_h)) \nabla u_h^n\|_{L^2}^2 - \frac{\lvert\varepsilon_2\rvert\delta_6}{4}\|\nabla\widehat{u}_h^{n+1}\|_{L^2}^2\\
            & \geq -\frac{R^2\lvert\varepsilon_2\rvert}{4\delta_6}\|\nabla u_h^n\|_{L^2}^2 - \frac{\lvert\varepsilon_2\rvert\delta_6}{4}\|\nabla\widehat{u}_h^{n+1}\|_{L^2}^2 -\frac{\lvert\varepsilon_2\rvert\delta_6}{4}\|\nabla\widehat{u}_h^{n+1}\|_{L^2}^2 -
            \frac{\lvert\varepsilon_2\rvert \CTR}{4\delta_6}\|(Q_h^n-\tilde q_h) \nabla u_h^n\|_{L^2}^2,
    \end{align*}
    where $\delta_6>0$ is yet to be chosen and we have the assumptions on $\mathcal{T}_R$,~\eqref{eq:conditionsonTr} in the last line.
    Now we have
    \begin{equation}
        \begin{split}
            &-\frac{\varepsilon_2}{2}\int_\Omega (\tilde\P(\widehat{Q}_h^{n+1}+\tilde q_h,Q_h^n)\odot\nabla u_h^n(\nabla\tilde g_h^{n+1})^\top):\lambda\widehat{Q}_h^{n+1}\, dx\\
              &\ge -\frac{\lvert\varepsilon_2\rvert\CTR}{4\delta_7}\|\nabla\tilde g_h^{n+1}\|_{L^\infty}^2\|\nabla u_h^n\|_{L^2}^2 - \frac{\lvert\varepsilon_2\rvert\delta_7}{4}\|\widehat{Q}_h^{n+1}\|_{L^2}^2,
        \end{split}
    \end{equation}
    where $\delta_7>0$ is to be chosen.
    Note that $\nabla\tilde g_h^{n+1}$ is bounded because $\tilde g_h^{n+1}$ is piecewise linear on each $K\in\mathfrak{T}_h$. Next, we have with Cauchy-Schwarz and Young's inequality,
    \begin{equation}
        \begin{split}
            &-\varepsilon_3\int_\Omega \frac{\nabla\lambda^{-1}\widehat{u}_h^{n+1}+\nabla u_h^n}{2}\cdot\operatorname{div}(\lambda\widehat{Q}_h^{n+1})\, dx\\ 
            &\ge -\frac{\lvert\varepsilon_3\rvert}{4}\|\nabla\widehat{u}_h^{n+1}\|_{L^2}^2 - \frac{\lvert\varepsilon_3\rvert d}{4}\|\nabla \widehat{Q}_h^{n+1}\|_{L^2}^2 - \frac{\lvert \varepsilon_3\rvert}{4}\|\nabla u_h^n\|_{L^2}^2 - \frac{\lvert\varepsilon_3\rvert\lambda d}{4}\|\nabla \widehat{Q}_h^{n+1}\|_{L^2}^2,
        \end{split}
    \end{equation}
    where we have used that $\|\operatorname{div}(\widehat{Q}_h^{n+1})\|_{L^2}^2 \le d\|\nabla\widehat{Q}_h^{n+1}\|_{L^2}^2$. Using this inequality again along with Cauchy-Schwarz and Young's inequality, we have
    \begin{equation}
        \begin{split}
            -\frac{\varepsilon_3}{2} \int_\Omega \nabla\tilde g_h^{n+1}\cdot\operatorname{div}(\lambda\widehat{Q}_h^{n+1})\, dx &\ge -\frac{\lvert\varepsilon_3\rvert}{4\delta_8}\|\nabla\tilde g_h^{n+1}\|_{L^2}^2 - \frac{\lvert\varepsilon_3\rvert d\delta_8}{4}\|\nabla\widehat{Q}_h^{n+1}\|_{L^2}^2,
        \end{split}
    \end{equation}
    where $\delta_8>0$ is yet to be chosen. The remaining terms in (\ref{eq:fixed_point_L_map}) are zero because $\widehat{Q}_h^{n+1}$ is trace-free. Thus, combining all of these inequalities gives
    \begin{equation}
        \begin{split}
            &-\frac{1}{2\Delta t}\int_\Omega \left(\lvert\widehat{Q}_h^{n+1}\rvert^2 - \lvert Q_h^n\rvert^2\right)\, dx + \frac{\delta_3}{2\Delta t}\int_\Omega \lvert\widehat{Q}_h^{n+1}\rvert^2\, dx + \frac{1}{2\delta_3 \Delta t}\int_\Omega \lvert \tilde q_h\rvert^2\, dx\\
            &\ge \frac{L}{4}\int_\Omega \left(\lvert \nabla\widehat{Q}_h^{n+1}\rvert^2 - \lvert\nabla Q_h^n\rvert^2\right)\, dx - \frac{L\delta_4}{4}\|\nabla\widehat{Q}_h^{n+1}\|_{L^2}^2 - \frac{L}{4\delta_4}\|\nabla\tilde q_h\|_{L^2}^2 -C\lvert\Omega\rvert\\ 
            &\hspace{10ex} - \frac{\delta_5}{2}\left(\lvert\beta_1-a\rvert\|Q_h^n\|_{L^2} + \lvert\beta_2-c\rvert\|Q_h^n\|_{L^6}^3\right)^2 - \frac{1}{2\delta_5}\|\widehat{Q}_h^{n+1}\|_{L^2}^2 - \frac{R^2|\varepsilon_2|}{4\delta_6}\|\nabla u_h^n\|_{L^2}^2\\ 
            &\hspace{10ex}- \frac{|\varepsilon_2|\delta_6}{4}\|\nabla \widehat{u}_h^{n+1}\|_{L^2}^2 - \frac{|\varepsilon_2|\CTR}{4\delta_6}\|(Q_h^n - \tilde q_h)\nabla u_h^n\|_{L^2}^2 - \frac{|\varepsilon_2|\delta_6}{4}\|\nabla\widehat{u}_h^{n+1}\|_{L^2}^2\\
            &\hspace{10ex} - \frac{|\varepsilon_2|\CTR}{4\delta_7}\|\nabla\tilde g_h^{n+1}\|_{L^\infty}^2\|\nabla u_h^n\|_{L^2}^2 - \frac{|\varepsilon_2|\delta_7}{4}\|\widehat{Q}_h^{n+1}\|_{L^2} - \frac{|\varepsilon_3|}{4}\|\nabla\widehat{u}_h^{n+1}\|_{L^2}^2 - \frac{|\varepsilon_3|d}{4}\|\nabla\widehat{Q}_h^{n+1}\|_{L^2}^2\\ 
            &\hspace{10ex} - \frac{|\varepsilon_3|}{4}\|\nabla u_h^n\|_{L^2}^2 - \frac{|\varepsilon_3|d}{4}\|\nabla \widehat{Q}_h^{n+1}\|_{L^2}^2 - \frac{|\varepsilon_3|}{4\delta_8}\|\nabla\tilde g_h^{n+1}\|_{L^2}^2 - \frac{|\varepsilon_3|d\delta_8}{4}\|\nabla \widehat{Q}_h^{n+1}\|_{L^2}^2.
        \end{split}
    \end{equation}
    Adding $1/2$ times \eqref{eq:leray_schauder_boundedness_u_inequality} to this and combining like terms, we obtain
    \begin{equation}
        \begin{split}
            &\left(\frac{1}{2\Delta t} - \frac{\delta_3}{2\Delta t} - \frac{1}{2\delta_5} - \frac{|\varepsilon_2|\delta_7}{4}\right)\|\widehat{Q}_h^{n+1}\|_{L^2}^2\\ 
            &\hspace{2ex}+ \left(\frac{L}{4} - \frac{L\delta_4}{4} - \frac{2|\varepsilon_3|d}{4} - \frac{|\varepsilon_3|d\delta_8}{4} - \frac{d|\varepsilon_3|}{2}\right)\|\nabla\widehat{Q}_h^{n+1}\|_{L^2}^2\\
            &\hspace{2ex}+ \left(\frac{\varepsilon_1}{2} - \frac{R|\varepsilon_2|}{2} - \frac{\varepsilon_1\delta_1}{4} - \frac{|\varepsilon_2|\delta_2}{4} - \frac{2|\varepsilon_3|}{4} - \frac{2|\varepsilon_2|\delta_6}{4}\right)\|\nabla\widehat{u}_h^{n+1}\|_{L^2}^2\\
            &\le \frac{1}{2\Delta t}\|Q_h^n\|_{L^2}^2 + \frac{L}{4}\|\nabla Q_h^n\|_{L^2}^2 + \frac{\delta_5}{2}\left(|\beta_1 - a|\cdot\|Q_h^n\|_{L^2} + |\beta_2 - c|\cdot\|Q_h^n\|_{L^6}^3\right)^2\\
            &\hspace{2ex}+ \frac{|\varepsilon_2|\CTR}{4\delta_6}\|(Q_h^n - \tilde q_h)\nabla u_h^n\|_{L^2}^2 + \left(\frac{R^2 |\varepsilon_2|}{4\delta_6} + \frac{|\varepsilon_2|\CTR}{4\delta_7}\|\nabla\tilde g_h^{n+1}\|_{L^\infty}^2 + \frac{|\varepsilon_3|}{4}\right)\|\nabla u_h^n\|_{L^2}^2\\
            &\hspace{2ex}+ \left(\frac{|\varepsilon_3|}{4\delta_8} + \frac{\varepsilon_1}{4\delta_1} + \frac{|\varepsilon_2|R^2}{4\delta_2}\right)\|\nabla\tilde g_h^{n+1}\|_{L^2}^2 + \left(\frac{1}{2\delta_3\Delta t} + \frac{L}{4\delta_4} + \frac{d|\varepsilon_3|}{2}\right)\|\tilde q_h\|_{H^1}^2 + C|\Omega|.
        \end{split}
    \end{equation}
    Take $\delta_3 = 1/4$, $\delta_5 = 4\Delta t$, and $\delta_7 = (2(1+|\varepsilon_2|)\Delta t)^{-1}$. Further take $\delta_4 = (L/4 - 3|\varepsilon_3|)/L$ and $\delta_8 = (L/4 - 3|\varepsilon_3|)/(3(1+|\varepsilon_3|))$. Also take $\delta_1 = (3+3\varepsilon_1)^{-1}\cdot(\varepsilon_1 - R|\varepsilon_2|-|\varepsilon_3|)$, $\delta_2 = (3+3|\varepsilon_2|)^{-1}(\varepsilon_1 - R|\varepsilon_2|-|\varepsilon_3|)$, and $\delta_6 = (6+6|\varepsilon_2|)^{-1}(\varepsilon_1 - R|\varepsilon_2| - |\varepsilon_3|)$. Now by our assumptions on $\varepsilon_1, \varepsilon_2, \varepsilon_3$, we have $|\varepsilon_3| < L/12$ and $\varepsilon_1 > R|\varepsilon_2| + |\varepsilon_3|$. Thus we see that $\|\nabla\widehat{u}_h^{n+1}\|_{L^2}^2$, $\|\nabla \widehat Q_h^{n+1}\|_{L^2}^2$, and $\|\widehat{Q}_h^{n+1}\|_{L^2}^2$ are bounded uniformly in $\lambda$ and the previous time data, as desired.

    \item[--] \textbf{$\mathcal{L}$ maps bounded sets into bounded sets.} Suppose $\|\tilde Q_h^{n+1}\|_{H^1(\Omega)}, \|\tilde u_h^{n+1}\|_{H^1(\Omega)} < C$. Then letting $Q_h^{n+1} = \tilde Q_h^{n+1} + \tilde q_h$ and taking $\psi_h = \widehat{u}_h^{n+1}$ in~\eqref{eq:fixed_point_elliptic_definition}, we have
    \begin{align*}
        &\varepsilon_1\|\nabla\widehat{u}_h^{n+1}\|_{L^2}^2 + \varepsilon_2\int_\Omega (\nabla\widehat{u}_h^{n+1})^\top\T_R(Q_h^{n+1})\nabla\widehat{u}_h^{n+1}\, dx\\
        &\le \varepsilon_1\|\nabla\tilde g_h^{n+1}\|_{L^2}\|\nabla\widehat{u}_h^{n+1}\|_{L^2} + \lvert\varepsilon_2\rvert\|\T_R(Q_h^{n+1})\nabla\tilde g_h^{n+1}\|_{L^2}\|\nabla\widehat{u}_h^{n+1}\|_{L^2} + \lvert\varepsilon_3\rvert\|\operatorname{div}Q_h^{n+1}\|_{L^2}\|\nabla\widehat{u}_h^{n+1}\|_{L^2}
    \end{align*}
    which implies
    \[
        (\varepsilon_1 - \lvert\varepsilon_2\rvert R)\|\nabla\widehat{u}_h^{n+1}\|_{L^2} \le \varepsilon_1\|\nabla\tilde g_h^{n+1}\|_{L^2} + \lvert\varepsilon_2\rvert\|\T_R(Q_h^{n+1})\nabla\tilde g_h^{n+1}\|_{L^2} + \lvert\varepsilon_3\rvert\|\operatorname{div}Q_h^{n+1}\|_{L^2}.
    \]
    The right hand side is bounded since $\|\tilde Q_h^{n+1}\|_{H^1(\Omega)} \le C$ and $\|\tilde q_h\|_{H^1(\Omega)} < \infty$, so we see by the Poincar\'e inequality that $\|\widehat{u}_h^{n+1}\|_{H^1(\Omega)} \le C'$.
    
    Now note that $\tilde \P$ is bounded and $Q_h^n, \tilde Q_h^{n+1}, u_h^n,  \widehat{u}_h^{n+1}, \tilde q_h, \tilde g_h^{n+1}$ are piecewise linear and continuous. Thus, taking $\Phi_h = \widehat{Q}_h^{n+1}$ in~\eqref{eq:fixed_point_Q_equation} and applying Cauchy-Schwarz, we obtain
    \[
        \frac{1}{\Delta t}\|\widehat{Q}_h^{n+1}\|_{L^2}^2 + \frac{L}{2}\|\nabla\widehat{Q}_h^{n+1}\|_{L^2}^2 \le C(\|\widehat{Q}_h^{n+1}\|_{L^2} + \|\nabla\widehat{Q}_h^{n+1}\|_{L^2} + \|\operatorname{div}\widehat{Q}_h^{n+1}\|_{L^2}).
    \]
    It follows that
    \[
        C_1\|\widehat{Q}_h^{n+1}\|_{H^1}^2 \le C_2\|\widehat{Q}_h^{n+1}\|_{H^1}
    \]
    so that $\widehat{Q}_h^{n+1}$ is bounded in $H^1(\Omega)$, as desired.

    \item[--] \textbf{Continuity.} We now show that the operator $\mathcal{L}$ is continuous. Because $\mathcal{L}$ is independent of the input $\tilde u_h^{n+1}$, it suffices to show that $\mathcal{L}$ is continuous with respect to $\tilde Q_h^{n+1}$. So let $\{\tilde Q_k\}_k \subseteq \mathbb{X}_h$ be such that $\tilde Q_k \to \tilde Q$ in $H_0^1(\Omega)$. For each $k\in\mathbb{N}$, since $\mathcal{L}$ is well-posed, there are $(\widehat{Q}_k,\widehat{u}_k) = \mathcal{L}(\tilde Q_k,\tilde u_k)$, where $\tilde u_k\in \mathbb{Y}_h$ is arbitrary (since $\mathcal{L}$ is independent of this input). Let $(\widehat{Q}, \widehat{u}) = \mathcal{L}(Q, \tilde u)$ with $\tilde u \in \mathbb{Y}_h$ again arbitrary. Then to show $\mathcal{L}$ is continuous, it suffices to show that $\widehat{Q}_k\to \widehat{Q}$ and $\widehat{u}_k\to\widehat{u}$ in $H^1(\Omega)$.
    
    First, because $\mathcal{L}$ maps bounded sets into bounded sets, we have that $\nabla \widehat{u}_k$ is uniformly bounded in $L^2(\Omega)$ because the sequence $\{\tilde Q_k\}_k$ is bounded in $H^1(\Omega)$. Now subtract the elliptic equations corresponding to $\nabla\widehat{u}_k$ and to $\nabla\widehat{u}$ to obtain
    \begin{align*}
        &\int_\Omega (\varepsilon_1(\nabla\widehat{u}_k - \nabla\widehat{u}) + \varepsilon_2(\T_R(\tilde Q_k + \tilde q_h)\nabla\widehat{u}_k - \T_R(\tilde Q + \tilde q_h)\nabla\widehat{u}))\cdot\nabla\psi_h\, dx\\
        &= -\int_\Omega (\varepsilon_2(\T_R(\tilde Q_k + \tilde q_h) - \T_R(\tilde Q + \tilde q_h))\nabla\tilde g_h^{n+1} + \varepsilon_3(\operatorname{div}(\tilde Q_k + \tilde q_h) - \operatorname{div}(\tilde Q + \tilde q_h)))\cdot\nabla\psi_h\, dx.
    \end{align*}
    Then take $\psi_h = \nabla\widehat{u}_k - \nabla\widehat{u}$. We get
    \begin{align*}
        &\varepsilon_1\|\nabla\widehat{u}_k - \nabla\widehat{u}\|_{L^2}^2 + \varepsilon_2\underbrace{\int_\Omega (\nabla\widehat{u}_k-\nabla\widehat{u})^\top\T_R(\tilde Q_k + \tilde q_h)(\nabla\widehat{u}_k-\nabla\widehat{u})\, dx}_{I}\\ 
                &=                - \varepsilon_2 \underbrace{\int(\nabla\widehat{u}_k-\nabla\widehat{u})^\top (\T_R(\tilde Q_k + \tilde q_h)-\T_R(\tilde Q + \tilde q_h))\nabla\widehat{u}\, dx}_{II}\\
                &\quad  -\underbrace{\int_\Omega (\varepsilon_2(\T_R(\tilde Q_k + \tilde q_h) - \T_R(\tilde Q + \tilde q_h))\nabla\tilde g_h^{n+1} + \varepsilon_3(\operatorname{div}(\tilde Q_k + \tilde q_h) - \operatorname{div}(\tilde Q + \tilde q_h)))\cdot(\nabla\widehat{u}_k-\nabla\widehat{u})\, dx}_{III}.
    \end{align*}
    Note that $I$ is bounded by $\pm R\|\nabla\widehat{u}_k-\nabla\widehat{u}\|_{L^2}^2$. For $II$, because $\widehat{u} \in \mathbb{Y}_h$ (as $\mathbb{Y}_h$ is finite dimensional and thus closed), we have that $\nabla \widehat{u}$ is essentially bounded. Combining this with the fact that $\T_R$ is Lipschitz, that $\nabla\widehat{u}_k - \nabla \widehat{u}$ is uniformly bounded in $L^2(\Omega)$, and $\tilde Q_k\to \tilde Q$ in $L^2(\Omega)$, we have that $II\to 0$ as $k\to\infty$.
     For term $III$, using Cauchy-Schwarz, we obtain an upper bound of
    \begin{multline*}
      |III|\leq  \lvert\varepsilon_2\rvert \|(\T_R(\tilde Q_k + \tilde q_h) - \T_R(\tilde Q + \tilde q_h))\nabla\tilde g_h^{n+1}\|_{L^2}\|\nabla\widehat{u}_k - \nabla\widehat{u}\|_{L^2} \\
      + \lvert\varepsilon_3\rvert\|\operatorname{div}(\tilde Q_k + \tilde q_h) - \operatorname{div}(\tilde Q + \tilde q_h)\|_{L^2}\|\nabla\widehat{u}_k - \nabla \widehat{u}\|_{L^2}.
    \end{multline*}
    Using the fact that $\tilde g_h^{n+1}\in\mathbb{Y}_h$, we have that $\nabla\tilde g_h^{n+1}$ is essentially bounded. Combining this with the fact that $\T_R$ is Lipschitz, that $\tilde Q_k\to \tilde Q$ in $H^1(\Omega)$, and the fact that $\nabla\widehat{u}_k-\nabla\widehat{u}$ is uniformly bounded in $L^2(\Omega)$, we have that this upper bound goes to $0$ as $k\to\infty$.
    Thus we have
    \begin{equation*}
    \lim_{k\to \infty} (\varepsilon_1-R|\varepsilon_2|)\norm{\Grad \widehat{u}_k-\Grad\widehat{u}}_{L^2}^2\leq \lim_{k\to \infty} (|II|+|III|) = 0,
    \end{equation*}
    which implies that $\Grad\widehat{u}_k\to \Grad \widehat{u}$ as $k\to\infty$ since we are assuming that $\varepsilon_1>R|\varepsilon_2|$. 
By the Poincar\'e inequality, $\widehat{u}_k\to \widehat{u}$ in $H^1(\Omega)$, as desired.
    
    We must now show that $\widehat{Q}_k\to \widehat{Q}$ as $k\to\infty$. We have that $\{\widehat{Q}_k\}_k$ is uniformly bounded in $H^1(\Omega)$, as before. Now subtracting the equation in the operator defining $\widehat{Q}_k$ and $\widehat{Q}$, we obtain
    \begin{align*}
        &-\int_\Omega \frac{\widehat{Q}_k - \widehat{Q}}{\Delta t}:\Phi_h\, dx\\
        &= L\int_\Omega \frac{\nabla\widehat{Q}_k - \nabla\widehat{Q}}{2}:\nabla\Phi_h\, dx + \int_\Omega \left(\frac{\partial\mathcal{F}_1(\tilde Q_k + \tilde q_h)}{\partial Q} - \frac{\partial\mathcal{F}_1(\tilde Q + \tilde q_h)}{\partial Q}\right):\Phi_h\, dx\\
        &- \frac{\varepsilon_2}{2}\int_\Omega \left((\tilde \P(\tilde Q_k + \tilde q_h,Q_h^n)\odot \nabla u_h^n(\nabla\widehat{u}_k)^\top)_S - \frac1d\operatorname{tr}(\tilde \P(\tilde Q_k + \tilde q_h,Q_h^n)\odot \nabla u_h^n(\nabla\widehat{u}_k)^\top)I\right):\Phi_h\, dx\\
        &+ \frac{\varepsilon_2}{2}\int_\Omega \left((\tilde \P(\tilde Q + \tilde q_h,Q_h^n)\odot \nabla u_h^n(\nabla\widehat{u})^\top)_S - \frac1d\operatorname{tr}(\tilde \P(\tilde Q + \tilde q_h,Q_h^n)\odot \nabla u_h^n(\nabla\widehat{u})^\top)I\right):\Phi_h\, dx\\
        &- \frac{\varepsilon_2}{2}\int_\Omega \left((\tilde \P(\tilde Q_k + \tilde q_h,Q_h^n)\odot \nabla u_h^n(\nabla\tilde g_h^{n+1})^\top)_S - \frac1d\operatorname{tr}(\tilde \P(\tilde Q_k + \tilde q_h,Q_h^n)\odot \nabla u_h^n(\nabla\tilde g_h^{n+1})^\top)I\right):\Phi_h\, dx\\
        &+ \frac{\varepsilon_2}{2}\int_\Omega \left((\tilde \P(\tilde Q + \tilde q_h,Q_h^n)\odot \nabla u_h^n(\nabla\tilde g_h^{n+1})^\top)_S - \frac1d\operatorname{tr}(\tilde \P(\tilde Q + \tilde q_h,Q_h^n)\odot \nabla u_h^n(\nabla\tilde g_h^{n+1})^\top)I\right):\Phi_h\, dx\\
        &- \varepsilon_3 \int_\Omega \left(\frac{\nabla\widehat{u}_k - \nabla \widehat{u}}{2}\cdot\operatorname{div}((\Phi_h)_S) - \frac1d \frac{\nabla\widehat{u}_k - \nabla\widehat{u}}{2}\cdot\nabla\operatorname{tr}\Phi_h\right)\, dx.
    \end{align*}
    Now take $\Phi_h = \widehat{Q}_k - \widehat{Q}$. Using that $\widehat{Q}-\widehat{Q}_k$ is trace-free, all terms involving traces of $\Phi_h$ cancel. By Cauchy-Schwarz, we then have
    \begin{align*}
        &\frac{1}{\Delta t}\|\widehat{Q}_k - \widehat{Q}\|_{L^2}^2 + \frac{L}{2}\|\nabla\widehat{Q}_k - \nabla\widehat{Q}\|_{L^2}^2\\ 
        &\le \underbrace{\left\|\frac{\partial\mathcal{F}_1(\tilde Q_k + \tilde q_h)}{\partial Q} - \frac{\partial\mathcal{F}_1(\tilde Q + \tilde q_h)}{\partial Q}\right\|_{L^2}\|\widehat{Q}_k - \widehat{Q}\|_{L^2}}_{I}\\
        &+ \frac{\lvert\varepsilon_2\rvert}{2}\underbrace{\left\|(\tilde \P(\tilde Q_k + \tilde q_h,Q_h^n)\odot \nabla u_h^n(\nabla\widehat{u}_k)^\top)_S - (\tilde \P(\tilde Q + \tilde q_h,Q_h^n)\odot \nabla u_h^n(\nabla\widehat{u})^\top)_S\right\|_{L^2}}_{II}\|\widehat{Q}_k - \widehat{Q}\|_{L^2}\\
        &+ \frac{\lvert\varepsilon_2\rvert}{2}\underbrace{\left\|(\tilde \P(\tilde Q_k + \tilde q_h,Q_h^n)\odot \nabla u_h^n(\nabla\tilde g_h^{n+1})^\top)_S - (\tilde \P(\tilde Q + \tilde q_h,Q_h^n)\odot \nabla u_h^n(\nabla\tilde g_h^{n+1})^\top)_S\right\|_{L^2}}_{III}\|\widehat{Q}_k - \widehat{Q}\|_{L^2}\\
        &+ \frac{\lvert\varepsilon_3\rvert}{2}\underbrace{\left\|\nabla\widehat{u}_k - \nabla\widehat{u}\right\|_{L^2}\|\operatorname{div}(\widehat{Q}_k - \widehat{Q})\|_{L^2}}_{IV}.
    \end{align*}
    Now in what follows, we repeatedly use the fact that all of the norms above involving $\widehat{Q}_k$ and $\widehat{Q}$ are bounded by a constant uniformly in $k$ since $\widehat{Q}_k$ are uniformly bounded in $H^1(\Omega)$. With this fact and the fact that $\tilde Q_k\to \tilde Q$ in $H^1(\Omega)$ and thus also in $L^6(\Omega)$ by the Sobolev embedding theorem, we have
    \[
      I=  \left\|\frac{\partial\mathcal{F}_1(\tilde Q_k + \tilde q_h)}{\partial Q} - \frac{\partial\mathcal{F}_1(\tilde Q + \tilde q_h)}{\partial Q}\right\|_{L^2}\|\widehat{Q}_k - \widehat{Q}\|_{L^2} \to 0
    \]
    as $k\to\infty$. To bound $II$, we have
    \begin{align*}
        &\left\|(\tilde \P(\tilde Q_k + \tilde q_h,Q_h^n)\odot \nabla u_h^n(\nabla\widehat{u}_k)^\top)_S - (\tilde \P(\tilde Q + \tilde q_h,Q_h^n)\odot \nabla u_h^n(\nabla\widehat{u})^\top)_S\right\|_{L^2}\\
        &\le \left\|\tilde \P(\tilde Q_k + \tilde q_h,Q_h^n)\odot \nabla u_h^n(\nabla\widehat{u}_k)^\top - \tilde \P(\tilde Q_k + \tilde q_h,Q_h^n)\odot \nabla u_h^n(\nabla\widehat{u})^\top\right\|_{L^2}\\ 
        &\hspace{10ex}+ \left\|\tilde \P(\tilde Q_k + \tilde q_h,Q_h^n)\odot \nabla u_h^n(\nabla\widehat{u})^\top - \tilde \P(\tilde Q + \tilde q_h,Q_h^n)\odot \nabla u_h^n(\nabla\widehat{u})^\top\right\|_{L^2}\\
        &\le C\|\nabla u_h^n(\nabla\widehat{u}_k - \nabla \widehat{u})^\top\|_{L^2} + \|(\tilde\P(\tilde Q_k + \tilde q_h,Q_h^n) - \tilde\P(\tilde Q + \tilde q_h,Q_h^n))\odot \nabla u_h^n(\nabla\widehat{u})^\top\|_{L^2},
    \end{align*}
    where in the last inequality we used that $\tilde\P$ is bounded. Now use the fact that $\nabla u_h^n$ and $\nabla\widehat{u}$ are bounded to obtain the upper bound
    \begin{align*}
        &\left\|(\tilde \P(\tilde Q_k + \tilde q_h,Q_h^n)\odot \nabla u_h^n(\nabla\widehat{u}_k)^\top)_S - (\tilde \P(\tilde Q + \tilde q_h,Q_h^n)\odot \nabla u_h^n(\nabla\widehat{u})^\top)_S\right\|_{L^2} \\
        &\le C\|\nabla\widehat{u}_k - \nabla\widehat{u}\|_{L^2} + C\|\tilde\P(\tilde Q_k + \tilde q_h,Q_h^n) - \tilde\P(\tilde Q + \tilde q_h,Q_h^n)\|_{L^2}\\
        &\le C\|\nabla\widehat{u}_k - \nabla\widehat{u}\|_{L^2} + C'\|\tilde Q_k - \tilde Q\|_{L^2},
    \end{align*}
    where we have applied the Lipschitz property of $\tilde\P$. Then since $\widehat{u}_k \to \widehat{u}$ in $H^1(\Omega)$ as previously shown and $\tilde Q_k\to \tilde Q$ in $H^1(\Omega)$, we have $II\to 0$
  as $k\to\infty$. By similar computations, we have $III\to 0$ as $k\to\infty$. 
    We also see that
    \[
    IV= \frac{\lvert\varepsilon_3\rvert}{2}\left\|\nabla\widehat{u}_k - \nabla\widehat{u}\right\|_{L^2}\|\operatorname{div}(\widehat{Q}_k - \widehat{Q})\|_{L^2}
    \to 0
    \]
    since $\nabla\widehat{u}_k \to \nabla\widehat{u}$ in $L^2$, as previously shown.
      Combining all of the limits we have computed gives
    \[
        \frac{1}{\Delta t}\|\widehat{Q}_k - \widehat{Q}\|_{L^2}^2 + \frac{L}{2}\|\nabla\widehat{Q}_k - \nabla\widehat{Q}\|_{L^2}^2 \to 0
    \]
    as $k\to\infty$. In particular, $\widehat{Q}_k \to \widehat{Q}$ in $H^1(\Omega)$, and as previously computed, we also have $\widehat{u}_k\to \widehat{u}$. This shows that $\mathcal{L}$ is continuous, as desired.
    
    \item[--] \textbf{Compactness} Compactness of the operator $\mathcal{L}$ follows since it is continuous and maps bounded sets into bounded sets in finite dimensional space.
\end{itemize}
Thus, we can apply the Leray-Schauder fixed point theorem to conclude the proof.
\end{proof}

\begin{remark}[General elastic constants]
	In general, the elastic energy density takes the form 
	\begin{equation*}
		\mathcal{F}_E(Q) = \frac{L_1}{2}|\Grad Q|^2 + \frac{L_2}{2}|\Div Q|^2 +\frac{L_3}{2}\sum_{i,j,k=1}^d \partial_i Q_{jk}\partial_k Q_{ji}.
	\end{equation*}
	The additional terms are quadratic and nonnegative.
	The variational derivative for this energy term is
	\begin{equation*}
		\left(\frac{\partial \mathcal{F}_E(Q)}{\partial Q}\right)_{ij} = L_1 \Delta Q_{ij} + \frac{L_2+L_3}{2} \left(\sum_{k=1}^d\left( \partial_{ik}Q_{jk} + \partial_{jk}Q_{ik}\right) - \frac{2}{d} \sum_{k,s=1}^d \partial_{ks}Q_{ks}\, \delta_{ij}\right).
	\end{equation*}
	We note that the terms related to the elastic constants $L_2$ and $L_3$ are linear. Therefore, they do not cause any additional difficulty in the discretization and stability and well-posedness analysis of the scheme. In fact, one could for example discretize them as in~\cite{Hirsch2024}, where stability and convergence of a liquid crystal model with inertia is proved. Using the discretization proposed there, one would obtain stability, well-posedness and convergence (as proved in the upcoming section), for this model too. We chose to omit the $L_2$ and $L_3$-term to improve the readability of the paper, since the treatment is very similar to the $L_1$-term.	
\end{remark}
\section{Convergence}\label{sec:conv}
We now show that up to a subsequence, the scheme \eqref{eq:fully_discrete_scheme_definition} converges to a weak solution of \eqref{eq:weak_formulation} when $\varepsilon_3=0$ and under some conditions for $\varepsilon_1$ and $\varepsilon_2$ (that were needed for stability and well-posedness in the previous section). To do so, we define piecewise constant interpolants in time:
\begin{subequations}
    \begin{equation}
        Q_{h,\Delta t}(t, x) = \sum_{n=0}^{N-1} Q_{h}^n(x)\chi_{S_n}(t),
    \end{equation}
    \begin{equation}
        u_{h,\Delta t}(t, x) = \sum_{n=0}^{N-1} u_{h}^n(x)\chi_{S_n}(t),
    \end{equation}
\end{subequations}
where $\chi_A$ is the characteristic function of a set $A$ and $S_n = [n\Delta t, (n+1)\Delta t)$. We will need the following version of the Aubin-Lions-Simon lemma to conclude pre-compactness of $\{Q_{h,\Delta t}\}$:

\begin{lemma}[Pre-Compactness of $\{Q_{h,\Delta t}\}$]
\label{conj:precompactness}
Assume that $\{Q_{h,\Delta t}\}\subset L^\infty(0,T; H^1(\Omega))$ and $\{D_t^+ Q_{h,\Delta t}\}\subset L^2([0,T]\times\Omega)$ uniformly in $\Delta t, h>0$ sufficiently small. Then the sequence $\{Q_{h,\Delta t}\}$ is precompact in $L^2([0,T]\times\Omega)$, that is,
\[
    Q_{h,\Delta t} \to Q\quad\text{in } L^2([0,T]\times\Omega)
\]
up to a subsequence.
\end{lemma}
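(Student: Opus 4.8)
The idea is to reduce to the classical Aubin--Lions--Simon compactness theorem. The obstruction to applying it directly is that $Q_{h,\Delta t}$ is piecewise constant in time and therefore has no distributional time derivative lying in any $L^2$ space; we circumvent this by passing to the piecewise \emph{linear} (in time) interpolant. Define $\widehat Q_{h,\Delta t}:[0,T]\times\Omega\to\R^{d\times d}$ by setting, on each subinterval $S_n=[t^n,t^{n+1})$,
\[
    \widehat Q_{h,\Delta t}(t,x) := (1-\tfrac{t-t^n}{\Delta t})\,Q_h^n(x) + \tfrac{t-t^n}{\Delta t}\,Q_h^{n+1}(x) = Q_h^n(x) + (t-t^n)\,D_t^+Q_h^n(x).
\]
Then $\partial_t\widehat Q_{h,\Delta t}=D_t^+Q_h^n$ on $S_n$, so $\partial_t\widehat Q_{h,\Delta t}=D_t^+Q_{h,\Delta t}$ is bounded in $L^2([0,T]\times\Omega)$ by hypothesis; and since on $S_n$ the value $\widehat Q_{h,\Delta t}(t,\cdot)$ is a convex combination of $Q_h^n$ and $Q_h^{n+1}$, the uniform bound on $\{Q_{h,\Delta t}\}$ in $L^\infty(0,T;H^1(\Omega))$ transfers to $\widehat Q_{h,\Delta t}$. (The only value not directly covered by that bound is the endpoint $Q_h^N$, but $\|Q_h^N\|_{H^1(\Omega)}\le\|Q_h^{N-1}\|_{H^1(\Omega)}+\Delta t^{1/2}\|D_t^+Q_{h,\Delta t}\|_{L^2([0,T]\times\Omega)}$ is likewise bounded, and alternatively it is controlled by Theorem~\ref{thm:fully_discrete_energy_stability}.) Hence $\{\widehat Q_{h,\Delta t}\}$ is bounded in $L^\infty(0,T;H^1(\Omega))$ uniformly in $h,\Delta t$.

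First I would estimate the difference between the two interpolants. On $S_n$ one has $Q_{h,\Delta t}-\widehat Q_{h,\Delta t}=-(t-t^n)D_t^+Q_h^n$, whence
\begin{align*}
    \|Q_{h,\Delta t}-\widehat Q_{h,\Delta t}\|_{L^2([0,T]\times\Omega)}^2
    &= \sum_{n=0}^{N-1}\frac{\Delta t^3}{3}\,\|D_t^+Q_h^n\|_{L^2(\Omega)}^2\\
    &= \frac{\Delta t^2}{3}\,\|D_t^+Q_{h,\Delta t}\|_{L^2([0,T]\times\Omega)}^2,
\end{align*}
which tends to $0$ as $\Delta t\to0$ because $\|D_t^+Q_{h,\Delta t}\|_{L^2}$ is bounded uniformly. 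Next I would apply the Aubin--Lions--Simon lemma to $\{\widehat Q_{h,\Delta t}\}$: since $\Omega$ is bounded with Lipschitz boundary, $H^1(\Omega)$ is compactly embedded in $L^2(\Omega)$ by Rellich--Kondrachov, and $\{\widehat Q_{h,\Delta t}\}$ is bounded in $L^\infty(0,T;H^1(\Omega))$ with $\{\partial_t\widehat Q_{h,\Delta t}\}$ bounded in $L^2(0,T;L^2(\Omega))$; hence $\{\widehat Q_{h,\Delta t}\}$ is relatively compact in $L^2(0,T;L^2(\Omega))=L^2([0,T]\times\Omega)$. Passing to a subsequence, $\widehat Q_{h,\Delta t}\to Q$ in $L^2([0,T]\times\Omega)$ for some $Q$, and then by the previous estimate $\|Q_{h,\Delta t}-Q\|_{L^2}\le\|Q_{h,\Delta t}-\widehat Q_{h,\Delta t}\|_{L^2}+\|\widehat Q_{h,\Delta t}-Q\|_{L^2}\to0$ along the same subsequence, which is the assertion.

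The only genuinely delicate point is the one already flagged: the lack of time regularity of the piecewise constant family forces the detour through $\widehat Q_{h,\Delta t}$, and the fact that makes the detour harmless is the $O(\Delta t^2)$ bound above, which in turn is exactly what the uniform $L^2$ bound on $D_t^+Q_{h,\Delta t}$ buys us. An equivalent, more hands-on alternative would be to verify the Fréchet--Kolmogorov--Riesz criterion directly for $\{Q_{h,\Delta t}\}$: uniform smallness of spatial translates $\|Q_{h,\Delta t}(\cdot,\cdot+\xi)-Q_{h,\Delta t}\|_{L^2}$ follows from the uniform $H^1$ bound, while uniform smallness of temporal translates follows from telescoping, $Q_h^m-Q_h^n=\Delta t\sum_{k=n}^{m-1}D_t^+Q_h^k$, together with Cauchy--Schwarz over the at most $\lceil\tau/\Delta t\rceil+1$ intervals separating two times at distance $\tau$; but the interpolation argument is cleaner and I would use it.
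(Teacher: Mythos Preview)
Your proof is correct and follows essentially the same strategy as the paper: introduce the piecewise linear time interpolant $\widehat Q_{h,\Delta t}$, apply an Aubin--Lions--Simon type compactness result to it, and then show $\|Q_{h,\Delta t}-\widehat Q_{h,\Delta t}\|_{L^2}\to 0$ using the uniform $L^2$ bound on $D_t^+Q_{h,\Delta t}$. The only cosmetic difference is that the paper verifies the two hypotheses of Simon's Theorem~1 (relative compactness of $\int_{t_1}^{t_2}\widehat Q_{h,\Delta t}\,dt$ in $L^2(\Omega)$ and uniform smallness of time-translates) by hand, whereas you invoke the classical Aubin--Lions form directly via the bound on $\partial_t\widehat Q_{h,\Delta t}$; both routes are valid and the interpolant-difference estimate is identical.
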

\begin{proof}
We define piecewise linear interpolations in time:
\[
    \widehat{Q}_{h,\Delta t}(t, x) = \sum_{n=0}^{N-1} \chi_{S_n}(t)\left(Q_h^n(x) + \frac{Q_h^{n+1}(x) - Q_h^n(x)}{\Delta t}(t - n\Delta t)\right),
\]
where $S_n=[n\Delta t,(n+1)\Delta t)$. Then if $0 < t_1 < t_2 < T$, we have (omitting spatial dependence)
\begin{align*}
    \left\|\int_{t_1}^{t_2} \widehat{Q}_{h,\Delta t}(t)\, dt\right\|_{H^1(\Omega)} \le \int_{t_1}^{t_2} \|\widehat{Q}_{h,\Delta t}(t)\|_{H^1(\Omega)}\, dt \le 3\Delta t\sum_{n=0}^{N}\|Q_h^n\|_{H^1(\Omega)} \le C,
\end{align*}
where $C>0$ is some constant and where the last inequality follows since $\{Q_{h,\Delta t}\}\subset L^\infty(0,T;H^1(\Omega))$. Now $H^1(\Omega)$ is compactly embedded in $L^2(\Omega)$, so the set $\left\{\int_{t_1}^{t_2} \widehat{Q}_{h,\Delta t}(t)\, dt\right\}_{(h,\Delta t)}$ is relatively compact in $L^2(\Omega)$ for all $0<t_1<t_2<T$. 
Now 
\begin{align*}
    \int_0^{T-\Delta t} \lvert \widehat{Q}_{h,\Delta t}(t+\Delta t) - \widehat{Q}_{h,\Delta t}(t)\rvert^2\, dt
    &\le \int_0^{T-\Delta t} \left(\int_{t}^{t+\Delta t} \lvert \partial_s\widehat{Q}_{h,\Delta t}(s)\rvert\, ds\right)^2\, dt\\
    &\le \Delta t\int_0^{T-\Delta t} \int_t^{t+\Delta t}\lvert \partial_s \widehat{Q}_{h,\Delta t}(s)\rvert^2\, ds\, dt\\
    &\le C\Delta t,
\end{align*}
where the second inequality is H\"{o}lder's inequality, and the last inequality follows since $\{D_t^+ Q_{h,\Delta t}\}\subset L^2([0,T]\times\Omega)$ uniformly in $\Delta t, h>0$. Thus, we have that $\| \widehat{Q}_{h,\Delta t}(\cdot+\Delta t) - \widehat{Q}_{h,\Delta t}(\cdot)\|_{L^2(0,T-\Delta t; L^2(\Omega))} \to 0$ as $\Delta t\to 0$, uniformly in $h>0$. It follows from Theorem 1 in~\cite{Simon1986} that the family $\{\widehat{Q}_{h,\Delta t}\}_{(h,\Delta t)}$ is pre-compact in $L^2([0,T]\times\Omega)$. 

We now show that $\widehat{Q}_{h,\Delta t}$ and $Q_{h,\Delta t}$ have the same limit. We have
\begin{align*}
    \left\|\widehat{Q}_{h,\Delta t} - Q_{h,\Delta t}\right\|_{L^2([0,T]\times\Omega)}
        &= \left\|\sum_{n=0}^{N-1}\chi_{S_n}(t)\frac{Q_h^{n+1}(x) - Q_h^n(x)}{\Delta t}(t - n\Delta t)\right\|_{L^2([0,T]\times\Omega)}\\
        &\le \Delta t\left(\int_0^T\int_\Omega \left\lvert \sum_{n=0}^{N-1}\chi_{S_n} D_t^+ Q_h^n\right\rvert^2\, dx\, dt\right)^{1/2}\\
        &= \Delta t\left(\Delta t\sum_{n=0}^{N-1}\int_\Omega \lvert D_t^+ Q_h^n\rvert^2\, dx\right)^{1/2}\\
        &\le C\Delta t,
\end{align*}
where we have used Corollary~\ref{corollary:fully_discrete_DtQ_L2_bound}. So as $\Delta t\to 0$, we see that $\widehat{Q}_{h,\Delta t}$ and $Q_{h,\Delta t}$ have the same limits. Thus, $\{Q_{h,\Delta t}\}_{(h,\Delta t)}$ is precompact in $L^2([0,T]\times\Omega)$, as desired.
\end{proof}

Using this lemma and assuming some restrictions on $\varepsilon_1,\varepsilon_2,$ and $\varepsilon_3$, we now show that the scheme converges to a weak solution as the mesh is refined.

\begin{theorem}[Convergence to Weak Solutions]
Assume that  $\Delta t< \frac{\varepsilon_1 - |\varepsilon_2|R}{3(\varepsilon_1+|\varepsilon_2|R^2)}$, that $\varepsilon_1 > \lvert\varepsilon_2\rvert R$, and $\varepsilon_3=0$. Then the sequence $\{(Q_{h,\Delta t}, u_{h,\Delta t})\}$ of approximations computed by the scheme~\eqref{eq:fully_discrete_scheme_definition} converge up to a subsequence to a weak solution of~\eqref{eq:weak_formulation}.
\end{theorem}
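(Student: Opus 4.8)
The plan is to pass to the limit in the fully discrete scheme along a subsequence of the time-interpolants, exploiting the uniform bounds already established, and to verify that the limit is a weak solution in the sense of Definition~\ref{def:weaksol}.

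First I would collect the a priori bounds. Since $\varepsilon_3=0$ and $\varepsilon_1>2|\varepsilon_2|(R+R^2)$, Theorem~\ref{thm:fully_discrete_energy_stability} and Corollary~\ref{corollary:fully_discrete_DtQ_L2_bound} apply, so (using the hypotheses on $\tilde g$) $\{Q_{h,\Delta t}\}$ and $\{u_{h,\Delta t}\}$ are bounded in $L^\infty(0,T;H^1(\Omega))$ and $\{D_t^+Q_{h,\Delta t}\}$ in $L^2([0,T]\times\Omega)$. Extracting a subsequence, one obtains $Q_{h,\Delta t}\overset{*}{\rightharpoonup}Q$ in $L^\infty(0,T;H^1_0)$, $D_t^+Q_{h,\Delta t}\rightharpoonup Q_t$ in $L^2$, $\tilde u_{h,\Delta t}\overset{*}{\rightharpoonup}\tilde u$ in $L^\infty(0,T;H^1_0)$, and, from Lemma~\ref{conj:precompactness}, strong convergence $Q_{h,\Delta t}\to Q$ in $L^2([0,T]\times\Omega)$. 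Since $\|Q_{h,\Delta t}(\cdot+\Delta t)-Q_{h,\Delta t}\|_{L^2}=\Delta t\big(\Delta t\sum_n\|D_t^+Q_h^n\|_{L^2}^2\big)^{1/2}=O(\Delta t)$, the shifted interpolant $\sum_nQ_h^{n+1}\chi_{S_n}$ and the midpoint interpolant $\sum_nQ_h^{n+1/2}\chi_{S_n}$ share the limit $Q$; interpolating the strong $L^2$ convergence against the uniform $L^\infty(0,T;L^6)$ bound (valid since $d\le3$) upgrades all three to strong convergence in $L^p([0,T]\times\Omega)$ for every $p<6$. Weak continuity of the trace operator keeps $Q\in L^\infty(0,T;H^1_0)$, symmetry and trace-freeness of $Q$ pass to the limit, and I set $u:=\tilde u+\tilde g$; the required regularity $Q\in L^\infty(0,T;H^1_0)$, $Q_t\in L^2$, $\tilde u\in L^\infty(0,T;H^1_0)$ is then immediate.

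The delicate point — the step I expect to be the main obstacle — is the coupling term, which is \emph{quadratic} in $\nabla u$: passing to the limit there needs strong convergence of $\nabla u_{h,\Delta t}$, not merely the weak convergence the energy bound supplies. My plan: set $\mathcal{A}_{h,\Delta t}:=\varepsilon_1 I+\varepsilon_2\mathcal{T}_R(Q_{h,\Delta t})$, which is symmetric, uniformly bounded, uniformly positive definite ($\mathcal{A}_{h,\Delta t}\ge(\varepsilon_1-|\varepsilon_2|R)I>0$ by the parameter restriction), and satisfies $\mathcal{A}_{h,\Delta t}\to\mathcal{A}:=\varepsilon_1I+\varepsilon_2\mathcal{T}_R(Q)$ strongly in every $L^p$, $p<\infty$ (boundedly), because $\mathcal{T}_R$ is bounded and Lipschitz. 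Then (i) pass to the limit in \eqref{eq:ufullydiscrete} using weak-$\nabla u_{h,\Delta t}$, strong-$\mathcal{A}_{h,\Delta t}$, and density of $\bigcup_h\mathbb{Y}_h$ in $H^1_0$, obtaining the limit equation $\int_0^T\int_\Omega\mathcal{A}\nabla u\cdot\nabla\psi=0$, i.e.\ \eqref{eq:div_weak_form} with $\varepsilon_3=0$, valid by density for $\psi\in L^2(0,T;H^1_0)$; (ii) test \eqref{eq:ufullydiscrete} with $\psi_h=\tilde u_h^n$, sum in $n$ with weight $\Delta t$, to get $\int_0^T\int_\Omega\mathcal{A}_{h,\Delta t}\nabla u_{h,\Delta t}\cdot\nabla u_{h,\Delta t}=\int_0^T\int_\Omega\mathcal{A}_{h,\Delta t}\nabla u_{h,\Delta t}\cdot\nabla\tilde g_{h,\Delta t}$, let $h,\Delta t\to0$ on the right (weak $\times$ strong, using $\nabla\tilde g_{h,\Delta t}\to\nabla\tilde g$ in $L^2$), and compare with the limit equation tested against $\tilde u=u-\tilde g$ to conclude $\int_0^T\int_\Omega\mathcal{A}_{h,\Delta t}\nabla u_{h,\Delta t}\cdot\nabla u_{h,\Delta t}\to\int_0^T\int_\Omega\mathcal{A}\nabla u\cdot\nabla u$; (iii) expand $\int_0^T\int_\Omega\mathcal{A}_{h,\Delta t}\nabla(u_{h,\Delta t}-u)\cdot\nabla(u_{h,\Delta t}-u)$, in which every term converges (using $\mathcal{A}_{h,\Delta t}\nabla u\to\mathcal{A}\nabla u$ strongly and $\nabla u_{h,\Delta t}\rightharpoonup\nabla u$), to $0$; coercivity of $\mathcal{A}_{h,\Delta t}$ then forces $\nabla u_{h,\Delta t}\to\nabla u$ strongly in $L^2([0,T]\times\Omega)$, and continuity of translations gives the same for the shifted interpolant $\sum_n\nabla u_h^{n+1}\chi_{S_n}$.

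With this in hand, the remaining step is routine. Take smooth $\Phi$ compactly supported in $[0,T)\times\Omega$, let $\Phi_h^n$ be the nodal interpolant of $\Phi(t^n,\cdot)$ onto $\mathbb{X}_h$, use it in \eqref{eq:Qfullydiscrete}, multiply by $\Delta t$, and sum over $n$. Abel summation converts the left side into $\int_\Omega Q_h^0:\Phi_h^0+\sum_{n=1}^{N-1}\int_\Omega Q_h^n:(\Phi_h^n-\Phi_h^{n-1})$ (the endpoint term vanishes since $\Phi$ is supported away from $t=T$), which, $\Phi$ being smooth, tends to $\int_\Omega Q_0:\Phi(0,\cdot)\,dx+\int_0^T\int_\Omega Q:\Phi_t\,dx\,dt$, matching the left side of \eqref{eq:weak_formulation}. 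On the right: the elastic term converges by weak ($\nabla$ of the midpoint interpolant) times strong ($\nabla\Phi_{h,\Delta t}\to\nabla\Phi$); the bulk term converges since $\partial\mathcal{F}_1/\partial Q,\partial\mathcal{F}_2/\partial Q$ are cubic polynomials and $Q_{h,\Delta t}$ and its shift converge strongly in $L^p$, $p<6$, so $\partial\mathcal{F}_1(Q_h^{n+1})/\partial Q-\partial\mathcal{F}_2(Q_h^n)/\partial Q\to\partial\mathcal{F}_B(Q)/\partial Q$ in $L^2$; the $\varepsilon_3$-terms are absent; and for the electric term, Lemma~\ref{lem:PLipschitz} together with a.e.\ convergence of $Q_h^{n+1},Q_h^n$ gives $\tilde\P_h^n=\tilde\P(Q_h^{n+1},Q_h^n)\to\tilde\P(Q,Q)=\P(Q)$ a.e., while Lemma~\ref{lem:Pbound} supplies the domination to get $\tilde\P_h^n\to\P(Q)$ in every $L^p$, $p<\infty$; combined with the strong $L^2$ convergence of $\nabla u_{h,\Delta t}$ and of its shift from the previous step, $\tilde\P_h^n\odot\nabla u_h^n(\nabla u_h^{n+1})^\top\to\P(Q)\odot\nabla u\,\nabla u^\top$ in $L^1([0,T]\times\Omega)$, and the symmetrization and trace-correction operations pass through by linearity and boundedness. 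Collecting these limits shows $(Q,u)$ satisfies \eqref{eq:weak_formulation}; together with the regularity and the symmetry/trace-free/boundary conditions from the first step, $(Q,u)$ is a weak solution.
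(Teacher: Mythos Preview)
Your proposal is correct and follows essentially the same route as the paper: uniform bounds from Theorem~\ref{thm:fully_discrete_energy_stability} and Corollary~\ref{corollary:fully_discrete_DtQ_L2_bound}, compactness of $Q_{h,\Delta t}$ via Lemma~\ref{conj:precompactness}, passing to the limit in the elliptic equation, upgrading $\nabla u_{h,\Delta t}\rightharpoonup\nabla u$ to strong $L^2$ convergence by comparing the discrete and limiting elliptic relations tested against $\tilde u_h$ and $\tilde u$, and then handling the parabolic equation term by term. Your treatment of the coupling term (dominated convergence for $\tilde\P_h^n\to\P(Q)$ combined with strong $L^2$ convergence of $\nabla u_h$ and its shift) is a slightly cleaner variant of the paper's Egoroff-based splitting, and your Abel-summation handling of the discrete time derivative replaces the paper's direct weak convergence of $D_t^+Q_h$ followed by an integration by parts, but these are cosmetic differences rather than a different strategy.
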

\begin{proof}
	We divide the proof into several steps. We start by using the energy estimate to get precompactness of the approximating sequences. For $\{u_h\}$ this only implies weak compactness in $L^\infty([0,T]; H^1(\Omega))$. However, because the gradient of $u_h$ appears nonlinearly in the equations, we need to derive strong compactness in $L^2([0,T]\times\Omega)$. We do this as a next step by using the structure of the elliptic equation. Finally, we pass to the limit in the parabolic equation.

\medskip	
	
\textbf{Step 1: Precompactness of approximating sequences}
	
From Theorem~\ref{thm:fully_discrete_energy_stability}, the Poincar\'e inequality, the fact that $\|\tilde q_h\|_{H^1}, \|\tilde g_h(t)\|_{H^1} \le C$ uniformly in $h$, and Corollary~\ref{corollary:fully_discrete_DtQ_L2_bound}, we have 
\begin{equation}
\label{eq:compactness_bounds}
    \{Q_{h,\Delta t}\} \subset L^\infty(0,T; H^1(\Omega)),\quad
    \{u_{h,\Delta t}\} \subset L^\infty(0,T; H^1(\Omega)),\quad
    \{D_t^+ Q_{h,\Delta t}\} \subset L^2([0,T]\times\Omega)
\end{equation}
uniformly in $\Delta t, h>0$ sufficiently small. Then the Banach-Alaoglu theorem implies weak star convergence of subsequences as $\Delta t, h\to 0$, for simplicity denoted $\{(Q_h, u_h)\}_{h>0}$,
\begin{subequations}
\begin{align}
    Q_h \overset{\ast}{\rightharpoonup} Q \text{ in } L^\infty(0,T; H^1(\Omega)),&\quad u_h \overset{\ast}{\rightharpoonup} u \text{ in } L^\infty(0,T; H^1(\Omega)),\\
    D_t^+ Q_h \rightharpoonup \partial_t Q\text{ in }& L^2([0,T]\times\Omega).
\end{align}
\end{subequations}
The bounds \eqref{eq:compactness_bounds} allow us to apply the result of Lemma~\ref{conj:precompactness},
\begin{equation}
    Q_h \to Q\text{ in } L^2([0,T]\times\Omega)
\end{equation}
up to a subsequence, for simplicity still denoted $Q_h$. Because $Q_h$ converges to $Q$ in $L^2([0,T]\times\Omega)$, there is a subsequence which converges almost everywhere, and so we see that $Q$ must be trace-free and symmetric, as the $Q_h$ are trace-free and symmetric. In an abuse of notation, we will denote by $\Pi_h$ the $L^2(\Omega)$-orthogonal projection from $H_0^1(\Omega;\R^{d\times d})$ to $\mathbb X_h$ and also the $L^2(\Omega)$-projection from $H_0^1(\Omega)$ to $\mathbb Y_h$. Then this projection satisfies
\begin{equation}
\label{eq:projection_convergence}
    \lvert\Pi_h v - v\rvert_{H^1} \overset{h\to0}{\longrightarrow} 0
\end{equation}
for $v\in H^s(\Omega)$ or $v\in H^s(\Omega;\R^{d\times d})$, $s>1$. Furthermore, $\|\Pi_h\|_{\mathcal{L}(H^1(\Omega);H^1(\Omega))}$ is uniformly bounded in $h>0$ (see~\cite{Bramble2002} or~\cite[Lemma 11.18]{Ern2021}). Then for a given $\Phi,\psi\in C^1_c([0,T]\times\Omega)$, taking $\Pi_h\Phi$ and $\Pi_h\psi$ to be test functions in (\ref{eq:fully_discrete_scheme_definition}), we obtain (omitting dependences on $x$)
\begin{subequations}
\begin{equation}
\label{eq:scheme_parabolic_convergence}
\begin{split}
    &- \int_0^T\int_\Omega D_t^+ Q_h : \Pi_h\Phi\, dx\, dt \\
        &= L \int_0^T\int_\Omega \frac{\nabla Q_h(t+\Delta t) + \nabla Q_h(t)}{2}:\nabla\Pi_h\Phi\, dx\, dt\\ 
        &+  \int_0^T\int_\Omega \left(\frac{\partial\mathcal F_1(Q_h(t+\Delta t))}{\partial Q} - \frac{\partial\mathcal F_2(Q_h(t))}{\partial Q}\right):\Pi_h\Phi\, dx\, dt\\ 
        &- \frac{\varepsilon_2}{2} \int_0^T\int_\Omega\left((\tilde\P(Q_h(t+\Delta t), Q_h(t))\odot \nabla u_h(t)(\nabla u_h(t+\Delta t))^\top)_S\right.\\
        &\hspace{20ex}\left.- \frac1d\operatorname{tr}(\tilde\P(Q_h(t+\Delta t), Q_h(t))\odot \nabla u_h(t)(\nabla u_h(t+\Delta t))^\top)I\right):\Pi_h\Phi\, dx\, dt,
\end{split}
\end{equation}
\begin{equation}
\label{eq:scheme_elliptic_convergence}
    \int_0^T\int_\Omega (\varepsilon_1\nabla u_h + \varepsilon_2 \T_R(Q_h)\nabla u_h)\cdot\nabla\Pi_h\psi\, dx\, dt = 0.
\end{equation}
\end{subequations}

\medskip 

\textbf{Step 2: Convergence of Elliptic Equation}

We first consider the elliptic equation.  We have, using triangle inequality,
\begin{align*}
	&\left\lvert \int_0^T \int_\Omega\T_R(Q_h)\nabla u_h\cdot\nabla\Pi_h\psi\, dx\, dt - \int_0^T\int_\Omega \T_R(Q)\nabla u\cdot\nabla\psi\, dx\, dt\right\rvert\\
	&\le \underbrace{\left\lvert \int_0^T\int_\Omega (\T_R(Q_h)-\T_R(Q))\nabla u_h\cdot\nabla\Pi_h\psi\, dx\, dt\right\rvert}_{I} + \underbrace{\left\lvert\int_0^T\int_\Omega \left(\T_R(Q)\nabla u_h\cdot\nabla\Pi_h\psi - \T_R(Q)\nabla u\cdot\nabla\psi\right)\, dx\, dt\right\rvert}_{II}.
\end{align*}
Estimating $I$, we note that $(\T_R(Q_h)-\T_R(Q))\nabla u_h\cdot\nabla\Pi_h\psi = \tr((\nabla\Pi_h\psi)^\top(\T_R(Q_h)-\T_R(Q))\nabla u_h)$ and use the cyclic property of the trace followed by the Cauchy-Schwarz inequality and the fact that $\T_R$ is Lipschitz to obtain
\begin{align*}
	I
	&= \left\lvert \int_0^T\int_\Omega (\T_R(Q_h) - \T_R(Q)):\nabla u_h(\nabla\Pi_h\psi)^\top\, dx\, dt\right\rvert\\
	&\le \|\T_R(Q_h) - \T_R(Q)\|_{L^2([0,T]\times\Omega)}\|\nabla u_h(\nabla\Pi_h\psi)^\top\|_{L^2([0,T]\times\Omega)}\\
	&\le C\|Q_h - Q\|_{L^2([0,T]\times\Omega)}\|\nabla u_h(\nabla\Pi_h\psi)^\top\|_{L^2([0,T]\times\Omega)}\\
	&\le C\|Q_h - Q\|_{L^2([0,T]\times\Omega)} \stackrel{h,\Delta t\to 0}{\longrightarrow} 0,
\end{align*}
where we have used in the last line that $\nabla u_h$ is bounded in $L^\infty(0,T;L^2(\Omega))$ and $\Pi_h\psi \in C_c([0,T]\times\overline{\Omega})$ is uniformly bounded in $h>0$.
Then to estimate $II$, we have
\begin{align*}
	II 
	&\le \left\lvert \int_0^T\int_\Omega\T_R(Q)\nabla u_h\cdot(\nabla\Pi_h\psi - \nabla \psi)\, dx\, dt\right\rvert + \left\lvert \int_0^T\int_\Omega \T_R(Q)(\nabla u_h - \nabla u)\cdot\nabla \psi\, dx\, dt\right\rvert.
\end{align*}
The first term converges to zero since $|\Pi_h \psi - \psi|_{H^1}\to 0$, $\mathcal{T}_R(Q_h)$ is uniformly bounded and $\norm{\Grad u_h}_{L^2}\leq C$ for all $h>0$.
The second term converges to $0$ due to the weak star convergence of $\nabla u_h$ to $\nabla u$. Thus $II \to 0$ and so we obtain
\begin{equation}
	\label{eq:elliptic2_convergence}
	\int_0^T \int_\Omega \T_R(Q_h)\nabla u_h\cdot\nabla\Pi_h\psi\, dx\, dt \to \int_0^T\int_\Omega \T_R(Q)\nabla u\cdot\nabla\psi\, dx\, dt.
\end{equation}
In the same way, one can show for the other term in the elliptic equation,
\begin{equation}
\label{eq:elliptic1_convergence}
    \int_0^T \int_\Omega\nabla u_h\cdot\nabla\Pi_h\psi\, dx\, dt \longrightarrow \int_0^T\int_\Omega \nabla u\cdot \nabla\psi\, dx\, dt
\end{equation}
as $\Delta t,h\to 0$.
And so combining \eqref{eq:scheme_elliptic_convergence}, \eqref{eq:elliptic1_convergence}, and \eqref{eq:elliptic2_convergence}, we have
\begin{equation}
\label{eq:scheme_elliptic_convergence_limit}
    \int_0^T\int_\Omega (\varepsilon_1\nabla u + \varepsilon_2 \T_R(Q)\nabla u)\cdot\nabla\psi\, dx\, dt = 0.
\end{equation}
Since it follows from the energy inequality that $u\in L^2(0,T;H^1(\dom))$, and smooth functions are dense in $H^1(\Omega)$, this identity holds for any $\psi\in L^2(0,T;H^1(\dom))$.

\medskip

\textbf{Step 3: Strong Convergence of $\nabla u_h$}

For what follows, we will need to show that $\nabla u_h \to \nabla u$ in $L^2([0,T]\times\Omega)$. To do this, take $\psi = u_h - \tilde g_h$ in (\ref{eq:scheme_elliptic_convergence}) and $\psi = u - \tilde g$ in (\ref{eq:scheme_elliptic_convergence_limit}) to obtain
\begin{align*}
\int_0^T\int_\Omega (\varepsilon_1\nabla u_h + \varepsilon_2\T_R(Q_h)\nabla u_h)\cdot\nabla (u_h - \tilde g_h)\, dx\, dt &= 0,\\
\int_0^T\int_\Omega (\varepsilon_1\nabla u + \varepsilon_2\T_R(Q)\nabla u)\cdot\nabla (u - \tilde g)\, dx\, dt &= 0.
\end{align*}
Subtracting these two equations gives
\begin{equation}
\label{eq:u_difference_strong_convergence}
\begin{split}
&\int_0^T\int_\Omega \left(\varepsilon_1 (\lvert\nabla u\rvert^2 - \lvert\nabla u_h\rvert^2) + \varepsilon_2(\nabla u^\top\T_R(Q)\nabla u - \nabla u_h^\top\T_R(Q_h)\nabla u_h)\right)\, dx\, dt\\ 
&= \int_0^T\int_\Omega \left(\varepsilon_1(\nabla\tilde g\cdot\nabla u - \nabla\tilde g_h\cdot\nabla u_h) + \varepsilon_2(\nabla\tilde g^\top \T_R(Q)\nabla u - \nabla\tilde g_h^\top\T_R(Q_h)\nabla u_h)\right)\, dx\, dt.
\end{split}
\end{equation}
For the left hand side, we first have
\begin{equation*}
\int_0^T\int_\Omega \varepsilon_1 (\lvert\nabla u\rvert^2 - \lvert \nabla u_h\rvert^2)\, dx\, dt
= \int_0^T\int_\Omega \varepsilon_1(-\lvert \nabla u - \nabla u_h\rvert^2 + 2\nabla u^\top(\nabla u - \nabla u_h))\, dx\, dt.
\end{equation*}
Furthermore, we have
\begin{align*}
&\int_0^T\int_\Omega \varepsilon_2 (\nabla u^\top \T_R(Q)\nabla u - \nabla u_h^\top \T_R(Q_h)\nabla u_h)\, dx\, dt\\
&= \int_0^T\int_\Omega \varepsilon_2\left(\nabla u^\top \T_R(Q)(\nabla u-\nabla u_h) - (\nabla u - \nabla u_h)^\top \T_R(Q_h)(\nabla u-\nabla u_h)\right.\\ 
&\hspace{12ex}\left.+ \nabla u^\top \T_R(Q_h)(\nabla u-\nabla u_h) + \nabla u^\top(\T_R(Q)-\T_R(Q_h))\nabla u_h\right)\, dx\, dt.
\end{align*}
Thus, we can rewrite~\eqref{eq:u_difference_strong_convergence} as
\begin{multline}
\label{eq:notsure}
\int_0^T\int_\Omega \varepsilon_1\lvert \nabla u - \nabla u_h\rvert^2
+\varepsilon_2(\nabla u - \nabla u_h)^\top \T_R(Q_h)(\nabla u-\nabla u_h)dx dt\\
=2\varepsilon_1\underbrace{\int_0^T\int_\Omega  \nabla u^\top(\nabla u - \nabla u_h) dx dt}_{I}+\varepsilon_2\underbrace{\int_0^T\int_\Omega \nabla u^\top \T_R(Q)(\nabla u-\nabla u_h)  dx dt}_{II}\\
+\varepsilon_2\underbrace{\int_0^T\int_\Omega  \nabla u^\top \T_R(Q_h)(\nabla u-\nabla u_h) dx dt}_{III} +\varepsilon_2\underbrace{\int_0^T\int_\Omega \nabla u^\top(\T_R(Q)-\T_R(Q_h))\nabla u_h dxdt}_{IV}
\\ 
- \varepsilon_1\underbrace{\int_0^T\int_\Omega (\nabla\tilde g\cdot\nabla u - \nabla\tilde g_h\cdot\nabla u_h)dx dt}_{V} - \varepsilon_2\underbrace{\int_0^T\int_\Omega(\nabla\tilde g^\top \T_R(Q)\nabla u - \nabla\tilde g_h^\top\T_R(Q_h)\nabla u_h)dx dt}_{VI}.
\end{multline}
We first note that we can lower bound the left hand side by 
\begin{equation}\label{eq:u_strong_convergence_lhs_bound}
\int_0^T\int_\Omega \varepsilon_1\lvert \nabla u - \nabla u_h\rvert^2
+\varepsilon_2(\nabla u - \nabla u_h)^\top \T_R(Q_h)(\nabla u-\nabla u_h)dx dt\geq (\varepsilon_1-R|\varepsilon_2|)\norm{\Grad u - \Grad u_h}_{L^2([0,T]\times\Omega)}^2.
\end{equation}
Then we proceed to estimating the terms on the right hand side. The first two terms, $I$ and $II$, converge to zero by the weak convergence of $\Grad u_h$ to $\Grad u$ and the boundedness of $\T_R$. 

For $III$, we have that $\Grad u_h\rightharpoonup\Grad u$ in $L^2([0,T]\times\Omega)$ so that $(\Grad u_h-\Grad u)\Grad u^\top\rightharpoonup 0$ in $L^1([0,T]\times\Omega)$, and $\T_R(Q_h)\to\T_R(Q)$ a.e. since $Q_h\to Q$ a.e. along the subsequence we have taken. Also, $\T_R(Q_h)$ is bounded in $L^\infty([0,T]\times\Omega)$ due to truncation. Thus, we can apply Lemma A.1 in \cite{weber2021convergent} entrywise to obtain $\T_R(Q_h):(\nabla u-\nabla u_h)\nabla u^\top\rightharpoonup 0$ in $L^1([0,T]\times\Omega)$ which implies
\[
III=\int_0^T\int_\Omega \nabla u^\top \T_R(Q_h)(\nabla u-\nabla u_h)\, dx\, dt  = \int_0^T\int_\Omega \T_R(Q_h):(\nabla u-\nabla u_h)\nabla u^\top\, dx\, dt \to 0.
\]

Term $IV$ is similar. We have that since $Q_h\to Q$ a.e. along the subsequence we have taken, then $\T_R(Q) - \T_R(Q_h) \to 0$ a.e. Furthermore, $\T_R(Q) - \T_R(Q_h)$ is bounded in $L^\infty([0,T]\times\Omega)$ due to the truncation. By weak convergence of $\nabla u_h$ in $L^2([0,T]\times\Omega)$ and the fact that $\nabla u$ is in $L^2([0,T]\times\Omega)$, we have that $\nabla u_h\nabla u^\top$ converges weakly to $\nabla u\nabla u^\top$ in $L^1([0,T]\times\Omega)$. Thus, applying Lemma A.1 in \cite{weber2021convergent}, we obtain $(\T_R(Q)-\T_R(Q_h)):\nabla u_h\nabla u^\top \rightharpoonup 0$ in $L^1([0,T]\times\Omega)$ which implies
\[
IV = \int_0^T\int_\Omega \nabla u^\top(\T_R(Q) - \T_R(Q_h))\nabla u_h\, dx\, dt = \int_0^T\int_\Omega (\T_R(Q) - \T_R(Q_h)):\nabla u_h\nabla u^\top\, dx\, dt\to 0.
\]
Next, consider $V$:
\begin{align*}
|V|&=\left\lvert\int_0^T\int_\Omega (\nabla\tilde g\cdot\nabla u - \nabla \tilde g_h\cdot\nabla u_h)\, dx\, dt\right\rvert = \left\lvert\int_0^T\int_\Omega (\nabla\tilde g\cdot(\nabla u - \nabla u_h) - (\nabla \tilde g_h - \nabla \tilde g)\cdot\nabla u_h)\, dx\, dt\right\rvert\\
&\le \|\nabla\tilde g_h-\nabla\tilde g\|_{L^2([0,T]\times\Omega)}\|\nabla u_h\|_{L^2([0,T]\times\Omega)} + \left\lvert \int_0^T\int_\Omega \nabla\tilde g\cdot(\nabla u-\nabla u_h)\, dx\, dt\right\rvert.
\end{align*}
Then since $\nabla \tilde g_h\to \nabla\tilde g$ in $L^2([0,T]\times\Omega)$, $\nabla u_h$ is bounded in $L^2([0,T]\times\Omega)$, and $\nabla u_h \rightharpoonup \nabla u$ in $L^2([0,T]\times\Omega)$, we have that this bound goes to $0$ as $\Delta t, h\to 0$. For $VI$, we have
\begin{align*}
|VI|&=\left\lvert \int_0^T\int_\Omega (\nabla \tilde g^\top \T_R(Q)\nabla u - \nabla \tilde g_h^\top \T_R(Q_h)\nabla u_h)\, dx\, dt\right\rvert\\
&\le \left\lvert \int_0^T\int_\Omega \nabla \tilde g^\top \T_R(Q)(\nabla u - \nabla u_h)\, dx\, dt\right\rvert + \left\lvert \int_0^T\int_\Omega \nabla \tilde g^\top (\T_R(Q) - \T_R(Q_h))\nabla u_h\, dx\, dt\right\rvert\\ 
&\hspace{10ex}+ \left\lvert \int_0^T\int_\Omega (\nabla\tilde g^\top - \nabla \tilde g_h^\top)\T_R(Q_h)\nabla u_h\, dx\, dt\right\rvert.
\end{align*}
The first of these converges to $0$ by weak convergence of $\nabla u_h$. The second term converges to $0$ by the same argument as the analogous term dealt with before. The third term is bounded by $\|\nabla\tilde g - \nabla \tilde g_h\|_{L^2([0,T]\times\Omega)}\|\T_R(Q_h)\nabla u_h\|_{L^2([0,T]\times\Omega)}$ and hence goes to $0$ as $\Delta t,h\to 0$. 
It follows that all the terms $I$ - $VI$ in~\eqref{eq:notsure} converge to zero as $h,\Delta t\to 0$ up to subsequence.
Combining this with~\eqref{eq:u_strong_convergence_lhs_bound}, we have that
\begin{equation}
\begin{split}
0 &\ge \lim_{\Delta t,h\to 0} \left\{\varepsilon_1\|\nabla u - \nabla u_h\|_{L^2([0,T]\times\Omega)}^2 -\lvert\varepsilon_2\rvert R\|\nabla u - \nabla u_h\|_{L^2([0,T]\times\Omega)}^2\right\},
\end{split}
\end{equation}
which implies that $\|\nabla u-\nabla u_h\|_{L^2([0,T]\times\Omega)}\to 0$ as $\Delta t, h\to 0$ since $\varepsilon_1 > \lvert\varepsilon_2\rvert R$.

\medskip

\textbf{Step 4: Convergence of Parabolic Equation}

We turn now to estimates for \eqref{eq:scheme_parabolic_convergence}. First, we have
\begin{align*}
    &\left\lvert \int_0^T \int_\Omega D_t^+ Q_h:\Pi_h\Phi\, dx\, dt - \int_0^T\int_\Omega \partial_t Q:\Phi\, dx\, dt\right\rvert\\
        &\le \left\lvert \int_0^T\int_\Omega D_t^+ Q_h: (\Pi_h\Phi - \Phi)\, dx\, dt\right\rvert + \left\lvert\int_0^T\int_\Omega (D_t^+ Q_h - \partial_t Q):\Phi\, dx\, dt\right\rvert.
\end{align*}
The second of these terms converges to $0$ by weak star convergence of $D_t^+ Q_h$ to $\partial_t Q$. For the first term, we have
\begin{align*}
    \left\lvert \int_0^T\int_\Omega D_t^+ Q_h: (\Pi_h\Phi - \Phi)\, dx\, dt\right\rvert
        &\le \|D_t^+ Q_h\|_{L^2([0,T]\times\Omega)}\|\Pi_h\Phi - \Phi\|_{L^2([0,T]\times\Omega)}.
\end{align*}
Then by the Poincar\'e inequality and since $\lvert\Pi_h\Phi(t) - \Phi(t) \rvert_{H^1(\Omega)} \to 0$ for each $t$ as $\Delta t, h\to 0$ and is uniformly bounded by a constant, we have by Lebesgue's dominated convergence theorem and the fact that $D_t^+ Q_h$ is uniformly bounded in $L^2([0,T]\times\Omega)$ that that $\left\lvert \int_0^T\int_\Omega D_t^+ Q_h: (\Pi_h\Phi - \Phi)\, dx\, dt\right\rvert \to 0$ as $\Delta t, h\to 0$. Thus,
\begin{equation}
    \int_0^T \int_\Omega D_t^+ Q_h:\Pi_h\Phi\, dx\, dt \to \int_0^T\int_\Omega \partial_t Q:\Phi\, dx\, dt
\end{equation}
as $\Delta t, h\to 0$. By a similar argument, we obtain
\begin{equation}
    \int_0^T\int_\Omega \frac{\nabla Q_h(t)}{2}:\nabla\Pi_h\Phi\, dx\, dt \to \frac{1}{2} \int_0^T\int_\Omega \nabla Q:\nabla\Phi\, dx\, dt
\end{equation}
as $\Delta t, h\to 0$. Then for $\Delta t$ sufficiently small (since $\Phi$ has compact support), we have
\begin{align*}
    &\left\lvert \int_0^T\int_\Omega \frac{\nabla Q_h(t+\Delta t)}{2}:\nabla\Pi_h\Phi(t)\, dx\, dt - \int_0^T \int_\Omega \frac{\nabla Q}{2}: \nabla\Phi(t)\, dx\, dt\right\rvert\\ 
    &\le \left\lvert \int_0^T\int_\Omega \frac{\nabla Q_h(t)}{2}:(\nabla\Pi_h\Phi(t-\Delta t) - \nabla\Phi(t))\, dx\, dt\right\rvert + \left\lvert \int_0^T \int_\Omega \frac{\nabla Q_h - \nabla Q}{2}: \nabla\Phi(t)\, dx\, dt\right\rvert.
\end{align*}
The second term above converges to $0$ by weak star convergence of $\nabla Q_h$. The first term we bound by
\begin{align*}
    &\left\lvert \int_0^T\int_\Omega \frac{\nabla Q_h(t)}{2}:(\nabla\Pi_h\Phi(t-\Delta t) - \nabla\Phi(t))\, dx\, dt\right\rvert \le \frac{1}{2}\|\nabla Q_h\|_{L^2([0,T]\times\Omega)}\|\nabla\Pi_h\Phi(\cdot-\Delta t)-\nabla\Phi\|_{L^2([0,T]\times\Omega)}.
\end{align*}
Now we have
\[
    \|\nabla\Pi_h\Phi(\cdot-\Delta t)-\nabla\Phi\|_{L^2([0,T]\times\Omega)} \le \|\nabla\Pi_h\Phi(\cdot-\Delta t)-\nabla\Pi_h\Phi\|_{L^2([0,T]\times\Omega)} + \|\nabla\Pi_h\Phi-\nabla\Phi\|_{L^2([0,T]\times\Omega)},
\]
and the second term on the right hand side converges to $0$, as before. For the first term, we have
\begin{align*}
    \|\nabla\Pi_h\Phi(\cdot-\Delta t)-\nabla\Pi_h\Phi\|_{L^2([0,T]\times\Omega)}
        &\le CT^{1/2} \esssup_{t\in[0,T]} \|\Pi_h\Phi(t-\Delta t) - \Pi_h\Phi(t)\|_{H^1(\Omega)}\\
        &\le CT^{1/2}\|\Pi_h\|_{\mathcal{L}(H^1(\Omega);H^1(\Omega))}o_{\Delta t\to 0}(1)
\end{align*}
where we have used that $\Phi$ is continuously differentiable.
Thus we see that this bound converges to $0$ as $\Delta t\to 0$, as $\|\Pi_h\|_{\mathcal{L}(H^1(\Omega);H^1(\Omega))}$ is uniformly bounded in $h>0$. Combining this with the previous bounds and the fact that $\|\nabla Q_h\|_{L^2([0,T]\times\Omega)}$ is uniformly bounded in $h>0$, we obtain
\begin{equation}
    \int_0^T\int_\Omega \frac{\nabla Q_h(t+\Delta t)}{2}:\nabla\Pi_h\Phi(t)\, dx\, dt \to \frac12\int_0^T \int_\Omega \nabla Q: \nabla\Phi(t)\, dx\, dt
\end{equation}
and conclude
\begin{equation}
   L \int_0^T\int_\Omega \frac{\nabla Q_h(t+\Delta t) + \nabla Q_h(t)}{2}:\nabla\Pi_h\Phi\, dx\, dt \to L \int_0^T\int_\Omega \nabla Q:\nabla\Phi\, dx\, dt
\end{equation}
as $\Delta t,h\to 0$. Now we have
\begin{align*}
    &\left\lvert\int_0^T\int_\Omega \frac{\partial\mathcal F_2(Q_h)}{\partial Q}:\Pi_h\Phi\, dx\, dt - \int_0^T\int_\Omega \frac{\partial\mathcal F_2(Q)}{\partial Q}:\Phi\, dx\, dt \right\rvert\\ 
    &= \left\lvert \int_0^T\int_\Omega \left[(\beta_1 - a)Q_h:\Pi_h\Phi + (\beta_2 - c)\operatorname{tr}(Q_h^2)Q_h:\Pi_h\Phi - (\beta_1 - a)Q:\Phi - (\beta_2-c)\operatorname{tr}(Q^2)Q:\Phi\right]\, dx\, dt \right\rvert\\
    &\le (\beta_1 - a)\underbrace{\left\lvert \int_0^T\int_\Omega (Q_h:\Pi_h\Phi - Q:\Phi)\, dx\, dt\right\rvert}_{I} + (\beta_2 - c)\underbrace{\left\lvert\int_0^T\int_\Omega (\operatorname{tr}(Q_h^2)Q_h:\Pi_h\Phi - \operatorname{tr}(Q^2)Q:\Phi)\, dx\, dt\right\rvert}_{II}.
\end{align*}
Similar to arguments above, we have $I\to 0$ as $\Delta t,h\to 0$. For $II$, we have
\begin{align*}
    II
        &\le \underbrace{\left\lvert \int_0^T\int_\Omega \operatorname{tr}(Q_h^2)Q_h:(\Pi_h\Phi - \Phi) \, dx\, dt\right\rvert}_{I'} + \underbrace{\left\lvert \int_0^T\int_\Omega (\operatorname{tr}(Q_h^2)Q_h:\Phi - \operatorname{tr}(Q^2)Q:\Phi)\, dx\, dt\right\rvert}_{II'}.
\end{align*}
Because $\nabla Q_h \in L^\infty(0,T;L^2(\Omega))$ uniformly in $\Delta t, h>0$, we have that $Q_h \in L^6([0,T]\times\Omega)$ by the Sobolev embedding theorem, so $\operatorname{tr}(Q_h^2)Q_h$ is bounded in $L^2([0,T]\times\Omega)$. It follows as in our previous arguments that $I' \to 0$ as $\Delta t,h\to 0$. Then for $II'$, we have
\begin{align*}
    II'
        &= \left\lvert \int_0^T\int_\Omega (\operatorname{tr}(Q_h^2)Q_h - \operatorname{tr}(Q_hQ)Q_h + \operatorname{tr}(Q_hQ)Q_h - \operatorname{tr}(Q^2)Q_h + \operatorname{tr}(Q^2)Q_h - \operatorname{tr}(Q^2)Q):\Phi\, dx\, dt\right\rvert\\
        &\leq C\int_0^T\int_{\Omega} |Q_h-Q|\left(|Q|^2 + |Q_h|^2\right) |\Phi| dx dt\\
        & \leq C \norm{Q_h-Q}_{L^2([0,T]\times \Omega)}\left(\norm{Q}_{L^4([0,T]\times\Omega)}^2 +\norm{Q_h}_{L^4([0,T]\times\Omega)}^2 \right)\norm{\Phi}_{L^\infty([0,T]\times\Omega)}.
\end{align*}
Since $\Phi$ is bounded, $Q$ and $Q_h$ are bounded in $L^4([0,T]\times\Omega)$, and $Q_h \to Q$ in $L^2([0,T]\times\Omega)$, we have that $II' \to 0$ and we conclude that $II\to 0$ and thus 
\begin{equation}
    \int_0^T\int_\Omega \frac{\partial\mathcal F_2(Q_h)}{\partial Q}:\Pi_h\Phi\, dx\, dt \to \int_0^T\int_\Omega \frac{\partial\mathcal F_2(Q)}{\partial Q}:\Phi\, dx\, dt
\end{equation}
as $\Delta t, h\to 0$.

Now since $\mathcal{F}_1$ is of the same order as $\mathcal{F}_2$, for $\Delta t$ sufficiently small, we can similarly show as above, additionally using the continuity of $L^2([0,T]\times\Omega)$ shifts, that 
\begin{align*}
    \int_0^T\int_\Omega \frac{\partial\mathcal F_1(Q_h(t+\Delta t))}{\partial Q}:\Pi_h\Phi\, dx\, dt &= \int_0^T\int_\Omega \frac{\partial\mathcal F_1(Q_h)}{\partial Q}:\Pi_h\Phi(t-\Delta t)\, dx\, dt \to \int_0^T \int_\Omega \frac{\partial\mathcal{F}_1(Q)}{\partial Q}:\Phi\, dx\, dt.
\end{align*}
as $\Delta t,h\to 0$.
Denoting $\Phi_h = \Pi_h\Phi$, we now consider
\begin{align*}
    &\left\lvert\int_0^T\int_\Omega\left((\tilde\P(Q_h(t+\Delta t), Q_h(t))\odot \nabla u_h(t)(\nabla u_h(t+\Delta t))^\top)_S\right):\Phi_h\, dx\, dt - \int_0^T\int_\Omega (\P(Q)\odot\nabla u(\nabla u)^\top)_S:\Phi\, dx\, dt\right\rvert.
\end{align*}
We will show that this quantity converges to zero as $\Delta t, h\to 0$, from which it will follow that the trace term associated with it in~\eqref{eq:scheme_parabolic_convergence} also converges to the desired quantity by a very similar argument. We can also ignore the symmetric part in the above expression, for if we can show convergence without the symmetric part, the corresponding transpose will also converge as we can just move the transpose onto $\Phi_h$. We have
\begin{align*}
    &\left\lvert \int_0^T \int_\Omega (\tilde\P(Q_h(t+\Delta t),Q_h(t))\odot\nabla u_h(t)\nabla u_h(t+\Delta t)^\top - \P(Q)\odot\nabla u_h(t)\nabla u_h(t+\Delta t)^\top):\Pi_h\Phi\, dx\, dt\right\rvert\\
    &\le C(\Phi)\sum_{i,j=1}^d \left\|(\tilde \P(Q_h(t+\Delta t), Q_h(t))_{ij} - \P(Q)_{ij})\frac{\partial u_h(t)}{\partial x_i}\right\|_{L^2([0,T]\times\Omega)}\left\|\frac{\partial u_h(t+\Delta t)}{\partial x_j}\right\|_{L^2([0,T]\times\Omega)}.
\end{align*}
By strong convergence of $Q_h$, we have that $\tilde \P(Q_h(\cdot+\Delta t), Q_h)_{ij} \to \P(Q)_{ij}$ pointwise a.e. in $[0,T]\times\Omega$. By Egoroff's theorem, we have that $\tilde \P(Q_h(\cdot+\Delta t), Q_h)_{ij} \to \P(Q)_{ij}$ almost uniformly. So for any $\delta>0$, there is $\omega_{\delta} \subseteq [0,T]\times\Omega$ such that $[0,T]\times\Omega\setminus\omega_{\delta}$ has measure at most $\delta$ and $\tilde \P(Q_h(\cdot+\Delta t), Q_h)_{ij} \to \P(Q)_{ij}$ uniformly on $\omega_\delta$. Then 
\begin{align*}
    &\left\|(\tilde \P(Q_h(t+\Delta t), Q_h(t))_{ij} - \P(Q)_{ij})\frac{\partial u_h(t)}{\partial x_i}\right\|_{L^2([0,T]\times\Omega)}\\
    &\le \left\|(\tilde \P(Q_h(t+\Delta t), Q_h(t))_{ij} - \P(Q)_{ij})\frac{\partial u_h(t)}{\partial x_i}\right\|_{L^2(\omega_\delta)} +  2\left\|(1-\chi_{\omega_\delta})\frac{\partial u_h(t)}{\partial x_i}\right\|_{L^2([0,T]\times\Omega)}.
\end{align*}
Then for each fixed $\delta$, we see by the uniform convergence on $\omega_\delta$ and the fact that $\partial u_h/\partial x_i$ is bounded in $L^2([0,T]\times\Omega)$ uniformly in $\Delta t,h$ that the first term above goes to 0 as $\Delta t,h\to0$. By strong convergence of $\nabla u_h$ to $\nabla u$, we have the second term goes to $2\|(1-\chi_{\omega_\delta})\partial u/\partial x_i\|_{L^2([0,T]\times\Omega)}$. As $\delta\to 0$, we see that this goes to 0 by the dominated convergence theorem.

Thus we see that
 \begin{align*}
    &\left\lvert \int_0^T \int_\Omega (\tilde\P(Q_h(t+\Delta t),Q_h(t))\odot\nabla u_h(t)\nabla u_h(t+\Delta t)^\top - \P(Q)\odot\nabla u_h(t)\nabla u_h(t+\Delta t)^\top):\Pi_h\Phi\, dx\, dt\right\rvert \to 0
\end{align*}
as $\Delta t,h\to 0$. As in previous arguments, we also have
\begin{align*}
    &\left\lvert\int_0^T\int_\Omega (\P(Q)\odot\nabla u_h(t)\nabla u_h(t+\Delta t)^\top):(\Pi_h\Phi - \Phi)\, dx\, dt \right\rvert \to 0.
\end{align*}
Then we have
\begin{align*}
    &\left\lvert \int_0^T\int_\Omega (\P(Q)\odot \nabla u_h(t)\nabla u_h(t+\Delta t)^\top):\Phi\, dx\, dt-\int_0^T\int_\Omega (\P(Q)\odot \nabla u(t)\nabla u_h(t+\Delta t)^\top):\Phi\, dx\, dt\right\rvert\\
    &\le C\sum_{i,j=1}^d \left\|\frac{\partial}{\partial x_i}(u_h-u)\right\|_{L^2([0,T]\times\Omega)}\left\|\frac{\partial}{\partial x_j} u_h(\cdot+\Delta t)\Phi_{ij}\right\|_{L^2([0,T]\times\Omega)},
\end{align*}
which converges to $0$ by the $L^2([0,T]\times\Omega)$-convergence of $\nabla u_h$ to $\nabla u$ and because $\Phi_{ij}$ has compact support so that the norm of $\partial_{x_j}u_h(\cdot+\Delta t)\Phi_{ij}$ is well defined. Now we have
\begin{align*}
    &\left\lvert \int_0^T\int_\Omega (\P(Q)\odot \nabla u(t)\nabla u_h(t+\Delta t)^\top):\Phi\, dx\, dt - \int_0^T\int_\Omega (\P(Q)\odot \nabla u\nabla u^\top):\Phi\, dx\, dt\right\rvert\\
    &\le C\sum_{i,j=1}^d\left\|\frac{\partial u}{\partial x_i}\right\|_{L^2([0,T]\times\Omega)}\left(\left\|\Phi_{ij}\left(\frac{\partial u_h(\cdot+\Delta t)}{\partial x_j} - \frac{\partial u(\cdot+\Delta t)}{\partial x_j}\right)\right\|_{L^2([0,T]\times\Omega)}\right.\\ 
    &\hspace{30ex}\left.+ \left\|\Phi_{ij}\left(\frac{\partial u(\cdot+\Delta t)}{\partial x_j} - \frac{\partial u}{\partial x_j}\right)\right\|_{L^2([0,T]\times\Omega)}\right),
\end{align*}
which converges to $0$ by the $L^2([0,T]\times\Omega)$-convergence of $\nabla u_h$ to $\nabla u$ and by continuity of $L^2([0,T]\times\Omega)$ shifts. Thus, we have
\begin{equation}
    \int_0^T\int_\Omega\left((\tilde\P(Q_h(t+\Delta t), Q_h(t))\odot \nabla u_h(t)(\nabla u_h(t+\Delta t))^\top)_S\right):\Phi_h\, dx\, dt \to \int_0^T\int_\Omega (\P(Q)\odot\nabla u(\nabla u)^\top)_S:\Phi\, dx\, dt
\end{equation}
as $\Delta t,h\to 0$, as desired.

Combining all the estimates above, we see that $(Q, u)$ is a weak solution of~\eqref{eq:weak_formulation}, as desired.
\end{proof}

\section{Numerical Results}\label{sec:num}
We now present numerical results computed with our scheme~\eqref{eq:fully_discrete_scheme_definition}. All code to reproduce our results can be found at \url{https://github.com/maxhirsch/Maxwell-Liquid-Crystals}. We compute the truncation $\T_R(Q)$ as
\begin{equation}\label{eq:TRnumerics}
    \T_R(Q)_{ij} = Q_{ij} \widehat{H}\left(\frac{R}{2}-Q_{ij}\right) \widehat{H}\left(\frac{R}{2}+Q_{ij}\right) + \frac{R}{2}\widehat{H}\left(Q_{ij} - \frac{R}{2}\right) - \frac{R}{2}\widehat{H}\left(-Q_{ij}-\frac{R}{2}\right),
\end{equation}
where $\widehat{H}$ is a smooth approximation of the Heaviside function and is given by
\begin{equation}\label{eq:TRnumerics2}
 \widehat{H}(x) = \frac{1}{\pi}\arctan(5x) + \frac12.
\end{equation}
This choice of $\T_R(Q)$ is made to truncate entries of $Q$ to values close to $R/2$ while being very nearly the identity when entries are in $[-R/2, R/2]$. For our experiments, the largest entries of $\T_R(Q)$ are about $R/2+0.05$. Furthermore, we compute $\P(Q)$ using the exact derivative of $\T_R$ instead of a discrete approximation so that the implementation could utilize automatically computed Newton iterations. Our choice of $\T_R$ is also such that its derivatives are bounded, as required by \eqref{eq:conditionsonTr}. Thus $\T_R$ satisfies \eqref{eq:conditionsonTr} with a slightly larger truncation parameter.
For our numerical experiments, we take
\[
    a = -0.3,\quad b = -4,\quad c = 4,\quad \beta_1 = 8,\quad \beta_2 = 8.
\]
Furthermore, we take $\Omega = [-0.5,0.5]^2$ with a $30\times 30$ mesh. Other parameters and the boundary and initial conditions differ for the various experiments. We first consider an example with constant initial director angle. In the second experiment, we show the effect of increasing the electric field magnitude. After that, the third experiment demonstrates the need for the truncation operator. Next, we show that our model captures a phenomenon in liquid crystals known as the Fr\'{e}edericksz transition. We end with an experiment to numerically determine the convergence rate.

\subsection{Constant Initial Director Angle}\label{sec:constant-initial-director}
For this numerical experiment, we take
\[
    L = 1,\quad \varepsilon_1 = 2.5,\quad \varepsilon_2 = 0.5,\quad \varepsilon_3 = 0.01,\quad R=2.
\]
For boundary and initial conditions, we take
\[
    \tilde g(t, x, y) = 10\sin(2\pi t + 0.2)(x+0.5)\sin(\pi y)
\]
and
\[
    Q_0(x, y) = \boldsymbol{\textrm{d}}\boldsymbol{\textrm{d}}^\top - \frac12\operatorname{tr}(\boldsymbol{\textrm{d}}\boldsymbol{\textrm{d}}^\top)I,\quad\text{where}\quad \boldsymbol{\textrm{d}} = \begin{bmatrix}
        (x+0.5)(x-0.5)(y+0.5)(y-0.5)\\
        (x+0.5)(x-0.5)(y+0.5)(y-0.5)
    \end{bmatrix}
\]
is the director for the liquid crystal molecule orientation. The boundary data $\tilde q$ is taken according to $Q_0$ on the boundary. In words, the initial directors have the same angle at each point in space. We see that the initial directors are continuous in $\overline{\Omega}$. We also take $T = 2$ and $\Delta t = 0.01$. The resulting solution $(Q_h, u_h)$ is shown in Figure~\ref{fig:experiment_1_solution}. The maximum magnitude of any entry of a Q-tensor at time $t$ is shown in Figure~\ref{fig:experiment_1_qtensor_entries}, along with the maximum eigenvalue of any Q-tensor and the time-dependent coefficient in $\tilde g$.
\begin{figure}[h]
    \centering
    \subfloat[$t=0$]{\includegraphics[width=0.33\textwidth]{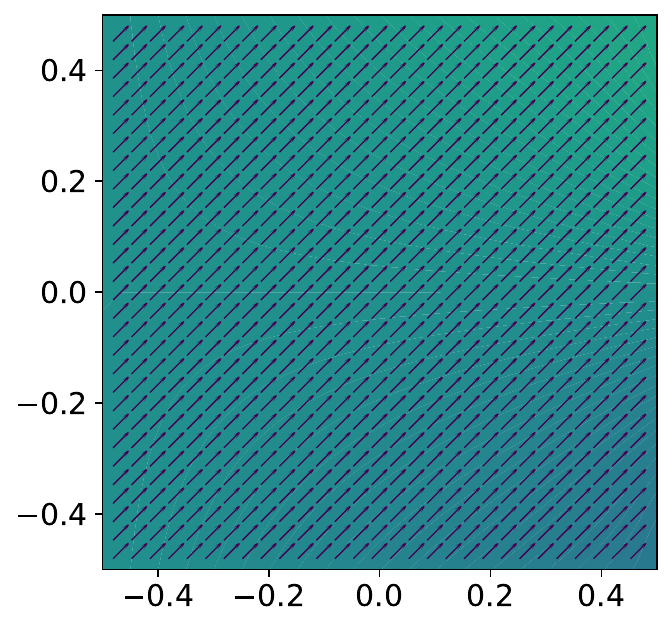}}
    \subfloat[$t=0.25$]{\includegraphics[width=0.33\textwidth]{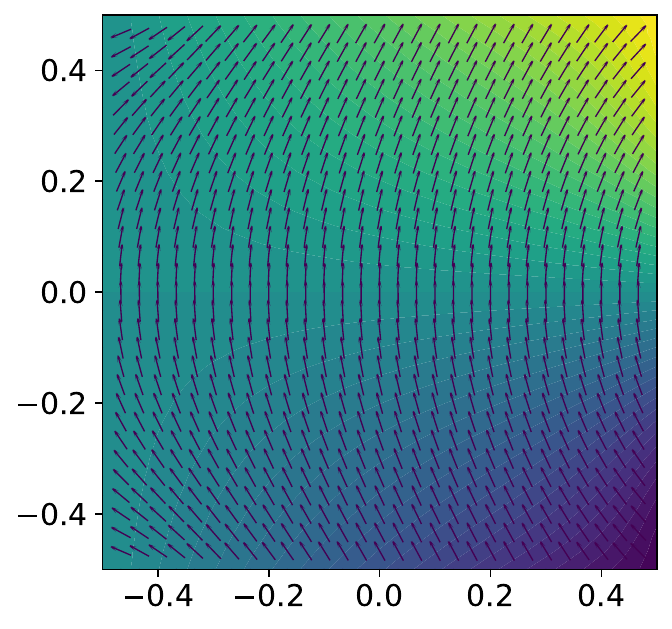}}
    \subfloat[$t=0.5$]{\includegraphics[width=0.33\textwidth]{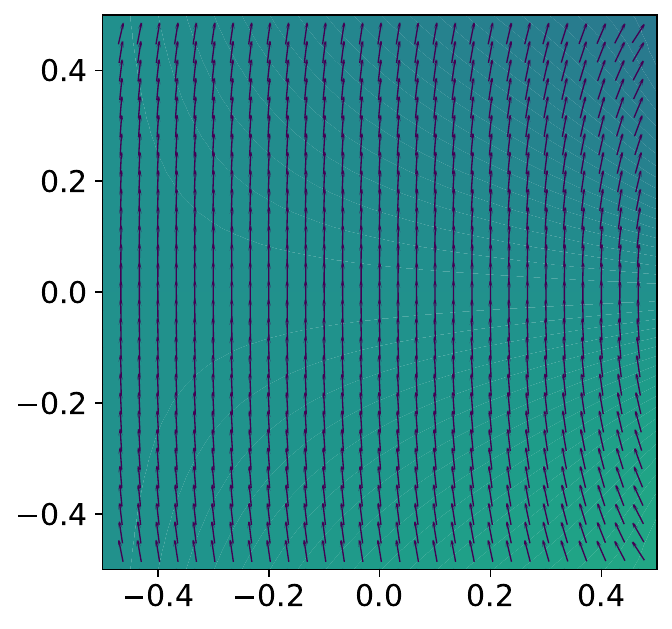}}
    \qquad
    \subfloat[$t=0.75$]{\includegraphics[width=0.33\textwidth]{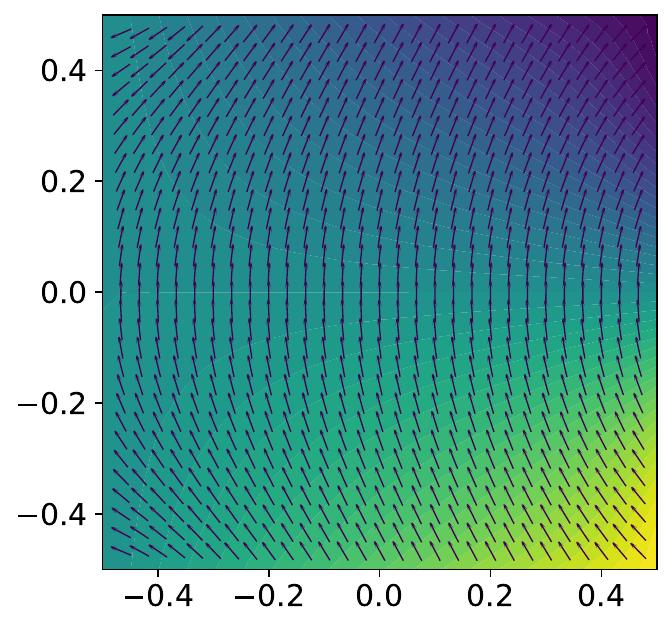}}
    \qquad 
    \subfloat[$t=2$]{\includegraphics[width=0.405\textwidth]{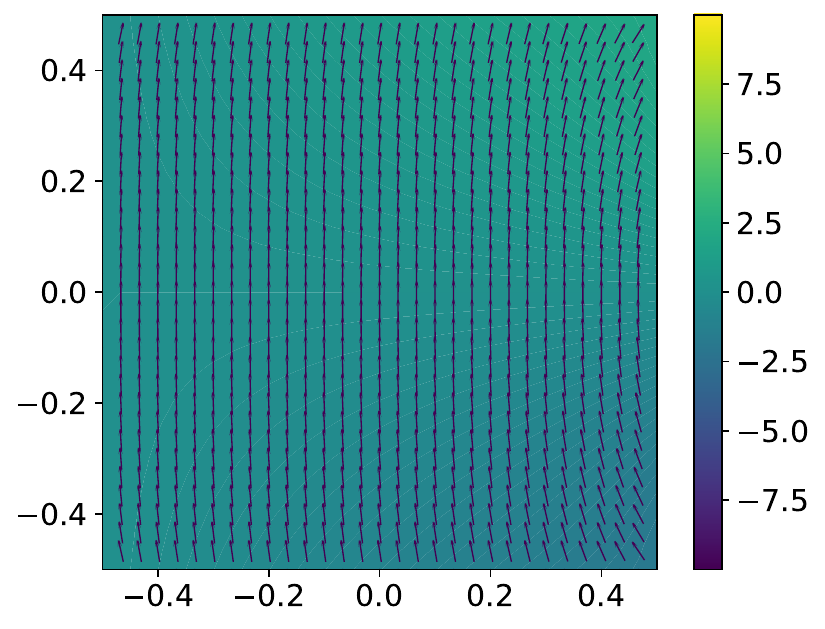}}
    \caption{Director field imposed on colored contour plot of electric potential $u$ for the constant initial director experiment in Section~\ref{sec:constant-initial-director}.}
    \label{fig:experiment_1_solution}
\end{figure}
\begin{figure}[h]
    \centering
    \includegraphics[scale=0.6]{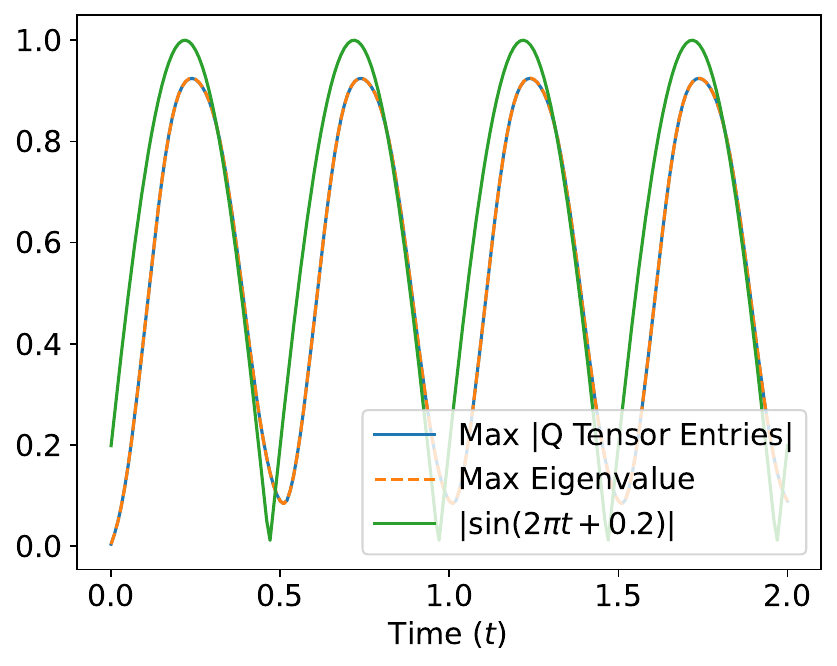}
    \caption{Maximum magnitude Q-tensor entry and maximum eigenvalue over time for the constant initial director experiment in Section~\ref{sec:constant-initial-director}.}
    \label{fig:experiment_1_qtensor_entries}
\end{figure}

The plot of solutions shows the director fields imposed on a color gradient of the electric potential. We see that the directors of the liquid crystals tend to align parallel to the gradient of the electric potential. When the electric potential changes from a large value in the top right of the domain to the bottom right and has value roughly $0$ throughout $\Omega$ (as in the transition from $t=0.25$ to $t=0.5$ to $t=0.75$), we see that the Q-tensor directors align in the vertical direction. They repeat this behavior of aligning with the gradient of the electric potential for all time, and we again see the directors in the vertical direction at time $t=2$. 

In Figure~\ref{fig:experiment_1_qtensor_entries}, we see the maximum magnitude Q-tensor entry and the maximum eigenvalue changing periodically according to the boundary data for the electric potential. The maximum eigenvalue is almost exactly the maximum magnitude Q-tensor entry. Of particular note is that the change in the magnitude of the electric potential is followed by a similar change in the Q-tensor entries. This suggests that the electric potential magnitude directly affects the magnitude of the Q-tensor entries. Because of this behavior, it seems that an electric potential with large enough magnitude could exhibit the need for truncation of the Q-tensors. To explore this idea further, we consider a modification of this experiment.

\subsection{Electric Field Increasing Magnitude}\label{sec:increasing-magnitude}
We now repeat the previous experiment except with 
\[
    \tilde g(t, x, y) = 10t\cdot\sin(2\pi t + 0.2)(x+0.5)\sin(\pi y)
\]
so that the relative maxima (minima) increase (decrease) as time increases. The results are shown in Figures~\ref{fig:experiment_2_solution} and~\ref{fig:experiment_2_qtensor_entries}. In particular, the behavior is similar to the previous experiment, except the local maxima in time of the Q-tensor maximum magnitude entries are increasing. We also see the lag between the magnitude of the electric potential changing and the Q-tensor entry magnitudes changing as before. So it seems that the magnitudes of the entries of the Q-tensors are related to the electric potential magnitude. We also see the maximum Q-tensor entry surpass $R/d = R/2 = 1.0$, the value at which each entry is truncated. This suggests that truncation is actually needed for the scheme unless we can assume the electric potential magnitude is small. We explore this further in the next experiment.

\begin{figure}
    \centering
    \subfloat[$t=0$]{\includegraphics[width=0.33\textwidth]{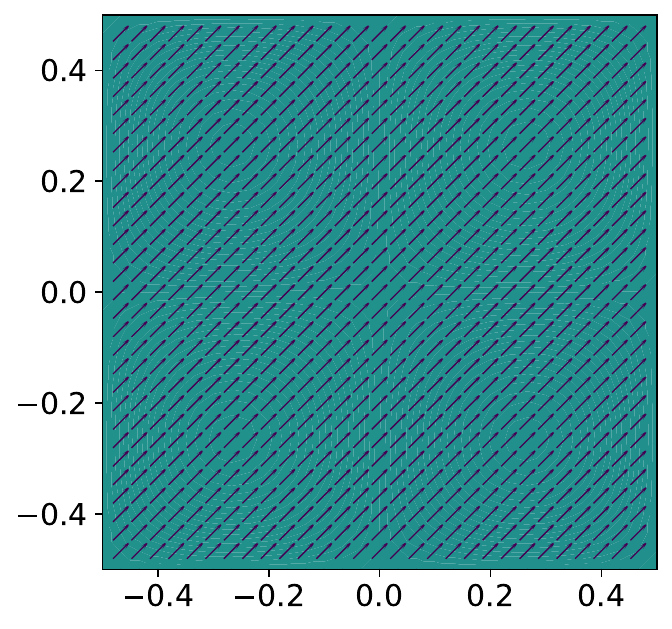}}
    \subfloat[$t=0.25$]{\includegraphics[width=0.33\textwidth]{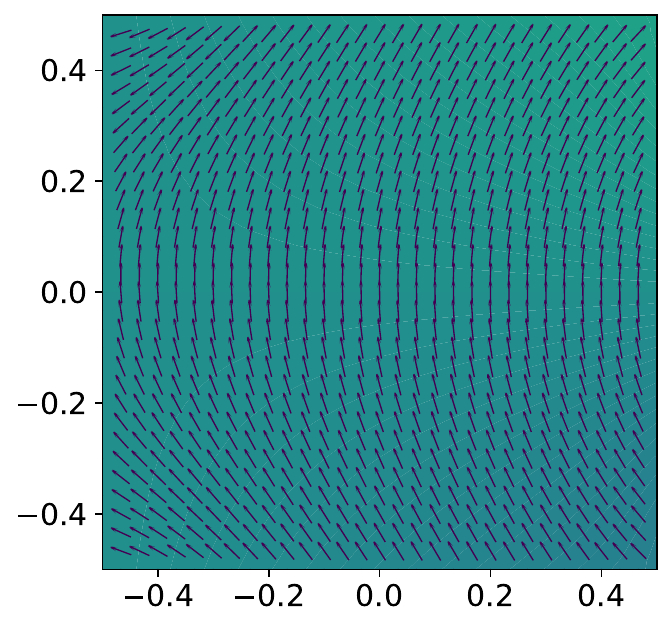}}
    \subfloat[$t=0.75$]{\includegraphics[width=0.33\textwidth]{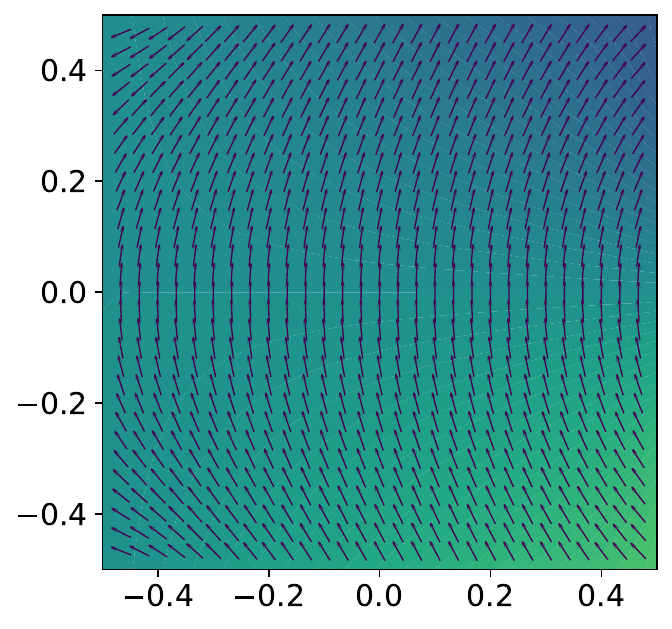}}
    \qquad
    \subfloat[$t=1.25$]{\includegraphics[width=0.33\textwidth]{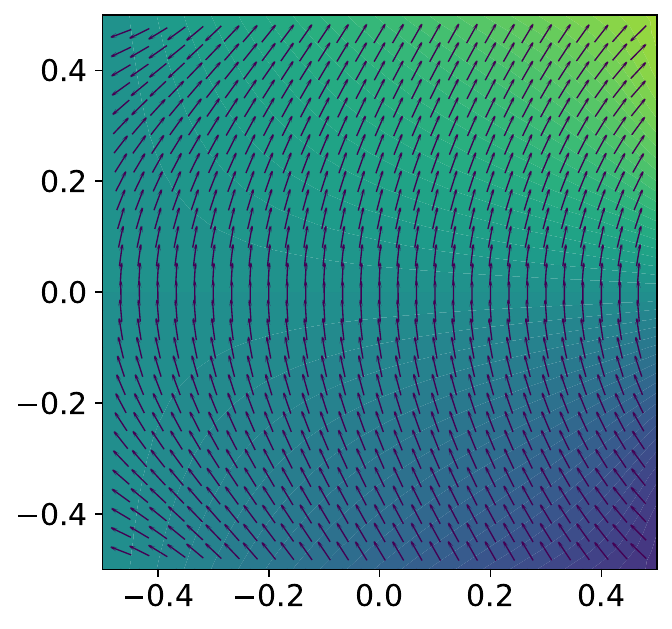}}
    \qquad 
    \subfloat[$t=1.75$]{\includegraphics[width=0.405\textwidth]{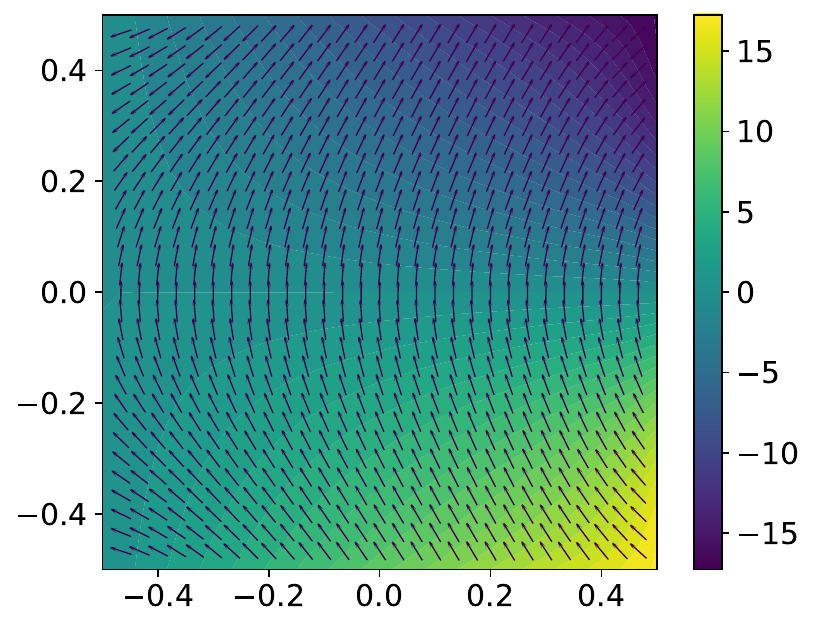}}
    \caption{Director field imposed on colored contour plot of electric potential $u$ for the increasing magnitude electric field experiment in Section~\ref{sec:increasing-magnitude}.}
    \label{fig:experiment_2_solution}
\end{figure}

\begin{figure}
    \centering
    \includegraphics[scale=0.6]{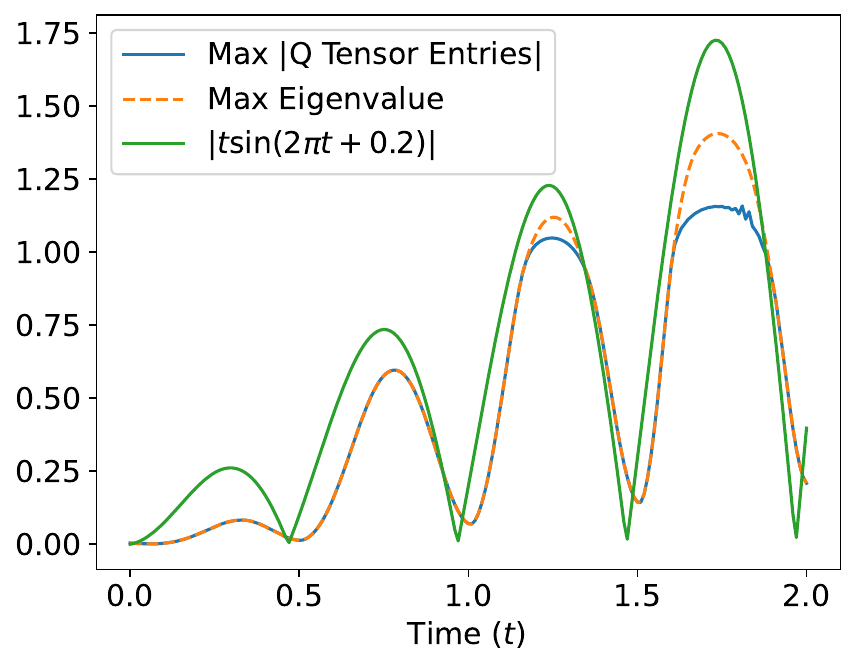}
    \caption{Maximum magnitude Q-tensor entry over time for the increasing magnitude electric field experiment in Section~\ref{sec:increasing-magnitude}.}
    \label{fig:experiment_2_qtensor_entries}
\end{figure}

\subsection{Truncation}\label{sec:truncation-experiment}
In this section, we demonstrate the need for truncation by showing that the Q-tensors can increase over time and obtain values larger than the given truncation. In particular, this will show that truncation is indeed necessary for the elliptic equation \eqref{seq:elliptic} to be solvable for all time steps. Indeed, due to our choice of truncation, when the entries of the Q-tensor are below $R/2$, we have $\T_R(Q)\approx Q$, and the scheme is like solving a scheme with no truncation. Then if the Q-tensor entries surpass $R/2$, we see that a scheme without truncation allows the Q-tensor entries to increase past the point at which the scheme is solvable. First we set 
\[
    L = 0.001,\quad \varepsilon_1 = 2.5,\quad \varepsilon_2 = 0.5,\quad \varepsilon_3 = 5\times10^{-5},\quad R = 2,
\]
and we set the electric field boundary condition
\[
    \tilde g(t, x, y) = 0.7 t \cdot (x+0.5)\sin(\pi y).
\]
We also take $T=10$ and $\Delta t = 0.01$. We repeat this with $T=100$. We chose the parameter $\varepsilon_3$ to be $\varepsilon_3 = L/20$ so that the condition of Theorem~\ref{thm:existence_of_solutions} is satisfied. In a second experiment, we take
\[
    L = 0.001,\quad \varepsilon_1 = 1.0,\quad \varepsilon_2 = 0.25,\quad \varepsilon_3 = 5\times10^{-5},\quad R = 3,
\]
and we keep the electric field boundary condition the same. We change the final time to $T=20$ and keep $\Delta t = 0.01$. The results for these two experiments are shown in Figure~\ref{fig:truncation-experiment}. The plots in this figure show the time evolution of the maximum magnitude eigenvalue in space, the maximum magnitude Q-tensor entry, and the value $R/2$ at which the Q-tensor entries are truncated. In each case, we see that the maximum eigenvalue and the maximum Q-tensor entry surpass the truncation level and continue to increase. We note that $R=2$ corresponds roughly to the physically allowable Q-tensors, so anything larger is non-physical. Thus, we see that as time increases, we obtain less and less physical Q-tensors. Furthermore, in Figure~\ref{fig:truncation-experiment}(B), the maximum Q-tensor entry and maximum eigenvalue each become larger than $\varepsilon_1 / (d\varepsilon_2) = 2.5$, which is the value below which the entries of $Q$ must remain for the existence of solutions. Thus, the Q-tensor entries become so large that the truncation is actually important for solvability of the scheme.

\begin{figure}
    \centering
    \subfloat[$R=2$, $\varepsilon_1 = 2.5$, $\varepsilon_2 = 0.5$]{\includegraphics[scale=0.5]{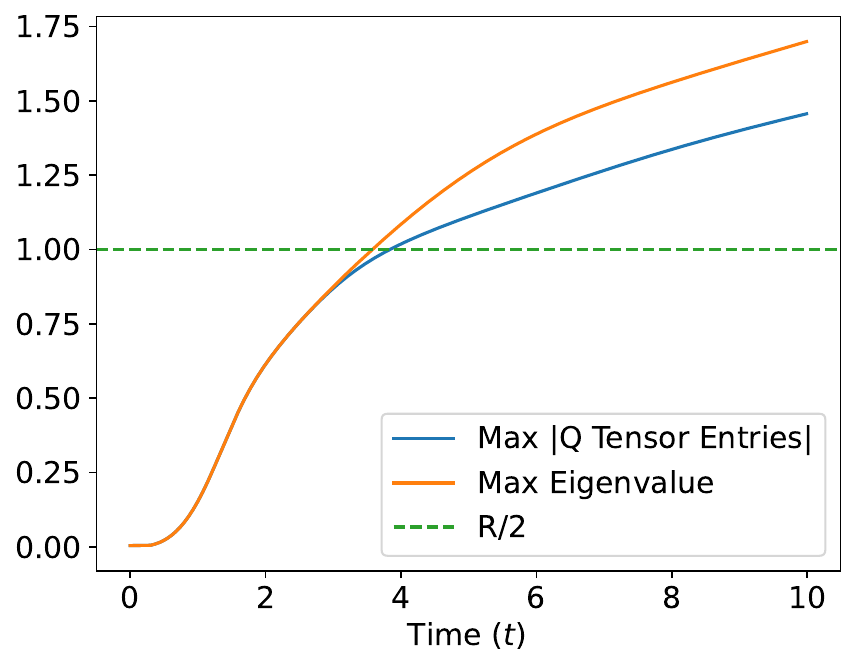}}
    \subfloat[$R=2$, $\varepsilon_1 = 2.5$, $\varepsilon_2 = 0.5$]{\includegraphics[scale=0.5]{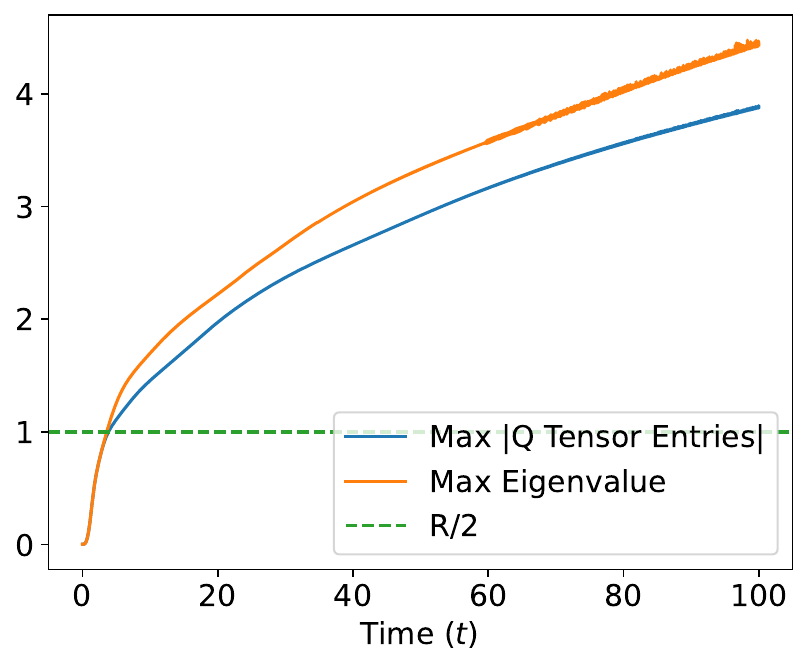}}\\
    \subfloat[$R=3$, $\varepsilon_1 = 1.0$, $\varepsilon_2 = 0.25$]{\includegraphics[scale=0.5]{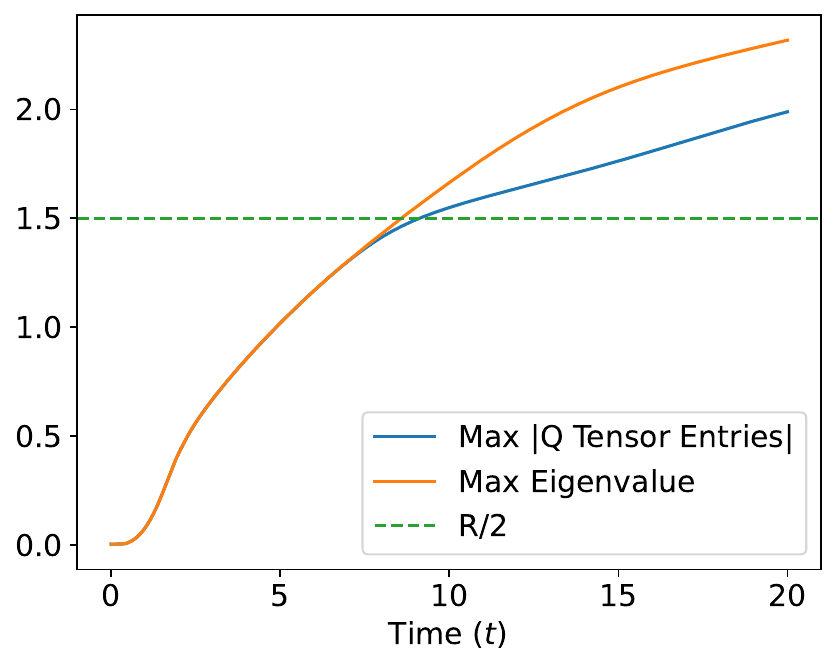}}
    \caption{Maximum absolute value of Q-tensor entries (in blue) and maximum eigenvalue of solutions (in orange) over time for the truncation experiment in Section~\ref{sec:truncation-experiment}. The dashed green line gives the value $R/d$ at which Q-tensor entries are truncated.}
    \label{fig:truncation-experiment}
\end{figure}

\subsection{Fr\'{e}edericksz transition}\label{sec:freedericksz-transition}
We now demonstrate a phenomenon in liquid crystals known as the Fr\'{e}edericksz transition. This transition is a change in behavior of the liquid crystal directors in which the directors align according to the anchoring of the boundary conditions for sufficiently small electric fields, and the directors align according to the electric field for sufficiently large electric fields~\cite{Freedericksz1927}. We can think of this transition as a sort of competition between the boundary conditions and the electric field. 

Here we will use the same parameters as in the previous experiments, and take
\[
    \tilde g(t, x, y) = \begin{cases}
        0 &\text{if } t < 0.5\\
        10x/3 &\text{if } 0.5 \le t < 1\\
        20x/3 &\text{if } 1\le t< 1.5\\
        10x &\text{if } t \ge 1.5,
    \end{cases}
\]
and
\[
    Q_0(x, y) = \boldsymbol{\textrm{d}}\boldsymbol{\textrm{d}}^\top - \frac12\operatorname{tr}(\boldsymbol{\textrm{d}}\boldsymbol{\textrm{d}}^\top)I,\quad\text{where}\quad \boldsymbol{\textrm{d}} = (0, 1).
\]
We take $Q$ to be $\begin{bmatrix} -0.5 & 0\\0 & 0.5\end{bmatrix}$ on the boundary so that the initial condition is continuous at $\partial\Omega$.

We see with the definition of $\tilde g$ that the electric field becomes stronger at times $t=0.5$, $t=1$, and $t=1.5$. The results of our simulation are shown in Figure~\ref{fig:experiment_3_solution}. We see that for $t\le 1.5$, the directors are all vertical, which is the orientation associated with the boundary condition. Once the electric potential is larger at time $t=1.5$, we see that the directors change so that a circular region in the middle of the domain has directors oriented horizontally, which is parallel to the gradient of the electric potential. Thus we observe that for large electric potentials, the directors align with the electric field, but for small electric potentials, the liquid crystals orient according to the boundary condition. We investigate this further in Figure~\ref{fig:experiment_3_transition}.

To produce this figure, we choose the extension of the electric potential boundary to be $s\tilde g(t,x,y)/10$, where $s$ is the parameter called ``Electric Field Strength'' on the axes. The plot gives the average absolute value of the director angles at time $t=2$ using this boundary extension. We see that there is a value of the strength $s$ below which the directors align vertically, that is, the same direction as the boundary condition for $Q$. For larger values of the strength $s$, we see that the directors begin to align in directions other than the vertical direction. Thus, our model reproduces the Fr\'{e}edericksz transition.

\begin{figure}
    \centering
    \subfloat[$t=0$]{\includegraphics[width=0.33\textwidth]{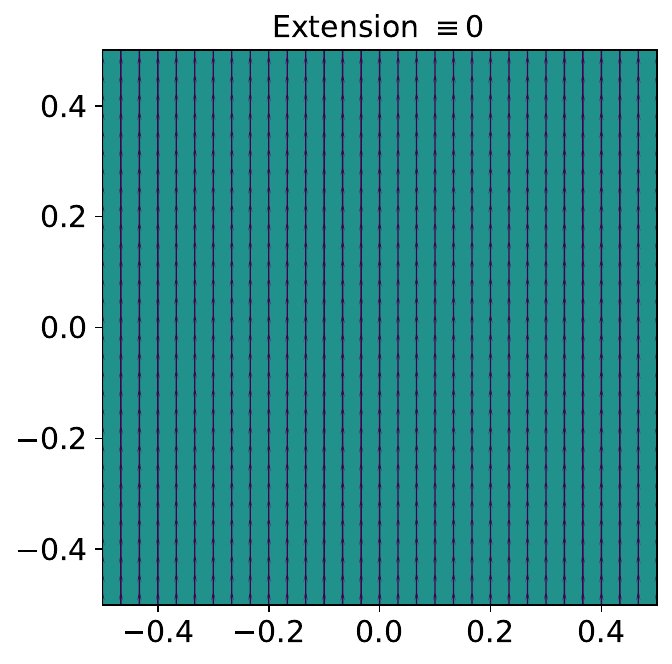}}
    \subfloat[$t=0.5$]{\includegraphics[width=0.33\textwidth]{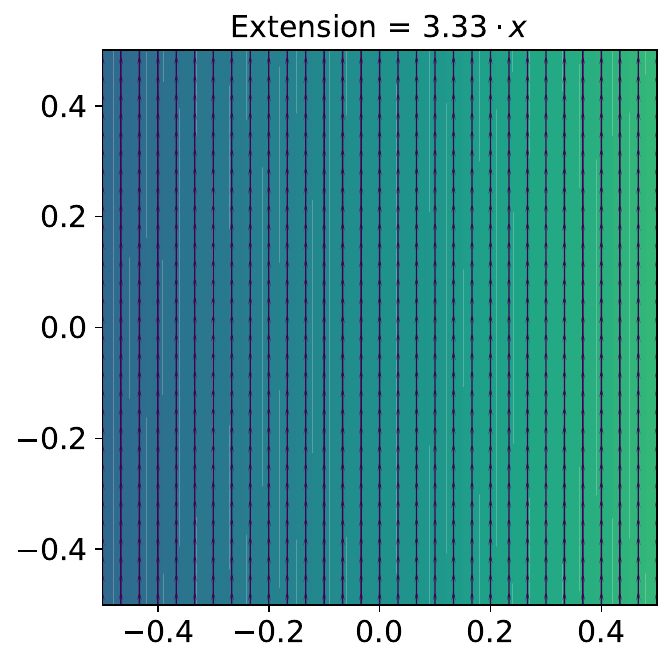}}
    \subfloat[$t=1$]{\includegraphics[width=0.33\textwidth]{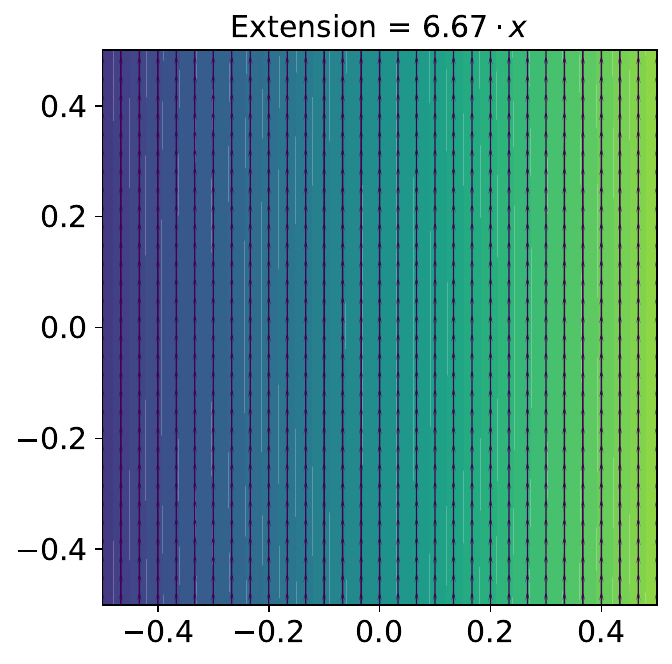}}
    \qquad 
    \subfloat[$t=1.5$]{\includegraphics[width=0.33\textwidth]{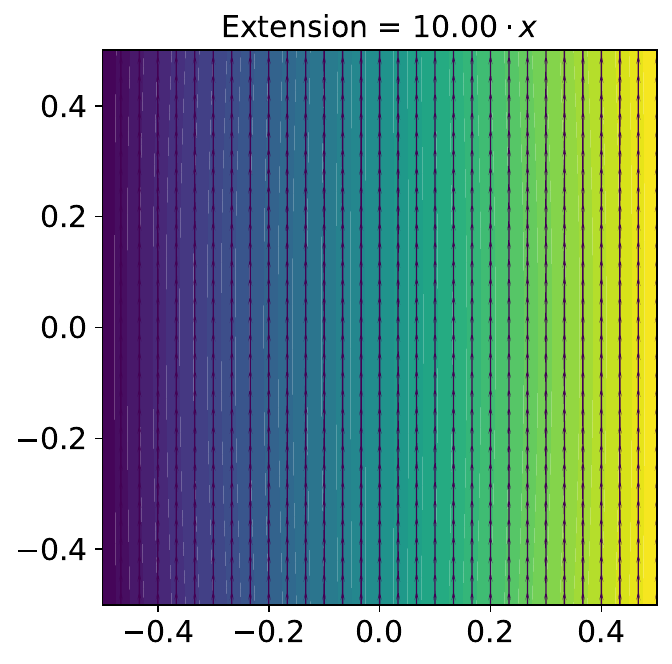}}
    \subfloat[$t=2$]{\includegraphics[width=0.3925\textwidth]{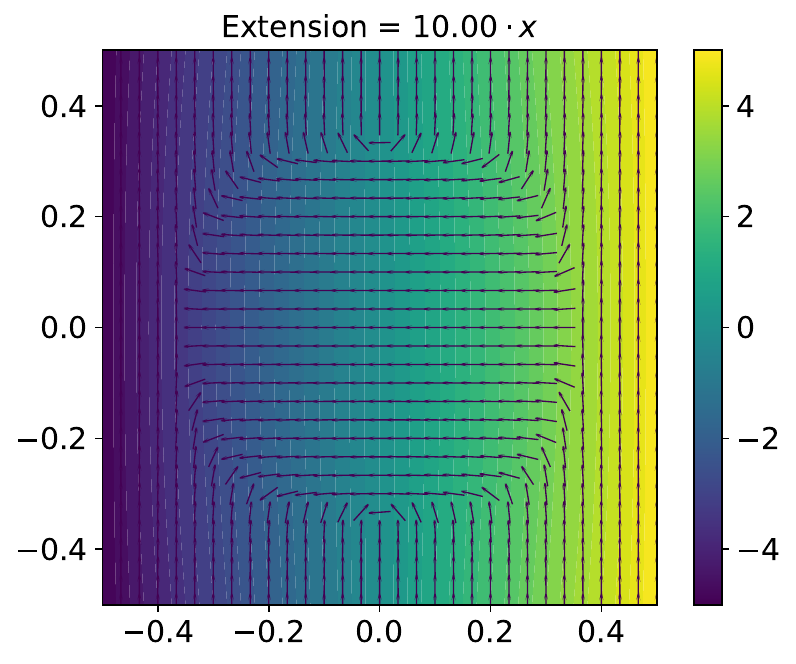}}
    \caption{Director field imposed on colored contour plot of electric potential $u$ for the Fr\'{e}edericksz transition experiment in Section~\ref{sec:freedericksz-transition}.}
    \label{fig:experiment_3_solution}
\end{figure}

\begin{figure}
    \centering
    \includegraphics[scale=0.6]{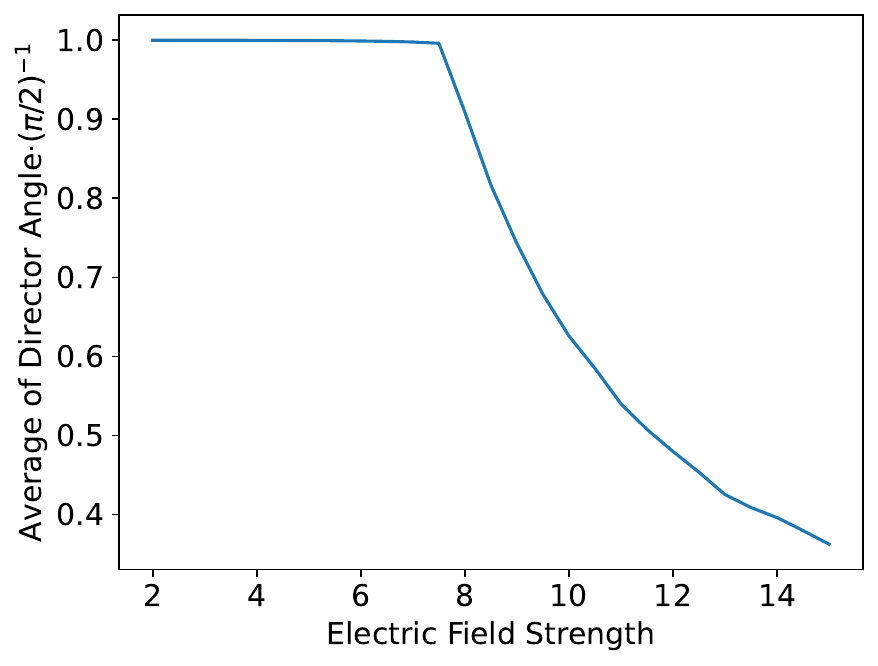}
    \caption{Scaled average director angles at final time for varying electric field strength for Fr\'{e}edericksz transition in Section~\ref{sec:freedericksz-transition}.}
    \label{fig:experiment_3_transition}
\end{figure}

\subsection{Convergence}
We now show the numerical convergence rates for our scheme. We take the same parameters, initial conditions, and boundary conditions as in Section \ref{sec:constant-initial-director}. The errors between the approximations $Q_h$ and $u_h$ compared to the reference solutions $Q^\ast$ and $u^\ast$ are measured in the $H^1$ norms defined by
\begin{align*}
    \mathcal{E}_Q(Q_h, Q^\ast)^2 &= \int_\Omega \left(|(Q_h)_{11} - Q^\ast_{11}|^2 + |(Q_h)_{12} - Q^\ast_{12}|^2 + |\nabla (Q_h)_{11} - \nabla Q^\ast_{11}|^2 + |\nabla (Q_h)_{12} - \nabla Q^\ast_{12}|^2\right)\, dx,\\ 
    \mathcal{E}_u(u_h, u^\ast)^2 &= \int_\Omega \left(|u_h - u^\ast|^2 + |\nabla u_h - \nabla u^\ast|^2\right)\, dx.
\end{align*}
For spatial convergence, we set $T = 0.4$ and compute a reference solution with $N = 1000$ time steps and $h = \frac{1}{300}$. We compare solutions using $N = 100$ time steps and $h \in \{\frac{1}{10}, \frac{1}{20}, \frac{1}{35}, \frac{1}{50}\}$. The convergence rates are given in Figure \ref{fig:spatial-convergence}. As expected, the convergence rate is around 1.

\begin{figure}[H]
\centering
\begin{tabular}{|c|c|c|c|c|}
    \hline
    $h$ & $\mathcal{E}_Q(Q_h, Q^\ast)$ & $Q_h$ rate & $\mathcal{E}_u(u_h, u^\ast)$ & $u_h$ rate \\
    \hline
    0.1 & $2.594\times 10^{-1}$ & NaN & $7.166\times 10^{-1}$ & NaN\\
    \hline
    0.05 & $1.326\times 10^{-1}$ & 0.968 & $3.627\times 10^{-1}$ & 0.982\\
    \hline
    0.0286 & $6.987 \times 10^{-2}$ & 1.145 & $1.851\times 10^{-1}$ & 1.202\\
    \hline
    0.02 & $5.602\times 10^{-2}$ & 0.619 & $1.441\times 10^{-1}$ & 0.703\\
    \hline
\end{tabular}
\caption{Errors and convergence rates for spatial discretization.}
\label{fig:spatial-convergence}
\end{figure}

For convergence in time, we set $T = 0.2$ and compute a reference solution with $N = 8000$ time steps and $h = 0.01$. We compare solutions using $h = 0.01$ and $N \in \{40, 80, 160, 320, 640, 1280\}$ time steps. The convergence rates are given in Figure \ref{fig:time-convergence}. Again, we observe a rate of around 1.

\begin{figure}[H]
\centering
\begin{tabular}{|c|c|c|c|c|}
    \hline
    $\Delta t$ & $\mathcal{E}_Q(Q_h, Q^\ast)$ & $Q_h$ rate & $\mathcal{E}_u(u_h, u^\ast)$ & $u_h$ rate \\
    \hline
    $5\times 10^{-3}$ & $5.015\times 10^{-4}$ & NaN & $3.331\times 10^{-5}$ & NaN\\
    \hline
    $2.5\times 10^{-3}$ & $1.703\times 10^{-4}$ & $1.558$ & $1.646\times 10^{-5}$ & $1.017$\\
    \hline
    $1.25\times 10^{-3}$ & $7.869\times 10^{-5}$ & $1.113$ & $8.162\times 10^{-6}$ & $1.012$\\
    \hline
    $6.25\times 10^{-4}$ & $1.461\times 10^{-5}$ & $2.430$ & $1.516\times 10^{-6}$ & $2.428$\\
    \hline
    $3.125\times 10^{-4}$ & $7.026\times 10^{-6}$ & $1.056$ & $7.293\times 10^{-7}$ & $1.056$\\
    \hline
    $1.5625\times 10^{-4}$ & $3.213\times 10^{-6}$ & $1.129$ & $3.336\times 10^{-7}$ & $1.128$\\
    \hline
\end{tabular}
\caption{Errors and convergence rates for time discretization.}
\label{fig:time-convergence}
\end{figure}

\section{Conclusions}
In this article, we presented a Q-tensor model for a liquid crystal director field under the influence of an electric field and adapted it, by using a truncation operator, to make it well-posed. We then designed an energy-stable fully-discrete finite element discretization for this model using convex splitting, and proved that the method is uniformly stable with respect to the discretization parameters within a reasonable parameter range. We showed that the implicit scheme is well-posed without any restrictions on the size of the time step by leveraging the Leray-Schauder fixed point theorem. Using the a priori estimates coming from the discrete energy balance and a special identity for the elliptic equation, we prove convergence of the approximations computed by the scheme to a weak solution of the system of PDEs~\eqref{eq:system_strong_formulation} for vanishing polarization. Finally, we present numerical experiments demonstrating the scheme's capabilities to capture realistic liquid crystal dynamics such as the Fr\'{e}edericksz transition. We also show numerically that the truncation operator is necessary as the $Q$-tensor entries grow unboundedly for certain choices of applied electric potential, which would make the elliptic equation ill-posed. To the best of our knowledge, this is the first numerical scheme for this model that has been shown to be stable and convergent.

\appendix
\section{Uniform energy stability of the scheme}\label{app:energystability}
Here we prove the uniform discrete energy stability of the scheme, Theorem~\ref{thm:fully_discrete_energy_stability}.

\begin{proof}
	Averaging~\eqref{eq:ufullydiscrete} at times $n$ and $n+1$ gives
	\begin{equation}
		\begin{split}
			\int_\Omega (\varepsilon_1\nabla u_h^{n+1/2} + \frac{\varepsilon_2}{2} (\T_R(Q^{n+1}_h)\nabla u_h^{n+1} + \T_R(Q_h^n)\nabla u_h^{n}) + \varepsilon_3\operatorname{div}(Q^{n+1/2}_h))\cdot\nabla\psi_h\, dx = 0.
		\end{split}
	\end{equation}
	Recall that $\tilde u_h^n = u_h^n - \tilde g_h^n$. Then $\tilde u^n_h = 0$ on $\partial \Omega\times[0,T]$. Take $\psi_h = D_t^+ \tilde u^n_h$ to get
	\begin{align*}
		0 &= \int_\Omega \left(\varepsilon_1 \nabla u^{n+1/2}_h\cdot D_t^+ \nabla u_h^n - \varepsilon_1\nabla u_h^{n+1/2}\cdot D_t^+\nabla\tilde g^n_h + \frac{\varepsilon_2}{2} (\T_R(Q_h^{n+1})\nabla u_h^{n+1} + \T_R(Q_h^n)\nabla u^{n}_h)\cdot D_t^+\nabla u_h^n\right.\\ 
		&\hspace{10ex}- \frac{\varepsilon_2}{2} D_t^+(\nabla \tilde g^n_h)\cdot (\T_R(Q^{n+1}_h)\nabla u_h^{n+1} + \T_R(Q_h^n)\nabla u_h^{n})\\ 
		&\hspace{15ex}\Big.+ \varepsilon_3\operatorname{div}(Q_h^{n+1/2}) \cdot D_t^+ \nabla u^n_h - \varepsilon_3\operatorname{div}(Q_h^{n+1/2})\cdot D_t^+\nabla\tilde g^n_h\Big)\, dx
	\end{align*}
	so that
	\begin{multline}
		\label{eq:discrete_div_integrated}
		\int_\Omega \left(\frac{\varepsilon_1}{2} D_t^+|\nabla u^n_h|^2 + \frac{\varepsilon_2}{2} (\T_R(Q_h^{n+1})\nabla u_h^{n+1} + \T_R(Q_h^n)\nabla u^{n}_h)\cdot D_t^+\nabla u_h^n + \varepsilon_3\operatorname{div}(Q_h^{n+1/2})\cdot D_t^+\nabla u_h^n\right)\, dx\\
		= \int_\Omega \left(\varepsilon_1\nabla u_h^{n+1/2}\cdot D_t^+\nabla \tilde g^n_h + \frac{\varepsilon_2}{2} (\T_R(Q_h^{n+1})\nabla u_h^{n+1} + \T_R(Q^n_h)\nabla u_h^{n})\cdot D_t^+\nabla\tilde g_h^n + \varepsilon_3\operatorname{div}(Q_h^{n+1/2})\cdot D_t^+\nabla \tilde g^n_h\right)\, dx.
	\end{multline}
	Now take $\Phi_h = D_t^+ Q^n_h$, which is zero on the boundary, as a test function in~\eqref{eq:Qfullydiscrete} and note that $(D_t^+ Q^n_h)_S = D_t^+ Q^n_h$ since $Q^n_h$ and $Q_h^{n+1}$ are symmetric. Symmetry also implies that $(\tilde\P^n_h\odot \nabla u_h^n(\nabla u_h^{n+1})^\top)_S : D_t^+ Q_h^n = \tilde\P^n_h\odot \nabla u_h^n(\nabla u_h^{n+1})^\top : D_t^+ Q^n_h$. With convexity of $\mathcal F_1, \mathcal F_2$ and using that $D_t^+ Q^n_h$ is trace-free, we obtain
	\begin{equation}
		\begin{split}
			&\int_\Omega |D_t^+ Q^n_h|^2\, dx + D_t^+ \int_\Omega \left(\frac{L}{2}|\nabla Q^n_h|^2 + \mathcal F_B(Q_h^{n})\right)\, dx\\
			&\le \frac{\varepsilon_2}{2} \int_\Omega \tilde\P_h^n\odot \nabla u_h^n(\nabla u_h^{n+1})^\top: D_t^+ Q_h^n\, dx + \varepsilon_3\int_\Omega \nabla u_h^{n+1/2} : \operatorname{div} D_t^+ Q_h^n\, dx.
		\end{split}
	\end{equation}
	Combining this with equation (\ref{eq:discrete_div_integrated}) gives
	\begin{align*}
		&\int_\Omega |D_t^+ Q^n_h|^2\, dx + D_t^+ \int_\Omega \left(\frac{L}{2}|\nabla Q^n_h|^2 + \mathcal F_B(Q_h^{n})\right)\, dx\\
		&\le \frac{\varepsilon_2}{2} \int_\Omega \tilde\P^n_h\odot \nabla u_h^n(\nabla u_h^{n+1})^\top: D_t^+ Q_h^n\, dx + \varepsilon_3\int_\Omega \nabla u_h^{n+1/2} : \operatorname{div} D_t^+ Q_h^n\, dx\\
		&+ \int_\Omega \left(\frac{\varepsilon_1}{2} D_t^+|\nabla u_h^n|^2 + \frac{\varepsilon_2}{2} (\T_R(Q_h^{n+1})\nabla u_h^{n+1} + \T_R(Q_h^n)\nabla u_h^{n})\cdot D_t^+\nabla u_h^n + \varepsilon_3\operatorname{div}(Q_h^{n+1/2})\cdot D_t^+\nabla u_h^n\right)\, dx\\
		&- \int_\Omega \left(\varepsilon_1\nabla u_h^{n+1/2}\cdot D_t^+\nabla \tilde g^n_h + \frac{\varepsilon_2}{2} (\T_R(Q_h^{n+1})\nabla u_h^{n+1} + \T_R(Q_h^n)\nabla u^{n}_h)\cdot D_t^+\nabla\tilde g^n_h + \varepsilon_3\operatorname{div}(Q_h^{n+1/2})\cdot D_t^+\nabla \tilde g_h^n\right)\, dx.
	\end{align*}
	And we see, using the facts that $\tilde\P_h^n$, $\T_R(Q_h^n)$, and $\T_R(Q_h^{n+1})$ are symmetric and that $A:bc^\top = A:cb^\top$ when $A$ is symmetric, that
	\begin{align*}
		&\tilde\P^n_h\odot \nabla u_h^n(\nabla u_h^{n+1})^\top : D_t^+ Q^n_h + (\T_R(Q^{n+1}_h)\nabla u_h^{n+1} + \T_R(Q_h^n)\nabla u_h^{n})\cdot D_t^+\nabla u_h^n\\
		&\hspace{10ex}= D_t^+ \T_R(Q_h^n): \nabla u_h^{n}(\nabla u_h^{n+1})^\top + (\T_R(Q_h^{n+1})\nabla u^{n+1}_h + \T_R(Q_h^n)\nabla u_h^{n})\cdot D_t^+\nabla u^n_h\\
		&\hspace{10ex}= D_t^+ (\T_R(Q_h^n):\nabla u_h^n(\nabla u_h^n)^\top).
	\end{align*}
	We also have
	\begin{equation*}
		\nabla u^{n+1/2}_h\cdot \operatorname{div} D_t^+ Q^n_h + \operatorname{div}(Q^{n+1/2}_h)\cdot D_t^+\nabla u^n_h
		= D_t^+ \left(\operatorname{div}(Q^n_h)\cdot\nabla u^n_h\right).
	\end{equation*}
	Thus, we have
	\begin{equation}
		\label{eq:pre_energy_balance_semidiscrete}
		\begin{split}
			&\int_\Omega |D_t^+ Q^n_h|^2\, dx + D_t^+ \int_\Omega \left(\frac{L}{2}|\nabla Q^n_h|^2 + \mathcal F_B(Q^{n}_h)\right)\, dx\\
			&\le \int_\Omega \frac{\varepsilon_2}{2} D_t^+(\T_R(Q_h^n):\nabla u_h^n(\nabla u_h^n)^\top)\, dx + \int_\Omega \varepsilon_3 D_t^+(\operatorname{div}(Q^n_h)\cdot\nabla u^n_h)\, dx + \int_\Omega \frac{\varepsilon_1}{2} D_t^+|\nabla u^n_h|^2\, dx\\
			&- \int_\Omega \left(\varepsilon_1\nabla u_h^{n+1/2}\cdot D_t^+\nabla \tilde g^n_h + \frac{\varepsilon_2}{2} (\T_R(Q_h^{n+1})\nabla u_h^{n+1} + \T_R(Q_h^n)\nabla u^{n}_h)\cdot D_t^+\nabla\tilde g^n_h + \varepsilon_3\operatorname{div}(Q_h^{n+1/2})\cdot D_t^+\nabla \tilde g^n_h\right)\, dx.
		\end{split}
	\end{equation}
	Now note that we have
	\[
	D_t^+ \int_\Omega (\varepsilon_1\nabla u^{n}_h + \varepsilon_2 \T_R(Q_h^n)\nabla u^{n}_h + \varepsilon_3\operatorname{div}(Q_h^{n}))\cdot\nabla\psi_h\, dx = 0,
	\]
	so because $u^{n}_h - \tilde g^{n}_h = 0$ on $\partial \Omega$, we take $\psi_h = u^{n}_h-\tilde g^{n}_h$. This yields
	\begin{equation}
		\begin{split}
			&D_t^+ \int_\Omega (\varepsilon_1 |\nabla u^{n}_h|^2 + \varepsilon_2(\nabla u^{n}_h)^\top \T_R(Q_h^n)\nabla u_h^{n} + \varepsilon_3\operatorname{div}(Q_h^{n})\cdot\nabla u_h^{n})\, dx\\ 
			&= D_t^+ \int_\Omega (\varepsilon_1 \nabla u_h^{n}\cdot\nabla\tilde g^{n}_h + \varepsilon_2 (\nabla \tilde g^{n}_h)^\top \T_R(Q_h^n)\nabla u_h^{n} + \varepsilon_3\operatorname{div}(Q_h^{n})\cdot\nabla\tilde g_h^{n})\, dx.
		\end{split}
	\end{equation}
	Multiplying this by $\Delta t$, adding to $\Delta t$ times (\ref{eq:pre_energy_balance_semidiscrete}), then summing over $n=0,1,\dots,N-1$ yields
	\begin{equation}
		\begin{split}
			&\Delta t\sum_{n=0}^{N-1} \int_\Omega |D_t^+ Q^n_h|^2\, dx + \int_\Omega \left(\frac{L}{2}|\nabla Q_h^N|^2 + \mathcal F_B(Q_h^{N}) + \frac{\varepsilon_1}{2}|\nabla u_h^N|^2 + \frac{\varepsilon_2}{2} (\nabla u^N_h)^\top \T_R(Q_h^N)\nabla u_h^N\right)\, dx\\
			&\le \int_\Omega \left(\frac{L}{2}|\nabla Q_h^0|^2 + \mathcal F_B(Q_h^{0}) + \frac{\varepsilon_1}{2}|\nabla u_h^0|^2 + \frac{\varepsilon_2}{2} (\nabla u^0_h)^\top \T_R(Q_h^0)\nabla u_h^0\right)\, dx\\
			&- \Delta t \sum_{n=0}^{N-1}\!\!\int_\Omega \!\left(\varepsilon_1\nabla u^{n+1/2}_h\cdot D_t^+\nabla \tilde g_h^n + \frac{\varepsilon_2}{2} (\T_R(Q^{n+1}_h)\nabla u^{n+1}_h + \T_R(Q_h^n)\nabla u_h^{n})\cdot D_t^+\nabla\tilde g^n_h + \varepsilon_3\operatorname{div}(Q_h^{n+1/2})\cdot D_t^+\nabla \tilde g_h^n\right)\, dx\\
			&- \int_\Omega (\varepsilon_1 \nabla u_h^{0}\cdot\nabla\tilde g_h^{0} + \varepsilon_2 (\nabla \tilde g_h^{0})^\top \T_R(Q_h^0)\nabla u_h^{0} + \varepsilon_3\operatorname{div}(Q_h^{0})\cdot\nabla\tilde g_h^{0})\, dx\\
			&+ \int_\Omega (\varepsilon_1 \nabla u_h^{N}\cdot\nabla\tilde g_h^{N} + \varepsilon_2 (\nabla \tilde g_h^{N})^\top \T_R(Q_h^N)\nabla u_h^{N} + \varepsilon_3\operatorname{div}(Q_h^{N})\cdot\nabla\tilde g_h^{N})\, dx\\
			&:= E_0 + A + B + D.
		\end{split}
	\end{equation}
	To bound $A$, we apply the Cauchy-Schwarz inequality then Young's inequality to obtain
	\begin{align*}
		|A| &\le \Delta t  \sum_{n=0}^{N-1}\left(\varepsilon_1\|\nabla u_h^{n+1/2}\|_{L^2} + \frac{|\varepsilon_2|}{2}\|\mathcal{T}_R(Q_h^{n+1})\nabla u^{n+1}_h\|_{L^2} + \frac{|\varepsilon_2|}{2}\|\mathcal{T}_R(Q_h^{n})\nabla u^{n}_h\|_{L^2} \right)\|D_t^+\nabla \tilde g_h^n\|_{L^2}\\
		&\hspace{3ex}+ \Delta t  \sum_{n=0}^{N-1}|\varepsilon_3|\|\operatorname{div}Q_h^{n+1/2}\|_{L^2}\|D_t^+\nabla \tilde g_h^n\|_{L^2}\\
		&\le \frac{\Delta t  (\varepsilon_1 + |\varepsilon_2| + |\varepsilon_3|)}{2} \sum_{n=0}^{N-1} \|D_t^+\nabla\tilde g^n_h\|_{L^2}^2 + \frac{\Delta t \varepsilon_1}{2}\sum_{n=0}^{N-1} \|\nabla u_h^{n+1/2}\|_{L^2}^2\\
		&\hspace{3ex}+ \frac{\Delta t |\varepsilon_2|}{4} \sum_{n=0}^{N-1} \|\mathcal{T}_R(Q^{n+1}_h)\nabla u^{n+1}_h\|_{L^2}^2 + \frac{\Delta t |\varepsilon_2|}{4} \sum_{n=0}^{N-1} \|\mathcal{T}_R(Q_h^{n})\nabla u^{n}_h\|_{L^2}^2 + \frac{\Delta t |\varepsilon_3|}{2}\sum_{n=0}^{N-1} \|\operatorname{div}Q^{n+1/2}_h\|_{L^2}^2.
	\end{align*}
	Then applying the triangle inequality and Young's inequality, we have
	\begin{equation}
			\frac{\Delta t \varepsilon_1}{2}\sum_{n=0}^{N-1} \|\nabla u_h^{n+1/2}\|_{L^2}^2 
			 \le \frac{\Delta t \varepsilon_1}{2}\sum_{n=0}^{N} \|\nabla u^n_h\|_{L^2}^2,\quad \text{and}\quad \frac{\Delta t |\varepsilon_3|}{2}\sum_{n=0}^{N-1} \|\operatorname{div}Q^{n+1/2}_h\|_{L^2}^2 
			 \le \frac{d\Delta t  |\varepsilon_3|}{2} \sum_{n=0}^N \|\nabla Q_h^n\|_{L^2}^2.
	\end{equation}
	We also have
	\begin{equation}
		\frac{\Delta t |\varepsilon_2|}{4} \sum_{n=0}^{N-1}\left( \|\mathcal{T}_R(Q_h^{n+1})\nabla u_h^{n+1}\|_{L^2}^2  + \|\mathcal{T}_R(Q_h^{n})\nabla u^{n}_h\|_{L^2}^2  \right)\le \frac{\Delta t |\varepsilon_2|R^2}{4}\sum_{n=0}^{N-1}\left( \|\nabla u_h^{n+1}\|_{L^2}^2+\|\nabla u_h^{n}\|_{L^2}^2\right).
	\end{equation}
	Combining these estimates for $A$, we obtain
	\begin{equation}
		\begin{split}
			|A| &\le \frac{\Delta t  (\varepsilon_1+|\varepsilon_2|+|\varepsilon_3|)}{2}\sum_{n=0}^{N-1}\|D_t^+\nabla\tilde g^n_h\|_{L^2}^2 + \frac{\Delta t (\varepsilon_1+R^2|\varepsilon_2|)}{2}\sum_{n=0}^N\|\nabla u^n_h\|_{L^2}^2 + \frac{d\Delta t |\varepsilon_3|}{2}\sum_{n=0}^N \|\nabla Q^n_h\|_{L^2}^2.
		\end{split}
	\end{equation}
	The term $B$ only depends on the initial conditions, so we leave it as it is. We continue to term $D$. By the Cauchy-Schwarz inequality and Young's inequality, we have
	\[
	\left|\int_\Omega \varepsilon_1 \nabla u_h^N\cdot\nabla\tilde g_h^N\, dx\right| \le \frac{\varepsilon_1}{2}\left(\delta_1\|\nabla u_h^N\|_{L^2}^2 + \frac{1}{\delta_1}\|\nabla \tilde g_h^N\|_{L^2}^2\right).
	\]
	Also,
	\begin{equation*}
		\left|\int_\Omega \varepsilon_2 (\nabla \tilde g^N_h)^\top \T_R(Q_h^N)\nabla u_h^N \, dx\right| 
		\le \frac{|\varepsilon_2|}{2}(\delta_2R^2\|\nabla u_h^N\|_{L^2}^2 + \frac{1}{\delta_2}\|\nabla \tilde g^N_h\|_{L^2}^2).
	\end{equation*}
	Now by the Cauchy-Schwarz inequality and Young's inequality, we have
	\begin{equation*}
		\left|\int_\Omega \varepsilon_3 \operatorname{div}(Q^N_h)\cdot\nabla\tilde g_h^N\, dx\right|
		\le \frac{\sqrt{d}|\varepsilon_3|}{2}\left(\delta_3\|\nabla Q^N_h\|_{L^2}^2 + \frac{1}{\delta_3}\|\nabla \tilde g^N_h\|_{L^2}^2\right).
	\end{equation*}
	Now combining all of these bounds and noting that $\lvert(\nabla u^N_h)^\top \T_R(Q^N_h)\nabla u_h^N\rvert \le R\lvert\nabla u_h^N\rvert_2^2$, we have
	\begin{equation}
		\label{eq:L2_bound_on_derivative_Qn}
		\begin{split}
			&\Delta t \sum_{n=0}^{N-1}\int_\Omega \lvert D_t^+ Q_h^n\rvert^2\, dx + \int_\Omega \mathcal{F}_B(Q_h^N)\, dx + \frac{1}{2}\left(L - d\Delta t |\varepsilon_3| - \sqrt{d}|\varepsilon_3|\delta_3\right)\int_\Omega \lvert \nabla Q_h^N\rvert^2\, dx\\
			&\hspace{10ex}+ \frac{1}{2}\left(\varepsilon_1 - |\varepsilon_2|R - \Delta t \varepsilon_1 - \Delta t |\varepsilon_2|R^2 - \varepsilon_1\delta_1 - |\varepsilon_2|\delta_2 R^2\right) \int_\Omega\lvert \nabla u^N_h\rvert^2\, dx\\
			&\le \left(\frac{\Delta t \varepsilon_1}{2} + \frac{\Delta t |\varepsilon_2|R^2}{2}\right)\sum_{n=0}^{N-1} \|\nabla u_h^n\|_{L^2}^2 + \frac{d\Delta t |\varepsilon_3|}{2}\sum_{n=0}^{N-1}\|\nabla Q_h^n\|_{L^2}^2\\
			&\hspace{10ex}+ \int_\Omega \left(\frac{L}{2}\lvert \nabla Q^0_h\rvert^2 + \mathcal{F}_B(Q^0_h) + \frac{\varepsilon_1}{2}\lvert\nabla u^0_h\rvert^2 + \frac{|\varepsilon_2|R}{2}\lvert \nabla u^0_h\rvert^2\right)\, dx\\
			&\hspace{10ex}+ \frac{\Delta t  (\varepsilon_1+|\varepsilon_2|+|\varepsilon_3|)}{2}\sum_{n=0}^{N-1}\|D_t^+ \nabla \tilde g^n_h\|_{L^2}^2\\
			&\hspace{10ex}+ \left\lvert \int_\Omega \left(\varepsilon_1\nabla u^0_h\cdot\nabla\tilde g^0_h + \varepsilon_2 (\nabla\tilde g_h^0)^\top \T_R(Q^0)\nabla u^0_h + \varepsilon_2 \operatorname{div}(Q^0_h)\cdot\nabla \tilde g^0_h\right)\, dx \right\rvert\\
			&\hspace{10ex}+ \frac{\varepsilon_1}{2\delta_1}\|\nabla\tilde g_h^N\|_{L^2}^2 + \frac{|\varepsilon_2|}{2\delta_2}\|\nabla\tilde g^N_h\|_{L^2}^2 + \frac{\sqrt{d} |\varepsilon_3|}{2\delta_3}\|\nabla \tilde g^N_h\|_{L^2}^2.
		\end{split}
	\end{equation}
		Now take $\delta_1 = \min\left\{\frac12, \frac{\varepsilon_1 - |\varepsilon_2|R}{6\varepsilon_1}\right\}$, $\delta_2 = \min\left\{\frac12, \frac{\varepsilon_1 - |\varepsilon_2|R}{6R^2|\varepsilon_2|}\right\}$, and 
	\[
	\delta_3 = \begin{cases} 
		L / (2\sqrt{d}|\varepsilon_3|) &\text{if } \varepsilon_3\ne 0,\\
		1 &\text{if } \varepsilon_3 = 0,
	\end{cases}
	\]
	and recall that we assume $\Delta t < \min\left\{\frac{L}{2d|\varepsilon_3|}, \frac{\varepsilon_1 - |\varepsilon_2|R}{3(\varepsilon_1 + |\varepsilon_2|R^2)}\right\}$. Note that with this choice, we have that the coefficient of $\| \nabla Q^N_h\|_{L^2}^2$ in the above expression is greater than 0.
	Similarly, the coefficient of $\|\nabla u^N_h\|_{L^2}^2$ in the above expression is greater than 0.
	Also note that we have $\delta_1^{-1} = \max\left\{2, \frac{6\varepsilon_1}{\varepsilon_1 - |\varepsilon_2|R}\right\}$ and $\delta_2^{-1} = \max\left\{2, \frac{6R^2|\varepsilon_2|}{\varepsilon_1 - |\varepsilon_2|R}\right\}$. Thus, we have an estimate of the form
	\[
	V(t^N) \le A(t^N) + \Delta t\sum_{n=0}^{N-1} B(t^n)V(t^n),
	\]
	where
	\[
	V(t^n) = \int_\Omega \mathcal{F}_B(Q^n_h)\, dx + \frac{1}{4}\left(L - 2d\Delta t |\varepsilon_3|\right)\|\nabla Q^n_h\|_{L_2}^2 + \frac{1}{6}\left(\varepsilon_1 - |\varepsilon_2|R\right)\|\nabla u_h^n\|_{L^2}^2,
	\]
	and
	\[
	B(t^n) = \max\left\{\frac{3(\varepsilon_1 + |\varepsilon_2|R^2)}{\varepsilon_1 - |\varepsilon_2|R}, \frac{2d|\varepsilon_3|}{L-2d\Delta t|\varepsilon_3|}\right\},
	\]
	and
	\begin{align*}
		A(t^N) &= \int_\Omega \left(\frac{L}{2}\lvert \nabla Q^0_h\rvert^2 + \mathcal{F}_B(Q_h^0) + \frac{\varepsilon_1}{2}\lvert\nabla u_h^0\rvert^2 + \frac{|\varepsilon_2|R}{2}\lvert \nabla u_h^0\rvert^2\right)\, dx\\
		&\hspace{10ex}+ \frac{\Delta t  (\varepsilon_1+|\varepsilon_2|+|\varepsilon_3|)}{2}\sum_{n=0}^{N-1}\|D_t^+ \nabla \tilde g^n_h\|_{L^2}^2\\
		&\hspace{10ex}+ \left\lvert \int_\Omega \left(\varepsilon_1\nabla u^0_h\cdot\nabla\tilde g_h^0 + \varepsilon_2 (\nabla\tilde g^0_h)^\top \T_R(Q_h^0)\nabla u^0_h + \varepsilon_2 \operatorname{div}(Q^0_h)\cdot\nabla \tilde g^0_h\right)\, dx \right\rvert\\
		&\hspace{10ex}+\left( \varepsilon_1 + \frac{3\varepsilon_1^2 + 3R^2|\varepsilon_2|^2}{\varepsilon_1 - |\varepsilon_2|R} + |\varepsilon_2| + \frac{d|\varepsilon_3|^2}{L} \right)\|\nabla\tilde g^N_h\|_{L^2}^2 . 
	\end{align*}
	Using the discrete Gr\"{o}nwall inequality, e.g.~\cite{Holte2009},
	\[
	V(t^N) \le A(t^N) + \Delta t\sum_{n=0}^{N-1} A(t^n)B(t^n)\operatorname{exp}\left(\Delta t \sum_{k=n+1}^{N-1} B(t^k)\right),
	\]
	we obtain
	\begin{align*}
		&\int_\Omega \mathcal{F}_B(Q^N_h)\, dx + \frac14\left(L - 2d\Delta t |\varepsilon_3|\right)\|\nabla Q_h^N\|_{L_2}^2 + \frac{1}{6}\left(\varepsilon_1 - |\varepsilon_2|R\right)\|\nabla u_h^N\|_{L^2}^2\\
		&\le A(t^N) + \Delta t\max\left\{\frac{3(\varepsilon_1 + |\varepsilon_2|R^2)}{\varepsilon_1 - |\varepsilon_2|R}, \frac{2d|\varepsilon_3|}{L-2d\Delta t|\varepsilon_3|}\right\}\\
		&\hspace{15ex}\times\sum_{n=0}^{N-1}A(t^n)\operatorname{exp}\left(\max\left\{\frac{3(\varepsilon_1 + |\varepsilon_2|R^2)}{\varepsilon_1 - |\varepsilon_2|R}, \frac{2d|\varepsilon_3|}{L-2d\Delta t|\varepsilon_3|}\right\}(N-1-n)\Delta t\right)\\
		&\le A(t^N) + C\exp(Ct^N)\Delta t\sum_{n=0}^{N-1} A(t^n).
	\end{align*}
	Thus we obtain the estimate
	\begin{align*}
		&\int_\Omega \mathcal{F}_B(Q^N_h)\, dx + \frac14\left(L - 2d\Delta t |\varepsilon_3|\right)\|\nabla Q_h^N\|_{L_2}^2 + \frac{1}{6}\left(\varepsilon_1 - |\varepsilon_2|R\right)\|\nabla u_h^N\|_{L^2}^2\\
		&\le C(1+t^N\operatorname{exp}(Ct^N))\left( \|\nabla Q_h^0\|_{L^2}^2 + \mathcal{F}_B(Q^0_h) + (1+R)\|\nabla u^0_h\|_{L^2}^2 + \|\nabla \tilde g^0\|_{L^2}^2\right)\\ 
		&+ C(1 + t^N\operatorname{exp}(Ct^N))\Delta t \sum_{n=0}^{N-1} \|D_t^+ \nabla\tilde g^n_h\|_{L^2}^2 + C\|\nabla\tilde g_h^N\|_{L^2}^2 + C\operatorname{exp}(Ct^N)\Delta t\sum_{n=0}^{N-1} \|\nabla \tilde g_h^n\|_{L^2}^2,
	\end{align*}
	which proves the result.
\end{proof}

\bibliographystyle{abbrv}
\bibliography{bibliography}
\end{document}